\newcounter{TmpEnumi}
\numberwithin{equation}{section}
\def\today{\number\day\space\ifcase\month\or
 January\or February\or
   March\or April\or May\or June\or
    July\or August\or September\or
   October\or November\or December\fi\
     \number\year}
\theoremstyle{definition}
\newtheorem{thm}{Theorem}[section]
\newtheorem{lem}[thm]{Lemma}
\newtheorem{prp}[thm]{Proposition}
\newtheorem{dfn}[thm]{Definition}
\newtheorem{cor}[thm]{Corollary}
\newtheorem{ctn}[thm]{Construction}
\newtheorem{rmk}[thm]{Remark}
\newtheorem{ntn}[thm]{Notation}
\newtheorem{exa}[thm]{Example}
\newtheorem{qst}[thm]{Question}
\newcommand{\beq}{\begin{equation}}
\newcommand{\eeq}{\end{equation}}
\newcommand{\beqr}{\begin{eqnarray*}}
\newcommand{\eeqr}{\end{eqnarray*}}
\newcommand{\bal}{\begin{align*}}
\newcommand{\eal}{\end{align*}}
\newcommand{\bei}{\begin{itemize}}
\newcommand{\eei}{\end{itemize}}
\newcommand{\limi}[1]{\lim_{{#1} \to \infty}}
\newcommand{\af}{\alpha}
\newcommand{\dt}{\delta}
\newcommand{\ep}{\varepsilon}
\newcommand{\et}{\eta}
\newcommand{\ld}{\lambda}
\newcommand{\Z}{{\mathbb{Z}}}
\newcommand{\N}{{\mathbb{Z}}_{> 0}}
\newcommand{\Nz}{{\mathbb{Z}}_{\geq 0}}
\newcommand{\Cu}{{\operatorname{Cu}}}
\newcommand{\id}{{\operatorname{id}}}
\newcommand{\ev}{{\operatorname{ev}}}
\newcommand{\spec}{{\operatorname{sp}}}
\newcommand{\supp}{{\operatorname{supp}}}
\newcommand{\rank}{{\operatorname{rank}}}
\newcommand{\card}{{\operatorname{card}}}
\newcommand{\Aut}{{\operatorname{Aut}}}
\newcommand{\Ad}{{\operatorname{Ad}}}
\newcommand{\QT}{{\operatorname{QT}}}
\newcommand{\T}{{\operatorname{T}}}
\newcommand{\W}{{\operatorname{W}}}
\newcommand{\tr}{{\operatorname{tr}}}
\newcommand{\rc}{{\operatorname{rc}}}
\newcommand{\U}{{\operatorname{U}}}
\newcommand{\cH}{{\mathcal{H}}}
\newcommand{\cK}{{\mathcal{K}}}
\newcommand{\dirlim}{\varinjlim}
\newcommand{\Mi}{M_{\infty}}
\newcommand{\andeqn}{\qquad {\mbox{and}} \qquad}
\newcommand{\Wolog}{Without loss of generality}
\newcommand{\tfae}{the following are equivalent}
\newcommand{\ca}{C*-algebra}
\newcommand{\hm}{homomorphism}
\newcommand{\hsa}{hereditary C*-subalgebra}
\newcommand{\pj}{projection}
\newcommand{\mvnt}{Murray-von Neumann equivalent}
\newcommand{\CGAa}{C^* (G, A, \af)}
\newcommand{\cfn}{continuous function}
\renewcommand{\S}{\subset}
\newcommand{\I}{\infty}
\newcommand{\Lem}[1]{Lemma~\ref{#1}}
\newcommand{\Def}[1]{Definition~\ref{#1}}
\newcommand{\Thm}[1]{Theorem~\ref{#1}}
\newcommand{\Ntn}[1]{Notation~\ref{#1}}
\definecolor{RED}{rgb}{1,0,0}\definecolor{BLUE}{rgb}{0,0,1} 
\lstdefinelanguage{DIFcode}{ 
  moredelim=[il][\color{red}\sout]{\%DIF\ <\ }, 
  moredelim=[il][\color{blue}\uwave]{\%DIF\ >\ } 
} 
\lstdefinestyle{DIFverbatimstyle}{ 
	language=DIFcode, 
	basicstyle=\ttfamily, 
	columns=fullflexible, 
	keepspaces=true 
} 
\title[Radius of comparison of the crossed product]{
The radius of comparison of the crossed product by a tracially strictly approximately inner  action.}  
\author{M. Ali Asadi-Vasfi}
\curraddr{School of Mathematics, Statistics and Computer Science,
College of Science, University of Tehran, Tehran, Iran.}
\email[]{Asadi.ali@ut.ac.ir}
\date{\today}
\subjclass[2010]{Primary 46L55;
 Secondary 19K14; 46L80.}
\begin{document}
\maketitle
\begin{center}
Dedicated to N. Christopher Phillips on the occasion of his 64th birthday.
\end{center}
\begin{abstract}
Let $G$ be a finite group,
let $A$ be an  infinite-dimensional stably finite simple unital C*-algebra,
and let $\alpha \colon G \to \Aut (A)$
be a tracially strictly approximately  inner action of $G$ on $A$.
Then the radius of comparison satisfies
$\rc (A) \leq   \rc \big( \CGAa \big)$ and if $C^*(G, A, \alpha)$ is simple, then
$\rc (A) \leq   \rc \big( \CGAa \big) \leq \rc (A^{\alpha})$.
Further, 
the inclusion of $A$ in~$C^*(G, A, \alpha)$
 induces an isomorphism from the purely positive part of the
 Cuntz semigroup $\Cu (A)$ to its image in $\Cu (\CGAa)$.
 If $\alpha$ is  strictly approximately inner,
 then in fact $\Cu (A) \to \Cu (\CGAa)$ is an ordered semigroup isomorphism onto
its range. Also, for every finite group~$G$ and for every  $\eta \in \left(0, \frac{1}{\card (G)}\right)$,
we construct a simple separable unital AH~algebra $A$ with stable rank one 
and a strictly approximately  inner action $\alpha \colon G \to \Aut (A)$  such that:
\begin{enumerate}
\item
$\alpha$ is pointwise outer and doesn't have the weak tracial Rokhlin property.
\item 
$\rc (A) =\rc \left(C^*(G, A, \alpha)\right)= \eta$.
\end{enumerate}
\end{abstract}

\tableofcontents

\section{Introduction}\label{Sec_Intro}
Comparison theory of projections plays an important role in the type classification of factors.
A C*-algebra might have few or no projections. 
In this case comparison theory of projections may say 
nothing about the structure of the C*-algebra. 
The appropriate substitute for projections is positive elements.
This idea was first introduced by Cuntz in \cite{Cun78} for the purpose  of studying dimension functions on simple C*-algebras.
Later, the radius of comparison of C*-algebras, based on the Cuntz semigroup,
was introduced by Andrew S.~Toms
in Section~6 of~\cite{Tom06} to study  exotic examples of simple amenable
C*-algebras that are not $\mathcal{Z}$-absorbing.
In the commutative setting, it
is well known that the radius of comparison of $C(X)$ is bounded above 
by one half the covering dimension of $X$ \cite{BRTTW12, EN13}. Also, the comparison theory can be viewed as a non-commutative dimension
theory \cite{AA20}.
Remarkable progress has been achieved on the comparison theory
in \cite{BRTTW12, HP19, Ni14, Ph16, Ph14}. 
In this paper, we consider the relation between comparison
theory in a simple C*-algebra $A$ and in the crossed product of $A$ by
a finite group, under a tracially strictly approximately inner action.

The  Cuntz semigroup is a key ingredient in the Elliott program for the classification of C*-algebras \cite{ET08, Tom08}.
We refer to~\cite{APT11, TT15} for many aspects of the Cuntz semigroup.
It is generally large and complicated.
Among simple nuclear \ca{s},
the classifiable ones are those whose Cuntz semigroups
are easily accessible (Section~5 of \cite{APT11}).
With the near completion of the Elliott program,
nonclassifiable \ca{s} attract more attention (see \cite{AGP19, HP19, Ph16})
and the Cuntz semigroup is the main additional available invariant.
The goal of this paper
is to conduct an investigation  beyond classifiable C*-algebras by
considering crossed products by finite groups of simple C*-algebras which may not have strict comparison of positive elements, 
although there are still many pieces of the puzzle that have to be put together.

Section~{\ref{Sec_Prelim}} is devoted to
 providing a quick overview of the background we need here. In Section~\ref{Sec_Approx_Innner} and Section~\ref{Sec_Tracial_Approx_Innner}, we give a standard definition for 
a strictly approximately inner action of a finite group on a unital C*-algebra
and its tracial analog, tracial strict approximate innerness.
 (See Definition~\ref{D_9719_StrictAppInn} and Definition~\ref{D_9731_TrStrAppInn}.)
To provide these definitions, we basically  give 
a generalization of the Definition~1.3 of \cite{Ph15}
and 
a  generalization of Definition~3.2 of~\cite{Ph11}
to nonabelian finite groups and not necessarily separable C*-algebras, and then omit some of their assumptions. 
(See Definition~\ref{D_9719_AppRep_Phi} and Definition~\ref{D_9731_NonAbTrAppRep}.)
Clearly, in the setting of abelian groups and  simple  separable unital C*-algebras,
approximate representability in the sense of Phillips (Definition~3.2 of~\cite{Ph11}),
approximate representability in the sense of Izumi (Definition~3.6(2) of \cite{Iz1}),
approximate representability in the sense of Osaka and Teruya (Definition~3.4 of \cite{LO19}),
and
approximate representability in our sense (Definition~\ref{D_9719_AppRep_Phi}) are the same.
There is no apparent reason to
believe that strict approximate innerness implies approximate
representability. 
No examples seem to be known.

Also, we prove that if $G$ is a finite group, $A$ is an infinite-dimensional simple  unital stably finite C*-algebra,
and $\alpha \colon  G \to \Aut(A)$ be a tracially strictly  approximately  inner action, then 
the radii of comparison of $A$ and 
the crossed product are related by
\[
\rc (A) \leq \rc \big( \CGAa\big).
\]
Getting the reverse inequality is likely to be very difficult.
When we further assume that $C^*(G, A, \alpha)$ is simple, then
the radii of comparison of $A$, the crossed product, and the fixed point algebra
are related by
\[
\rc (A) \leq \rc \big(C^*(G, A, \alpha)\big) \leq \rc (A^{\alpha}).
\]   
In fact,
we prove a much stronger result,
relating the Cuntz semigroups
(see Theorem~\ref{WC_injectivity} and Theorem~\ref{WC_plus_injectivity}).
We show that
the inclusion of $A$ in~$C^*(G, A, \alpha)$
induces an isomorphism from $\Cu (A)$  to its range in $\Cu (C^*(G, A, \alpha))$
if $\alpha$ is a strictly approximate inner action of a finite group $G$ on a unital C*-algebra $A$.
It induces an isomorphism from  $\Cu_+ (A) \cup \{0\}$ to its range 
if $\alpha$ is a tracially  strictly approximately inner action of a finite group $G$ 
on an infinite-dimensional simple  unital stably finite C*-algebra $A$.
   
Suppose further that $G$ is abelian, $\alpha$ is tracially approximately representable, and $C^*(G, A, \alpha)$ is simple. 
Let $\widehat{\alpha}$ be the dual action of $\widehat{G}$ on $C^*(G, A, \alpha)$. 
Then $\widehat{\alpha}$  has the tracial Rokhlin property by Theorem~3.11 of \cite{Ph11}. Therefore our results are immediate from 
Takai duality \cite{Tak75} and the main results in \cite{AGP19}.
But the hypothesis in our results is only that the action is tracially
strictly approximately inner.  
Also, the group in our work need not to be abelian. 
Thus, our results generalize the tracial Rokhlin property case
of \cite{AGP19}.
Note that the right version of the Rokhlin property for
actions of finite dimensional quantum groups was given in \cite{GKL19}
under the name ``spatial Rokhlin property''
and the tracial version of spatial Rokhlin property has not been defined yet. 
Also, even if we had the result of \cite{AGP19} for duals of finite dimensional quantum groups,
Theorem~4.12 of \cite{BSV17} would only give the current result assuming approximate representability.
 It is because of the fact that our actual hypotheses are weaker.
 Namely, strict approximate innerness is weaker than approximate representability and 
 our results cover also the tracial version of approximate representability.

We also prove in Proposition~\ref{Pr.In.Ro} that actions of finite groups on
stably finite unital C*-algebras $A$ with $0<\rc(A)<\infty$
 cannot simultaneously have the Rokhlin property and be strictly approximately inner.
We further prove in Proposition~\ref{Pr.Tracial.In.Ro} that  actions of finite groups on many  nonclassifiable simple  unital C*-algebras cannot simultaneously
 have the weak tracial Rokhlin property  and be tracially strictly approximately inner.  
Proposition~\ref{Pr.In.Ro} and Proposition~\ref{Pr.Tracial.In.Ro} fail when the C*-algebras have strict comparison of positive elements.
(See Example~\ref{Count.Examp}.)

Finally, in Section~\ref{Sec_Example}, for every finite group~$G$ and for every  $\eta \in \left(0, \frac{1}{\card (G)}\right)$, 
we construct a simple separable unital AH~algebra $A$ with stable rank one 
and an action $\alpha \colon G \to \Aut (A)$  such that:
\begin{enumerate}
\item
$\alpha$ is pointwise outer, strictly approximately  inner, and doesn't have the weak tracial Rokhlin property.
\item 
$\rc (A) =\rc \left(C^*(G, A, \alpha)\right)= \eta$.
\end{enumerate}
\subsection*{Acknowledgments}
Some parts of
this work were carried out during the research visit of the author to the University of Oregon 
from March 2018 to September 2019.
He is thankful to that institution for its hospitality with special thanks to Ben Elias and Sherilyn Schwartz.
The author would like to thank N. Christopher Phillips
for a number of  productive discussions and feedback. 
He is also grateful to Lawrence G. Brown for answering a question about an appropriate choice of notation,
Eusebio Gardella for pointing out the reference \cite{G20},
Nasser Golestani for pointing out Lemma~2.3 of \cite{HO13},
 Ilan Hirshberg for pointing out Example~5.11 of \cite{Iz1},
 Mahdi Moradi for his comment on the proof of Lemma~\ref{Approx_Commutnat}, 
Hiroyuki  Osaka for pointing out Proposition~3.17 of \cite{LO19}, Gábor Szabó for pointing out the reference \cite{BSV17},
and Hannes Thiel for point out Proposition~2.8 of \cite{Th20}.
\section{Preliminaries }\label{Sec_Prelim}
In this preliminary  section,
we collect
some information on the Cuntz semigroup, quasitraces,
and the radius of comparison  for easy reference and the convenience of the reader.
\begin{ntn}
Throughout, 
if $A$ is a \ca, or if $A = M_{\infty} (B)$
for a C*-algebra~$B$, we write $A_{+}$ for the
set of positive elements of $A$ and write $\U (A)$ for its
unitary group.
For $a \in A_+$, we denote by $(a -\ep)_+$ the function $\max(0,t-\ep)$ on the spectrum of $a$.
 Also, we denote by $\cK$ the algebra of compact operators on a separable
and infinite-dimensional Hilbert space $\cH$.
\end{ntn}
Part (\ref{Cuntz_def_property_a})
of the following definition is originally from~\cite{Cun78}
and Part (\ref{Cuntz_def_property_b}) is from~\cite{APT11}.
\begin{dfn}\label{Cuntz_def_property}
Let $A$ be a \ca.
\begin{enumerate}
\item\label{Cuntz_def_property_a}
For any $a, b \in M_{\infty} (A)_{+}$,
we say that $a$ is {\emph{Cuntz subequivalent to~$b$ in~$A$}},
written $a \precsim_{A} b$,
if there is a sequence $(c_n)_{n = 1}^{\infty}$ in $M_{\infty} (A)$
such that
\[
\limi{n} c_n b c_n^* = a.
\] 
If $a \precsim_{A} b$ and $b \precsim_{A} a$, 
we say that $a$ and $b$ are {\emph{Cuntz equivalent in~$A$}} and write $a \sim_{A} b$.
This relation is an equivalence relation,
and we write $\langle a \rangle_A$ for the equivalence class of~$a$.
We define $\W (A) = M_{\infty} (A)_{+} / \sim_A$,
together with the commutative semigroup operation
$\langle a \rangle_A + \langle b \rangle_A
 = \langle a \oplus b \rangle_A$
and the partial order
$\langle a \rangle_A \leq \langle b \rangle_A$
if $a \precsim_{A} b$.
We write $0$ for~$\langle 0 \rangle_A$.
Further, 
we take $\Cu (A) = \W (\cK \otimes A)$.
We write the classes as $\langle a \rangle_A$
for $a \in (\cK \otimes A)_{+}$.
\item\label{Cuntz_def_property_b}
Let $A$ and $B$ be C*-algebras
and let $\psi \colon A \to B$ be a \hm.
We use the same letter for the induced maps
$M_n (A) \to M_n (B)$
for $n \in \N$, 
$\Mi (A) \to \Mi (B)$, and 
$\cK \otimes A\to \cK \otimes B$.
We define
$\W (\psi) \colon \W (A) \to \W (B)$
and $\Cu (\psi) \colon \Cu (A) \to \Cu (B)$
by $\langle a \rangle_A \mapsto \langle \psi (a) \rangle_B$
for $a \in M_{\infty} (A)_{+}$
or $a \in (\cK \otimes A)_{+}$ as appropriate.
\end{enumerate}
\end{dfn}
The original version $\W(A)$ of the Cuntz semigroup  has some flaws
 which are fixed by considering $\Cu (A)$, although both invariants are closely related. 
 Nevertheless, the radius of comparison  is easier to deal with in terms
of $\W(A)$. 
Also, the usual notation for Cuntz subequivalence is $a \precsim b$.
Since we need to use Cuntz subequivalence with respect to different C*-algebras,
we include $A$ in the notation.

The following lemma is taken from~\cite{KR00, Ph14, Ror92}.
\begin{lem}\label{PhiB.Lem_18_4}
Let $A$ be a \ca.
\begin{enumerate}
\item\label{PhiB.Lem_18_4_10}
Let $a, b \in A_{+}$ and let $\eta > 0$.
If $\| a - b \| < \eta$, then:
\begin{enumerate}
\item\label{PhiB.Lem_18_4_10.a}
$(a - \eta)_{+} \precsim_A b$.
\item\label{Item_9420_LgSb_1_6}
For any $\ld > 0$,
we have $(a - \ld - \eta)_{+} \precsim_A (b - \ld)_{+}$.
\end{enumerate}
\item\label{PhiB.Lem_18_4_8}
Let $a \in A_{+}$ and let $\eta_1, \eta_2 > 0$.
Then
\[
\big( ( a - \eta_1)_{+} - \eta_2 \big)_{+}
 = \big( a - ( \eta_1 + \eta_2 ) \big)_{+}.
\]
\item\label{PhiB.Lem_18_4_11}
Let $a, b \in A_{+}$.
Then \tfae:
\begin{enumerate}
\item\label{PhiB.Lem_18_4_11.a}
$a \precsim_A b$.
\item\label{PhiB.Lem_18_4_11.b}
$(a - \eta)_{+} \precsim_A b$ for all $\eta > 0$.
\item\label{PhiB.Lem_18_4_11.c}
For every $\eta > 0$, there is $\dt > 0$ such that
$(a - \eta)_{+} \precsim_A (b - \dt)_{+}$.
\end{enumerate}
\end{enumerate}
\end{lem}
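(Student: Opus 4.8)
The plan is to prove the parts in a convenient order, using throughout the standard toolkit for Cuntz comparison (all in~\cite{APT11, Ror92}): that $0 \le x \le y$ in $\Mi(A)_{+}$ forces $x \precsim_A y$; that $c x c^* \precsim_A x$ for $c \in \Mi(A)$ and $x \in \Mi(A)_{+}$; that $\langle f(x) \rangle_A = \langle x \rangle_A$ whenever $f \colon [0, \|x\|] \to [0, \infty)$ is continuous with $f^{-1}(\{0\}) = \{0\}$, in particular $x \sim_A x^{2}$ and $x \sim_A \ld x$ for $\ld > 0$; that $z z^* \sim_A z^* z$; and transitivity of $\precsim_A$.

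I would begin with~(\ref{PhiB.Lem_18_4_8}), which has no operator-algebraic content: both sides are $g(a)$ for a single continuous function $g \colon [0, \infty) \to [0, \infty)$, because $\big( (t - \eta_1)_{+} - \eta_2 \big)_{+} = \big( t - (\eta_1 + \eta_2) \big)_{+}$ for all $t \ge 0$ (check the three ranges $t \le \eta_1$, $\eta_1 < t \le \eta_1 + \eta_2$, $t > \eta_1 + \eta_2$). The heart of the lemma is~(\ref{PhiB.Lem_18_4_10.a}). Put $e = (a - \eta)_{+}$ (the case $e = 0$ being trivial); from $\| a - b \| < \eta$ one has $b \ge a - \eta \cdot 1$ in the unitization, hence $e^{1/2} b e^{1/2} \ge e^{1/2} (a - \eta \cdot 1) e^{1/2} = e^{2}$, the last equality being the one-variable identity $(t - \eta)_{+}(t - \eta) = (t - \eta)_{+}^{2}$. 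Setting $z = e^{1/2} b^{1/2}$ gives $e^{2} \le z z^*$, so $e^{2} \precsim_A z z^* \sim_A z^* z = b^{1/2} e b^{1/2} \precsim_A b$ (the last step since $b^{1/2} e b^{1/2} \le \|e\|\, b \sim_A b$); as $e \sim_A e^{2}$, this is $(a - \eta)_{+} \precsim_A b$.

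With~(\ref{PhiB.Lem_18_4_10.a}) and~(\ref{PhiB.Lem_18_4_8}) available the remaining parts are routine bookkeeping. For~(\ref{Item_9420_LgSb_1_6}): $s \mapsto (s - \ld)_{+}$ is $1$-Lipschitz, so $\| (a - \ld)_{+} - (b - \ld)_{+} \| \le \| a - b \| < \eta$, and applying~(\ref{PhiB.Lem_18_4_10.a}) to $(a - \ld)_{+}$ and $(b - \ld)_{+}$, then rewriting the left side by~(\ref{PhiB.Lem_18_4_8}), yields $(a - \ld - \eta)_{+} \precsim_A (b - \ld)_{+}$. For~(\ref{PhiB.Lem_18_4_11}): (\ref{PhiB.Lem_18_4_11.a})$\Rightarrow$(\ref{PhiB.Lem_18_4_11.b}) holds since $(a - \eta)_{+} \le a$ and $\precsim_A$ is transitive; (\ref{PhiB.Lem_18_4_11.b})$\Rightarrow$(\ref{PhiB.Lem_18_4_11.a}) by choosing, for each $n$, some $c_n \in \Mi(A)$ with $\| c_n b c_n^* - (a - \tfrac1n)_{+} \| < \tfrac1n$ and noting that then $c_n b c_n^* \to a$; (\ref{PhiB.Lem_18_4_11.c})$\Rightarrow$(\ref{PhiB.Lem_18_4_11.b}) since $(b - \dt)_{+} \le b$; and (\ref{PhiB.Lem_18_4_11.b})$\Rightarrow$(\ref{PhiB.Lem_18_4_11.c}) by choosing $c \in \Mi(A)$ with $\| c b c^* - (a - \tfrac{\eta}{2})_{+} \| < \tfrac{\eta}{4}$, then $\dt > 0$ so small that $\| c (b - \dt)_{+} c^* - c b c^* \| < \tfrac{\eta}{4}$, and applying~(\ref{PhiB.Lem_18_4_10.a}) and~(\ref{PhiB.Lem_18_4_8}) to obtain $(a - \eta)_{+} \precsim_A c (b - \dt)_{+} c^* \precsim_A (b - \dt)_{+}$.

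I expect the only genuine obstacle to be the positivity manipulation $e^{1/2} b e^{1/2} \ge e^{2}$ underlying~(\ref{PhiB.Lem_18_4_10.a}) — essentially Rørdam's trick; once it is recorded, everything else follows mechanically from the facts listed at the outset, which is presumably why the statement is simply attributed to~\cite{KR00, Ph14, Ror92}.
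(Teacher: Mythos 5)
The paper gives no proof of this lemma (it is simply cited from \cite{KR00, Ph14, Ror92}), so the only issue is whether your argument is sound. Your treatments of part~(\ref{PhiB.Lem_18_4_8}) (pure functional calculus), part~(\ref{PhiB.Lem_18_4_10.a}) (the R{\o}rdam-type argument $e^2 \le e^{1/2} b e^{1/2}$ with $e = (a-\eta)_+$), and part~(\ref{PhiB.Lem_18_4_11}) (the four implications, with (\ref{PhiB.Lem_18_4_11.b})$\Rightarrow$(\ref{PhiB.Lem_18_4_11.c}) correctly reduced to (\ref{PhiB.Lem_18_4_10.a}) and (\ref{PhiB.Lem_18_4_8})) are correct and are the standard arguments from the cited sources.

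There is, however, a genuine gap in your proof of part~(\ref{Item_9420_LgSb_1_6}). You pass from the scalar fact that $s \mapsto (s-\ld)_+$ is $1$-Lipschitz to the operator-norm inequality $\| (a-\ld)_+ - (b-\ld)_+ \| \le \| a - b \|$ for noncommuting $a, b$. Scalar Lipschitz continuity does not imply operator Lipschitz continuity: the positive-part function differs from the absolute value by an affine function, and the absolute value is not operator Lipschitz; the best constant $C_n$ in $\| x_+ - y_+ \| \le C_n \| x - y \|$ for self-adjoint $n \times n$ matrices is unbounded in $n$ (it grows logarithmically), and shifting $x, y$ by a large multiple of the identity turns such counterexamples into positive elements of exactly the form you need. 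So that step fails in general C*-algebras. Fortunately the repair is immediate and stays inside your own toolkit: part~(\ref{Item_9420_LgSb_1_6}) follows by rerunning your proof of~(\ref{PhiB.Lem_18_4_10.a}) one level down. Set $e = (a - \ld - \eta)_+$. From $\| a - b \| < \eta$ you get $a - \ld - \eta \le b - \ld \le (b - \ld)_+$ in the unitization, hence
\[
e^2 = e^{1/2} ( a - \ld - \eta ) e^{1/2} \le e^{1/2} (b - \ld)_+ e^{1/2},
\]
and then, exactly as before, $e \sim_A e^2 \precsim_A e^{1/2} (b-\ld)_+ e^{1/2} \sim_A (b-\ld)_+^{1/2} e \, (b-\ld)_+^{1/2} \le \| e \| (b - \ld)_+ \sim_A (b-\ld)_+$. (The route in the cited references is the sharper form of~(\ref{PhiB.Lem_18_4_10.a}) from Lemma~2.2 of \cite{KR00}, producing a contraction $d$ with $(a-\eta)_+ = d b d^*$, combined with $(d b d^* - \ld)_+ \precsim_A d (b - \ld)_+ d^*$; either fix is fine, but as written your step is false.)
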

The following definition is originally from \cite{BH82}.
It is also Defnition~3.1 of \cite{Hag14}.
\begin{dfn}\label{quasitrace}
A {\emph{quasitrace}} on a unital C*-algebra $A$ is
a function $\rho \colon A \to \mathbb{C}$
such that  the following hold:
\begin{enumerate}
\item\label{quasitrace.a}
$\rho (a^* a) = \rho (a a^*) \geq 0$ for all $a \in A$.
\item\label{quasitrace.b}
$\rho (b + i c ) = \rho (b) + i \rho ( c ) $
for $b, c \in A_{\mathrm{sa}}$.
\item\label{quasitrace.c}
$\rho |_B$ is linear
for every commutative C*-subalgebra $B \subseteq A$.
\item\label{quasitrace.d}
There is a function $\rho_2 \colon M_2 (A) \to \mathbb{C}$
satisfying
(\ref{quasitrace.a}), (\ref{quasitrace.b}), and (\ref{quasitrace.c})
with $M_2 (A)$ in place of $A$,
and such that,
with $(e_{j, k})_{j, k = 1}^{2}$
denoting the standard system of matrix units in $M_2 (\mathbb{C})$,
for all $a \in A$ we have
\[
\rho (a) = \rho_2 (a \otimes e_{1, 1}).
\]
\end{enumerate}
A quasitrace $\rho$ on a unital \ca{}
is {\emph{normalized}} if $\rho (1) = 1$.
The set of normalized quasitraces on a unital C*-algebra $A$ is denoted by $\QT (A)$.
\end{dfn}

It was shown in the discussion after Proposition II.4.6 of~\cite{BH82} that 
$\QT ( A )\neq \varnothing$ for every stably finite unital C*-algebra $A$.  
We refer to \cite{BH82, Hag14} for more details about quasitraces. 

The following is Definition~6.1 of~\cite{Tom06},
except that we allow $r = 0$ in~(\ref{rc_dfn.a}).
This change makes no difference.

\begin{dfn}\label{rc_dfn}
Let $A$ be a stably finite unital C*-algebra.

\begin{enumerate}
\item\label{rc_dfn_a_0}
For $\rho \in \QT(A)$, define $d_{\rho} \colon \Mi (A)_{+} \to [0, \infty)$
by
\[
d_{\rho} (a) = \lim_{n \to \infty} \rho (a^{1/n}).
\]
\item\label{rc_dfn.a}
Let $r \in [0, \I)$.
We say that $A$ has {\emph{$r$-comparison}} if whenever
$a, b \in M_{\infty} (A)_{+}$ satisfy
\[
d_{\rho} (a) + r < d_{\rho} (b)\]
for all $\rho \in \QT (A)$,
then $a \precsim_A b$.
\item\label{rc_dfn.b}
The {\emph{radius of comparison}} of~$A$,
denoted ${\operatorname{rc}} (A)$, is
\[
\rc (A)
 = \inf \big( \big\{ r \in [0, \I) \colon
    {\mbox{$A$ has $r$-comparison}} \big\} \big)
\]
if it exists, and $\infty$ otherwise.
\end{enumerate}
\end{dfn}
Note that the limit in Definition~\ref{rc_dfn}(\ref{rc_dfn_a_0}) obviously exists if $|| a || \leq 1$. If $|| a || > 1$,
    the limit is the same as if one uses $|| a ||^{-1} a$ in place
    of $a$.
    Also, It was shown in Proposition~6.3 of~\cite{Tom06} that if $A$ is simple, then $A$ has ${\operatorname{rc}} (A)$-comparison.

The following proposition is taken from Proposition~6.2 of \cite{Tom06}.
\begin{prp}
\label{Prp6.2.Tom06}
Let $A$ and $B$ be  stably finite unital C*-algebras. Then:
\begin{enumerate}
\item
\label{Prp6.2.Tom06.a}
$\rc (A \oplus B) = \max \big(\rc (A), \rc(B)\big)$.
\item
\label{Prp6.2.Tom06.b}
$\rc (M_n \otimes A)= \frac{1}{n} \cdot \rc (A)$ for $n \in \N$.
\end{enumerate}

\end{prp}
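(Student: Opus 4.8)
The plan is to reduce both parts to a dictionary describing how $\QT(\,\cdot\,)$, the set $M_{\infty}(\,\cdot\,)_{+}$ equipped with Cuntz subequivalence, and the dimension functions $d_{\rho}$ transform under direct sums and matrix amplifications; after that each part becomes a short manipulation with the defining inequality of $r$-comparison. For matrix amplifications I would use the standard facts about quasitraces (see \cite{BH82, Hag14}) that every normalized quasitrace on $M_n \otimes A$ has the form $\rho' = \frac{1}{n}(\Tr_n \otimes \sigma)$ for a unique $\sigma \in \QT(A)$, where $\Tr_n$ denotes the non-normalized trace on $M_n$, and that under the identification $M_{\infty}(M_n \otimes A) = M_{\infty}(A)$ arising from $M_k(M_n(A)) = M_{kn}(A)$ Cuntz subequivalence is unchanged while $d_{\rho'} = \frac{1}{n}\, d_{\sigma}$. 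For direct sums I would use the central projections $(1_A, 0)$ and $(0, 1_B)$ together with the quasitrace axioms — the only slightly delicate point being axiom~\ref{quasitrace.d} together with uniqueness of the extension of a quasitrace to matrix algebras — to obtain: $\QT(A \oplus B)$ is exactly the set of functionals $\rho_t = t\rho_A \oplus (1-t)\rho_B$ with $t \in [0,1]$, $\rho_A \in \QT(A)$, $\rho_B \in \QT(B)$; $M_{\infty}(A \oplus B)_{+} = M_{\infty}(A)_{+} \oplus M_{\infty}(B)_{+}$, with $(a_1, a_2) \precsim_{A \oplus B} (b_1, b_2)$ if and only if $a_1 \precsim_A b_1$ and $a_2 \precsim_B b_2$; and $d_{\rho_t}\big((a_1, a_2)\big) = t\, d_{\rho_A}(a_1) + (1-t)\, d_{\rho_B}(a_2)$. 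I also use two elementary remarks: $A$ has $r$-comparison whenever $r > \rc(A)$, and $d_{\rho_A}(a_1) \le k < \infty$ for every $a_1 \in M_k(A)_{+}$.

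For part~(\ref{Prp6.2.Tom06.a}), the bound $\rc(A \oplus B) \le \max(\rc(A), \rc(B))$ is immediate: given $s > \max(\rc(A), \rc(B))$ and elements with $d_{\rho}(a) + s < d_{\rho}(b)$ for all $\rho \in \QT(A \oplus B)$, specializing to $\rho = \rho_A \oplus 0$ and to $\rho = 0 \oplus \rho_B$ yields $a_1 \precsim_A b_1$ and $a_2 \precsim_B b_2$, hence $a \precsim_{A \oplus B} b$. For the reverse inequality it suffices, by symmetry, to show $\rc(A) \le \rc(A \oplus B)$, and we may assume $\rc(A \oplus B) < \infty$. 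Fix $s > \rc(A \oplus B)$ and an integer $m > s$, suppose $a_1, b_1 \in M_{\infty}(A)_{+}$ satisfy $d_{\rho_A}(a_1) + s < d_{\rho_A}(b_1)$ for all $\rho_A \in \QT(A)$, and set $a = (a_1, 0)$ and $b = \big(b_1,\, 1_B^{\oplus m}\big)$ in $M_{\infty}(A \oplus B)_{+}$. Then for every $\rho_t \in \QT(A \oplus B)$,
\[
d_{\rho_t}(b) - d_{\rho_t}(a) - s \;=\; t\big(d_{\rho_A}(b_1) - d_{\rho_A}(a_1)\big) + (1-t)m - s ,
\]
which is $> (1-t)(m-s) \ge 0$ when $0 < t \le 1$ (using $d_{\rho_A}(b_1) - d_{\rho_A}(a_1) > s$) and equals $m - s > 0$ when $t = 0$. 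Hence $d_{\rho_t}(a) + s < d_{\rho_t}(b)$ for all $\rho_t$, so $s$-comparison of $A \oplus B$ gives $a \precsim_{A \oplus B} b$, and reading off the first coordinate gives $a_1 \precsim_A b_1$. Thus $A$ has $s$-comparison for every $s > \rc(A \oplus B)$, i.e.\ $\rc(A) \le \rc(A \oplus B)$.

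For part~(\ref{Prp6.2.Tom06.b}), the dictionary shows that for $a, b \in M_{\infty}(A)_{+} = M_{\infty}(M_n \otimes A)_{+}$ and $r \ge 0$, the condition ``$d_{\rho'}(a) + r < d_{\rho'}(b)$ for all $\rho' \in \QT(M_n \otimes A)$'' is equivalent to ``$d_{\sigma}(a) + nr < d_{\sigma}(b)$ for all $\sigma \in \QT(A)$''. Consequently $M_n \otimes A$ has $r$-comparison if and only if $A$ has $nr$-comparison, so the set of $r \ge 0$ for which $M_n \otimes A$ has $r$-comparison is exactly $\frac{1}{n}$ times the set of $r' \ge 0$ for which $A$ has $r'$-comparison; taking infima gives $\rc(M_n \otimes A) = \frac{1}{n}\rc(A)$. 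I expect the only genuinely fiddly step to be the dictionary itself — in particular keeping the normalization constants straight so that one gets $d_{\rho'} = \frac{1}{n} d_{\sigma}$ rather than a factor of $n$, and checking via axiom~\ref{quasitrace.d} that every normalized quasitrace on $A \oplus B$ splits as $t\rho_A \oplus (1-t)\rho_B$; once that is done, the two parts are precisely the short computations above.
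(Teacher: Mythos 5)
Your argument is correct, but note that the paper does not prove this statement at all: it simply quotes it as Proposition~6.2 of \cite{Tom06}, so there is no in-paper proof to compare against. Your self-contained proof is essentially the standard one behind Toms's proposition. The dictionary you set up is right, and the two points you flag as delicate are exactly the right ones: the identification of $\QT(M_n \otimes A)$ with $\QT(A)$ via $\rho' = \tfrac{1}{n}(\Tr_n \otimes \sigma)$, and the splitting of $\QT(A \oplus B)$ as $\{ t\rho_A \oplus (1-t)\rho_B \}$, both rest on the Blackadar--Handelman fact that a $2$-quasitrace extends uniquely to all matrix algebras --- an ingredient the paper itself already uses implicitly when it defines $d_\rho$ on $M_\infty(A)_+$ from Definition~\ref{quasitrace}, which only postulates an extension to $M_2(A)$; it would be worth citing \cite{BH82} explicitly for that. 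The one genuinely nontrivial move in part~(\ref{Prp6.2.Tom06.a}), padding $b$ with $1_B^{\oplus m}$ for an integer $m > s$ so that the comparison hypothesis also holds for quasitraces concentrated on $B$ (the $t=0$ and small-$t$ cases), is handled correctly, and the coordinatewise characterization of Cuntz subequivalence in $M_\infty(A \oplus B)$ lets you read off $a_1 \precsim_A b_1$ at the end. Part~(\ref{Prp6.2.Tom06.b}) as you state it is a clean equivalence of comparison properties under the scaling $r \mapsto nr$, and taking infima does give the asserted formula. I see no gap.
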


The following proposition, which is a
special case of results in~\cite{BRTTW12} and is Theorem~12.4.4 of \cite{GKPT18},
 is an algebraic reformulation of the radius of comparison for simple C*-algebras.
\begin{prp}\label{rc_alg_def}
Let $A$ be a stably finite simple unital C*-algebra.
Then  $\rc(A)$ is the least number $s \in [0, \infty]$ such that
whenever $m,n \in \N$ satisfy $m/n > s$, and $a,b \in M_{\infty} (A)_+$ satisfy
\[
(n + 1)\langle a \rangle_ A + m\langle 1 \rangle_ A \leq \langle b \rangle_ A
\]
in $W(A)$, then $a \precsim_A b$.
\end{prp}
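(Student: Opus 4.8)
Write $r_0 \in [0,\infty]$ for the infimum of the set of $s$ at which the property stated in the proposition holds; the assertion is that $r_0 = \rc(A)$ and that the property holds at $s = r_0$. The plan is to establish (i) the property holds at $s = \rc(A)$, and (ii) $A$ has $r$-comparison for every $r > r_0$. Then (i) gives $r_0 \le \rc(A)$, (ii) gives $\rc(A) \le r_0$ (by Definition~\ref{rc_dfn}), so $r_0 = \rc(A)$ and the property holds at $r_0 = \rc(A)$ by~(i). Throughout I use that $A$ stably finite forces $\QT(A) \ne \varnothing$ (after Proposition~II.4.6 of~\cite{BH82}), that each $\rho \in \QT(A)$ gives an additive, order-preserving, $[0,\infty)$-valued map $\langle x\rangle_A \mapsto d_\rho(x)$ on $W(A)$ with $d_\rho(\langle 1\rangle_A) = 1$ and $d_\rho(b) = \sup_{\dt>0} d_\rho((b-\dt)_+)$, and that a simple $A$ has $\rc(A)$-comparison by Proposition~6.3 of~\cite{Tom06}.

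For~(i) (vacuous if $\rc(A) = \infty$): given $m, n \in \N$ with $m/n > \rc(A)$ and $a, b \in M_\infty(A)_+$ satisfying the displayed relation in $W(A)$, the case $a = 0$ is trivial; otherwise $d_\rho(a) > 0$ for all $\rho$ because $A$ is simple, and applying $d_\rho$ to the displayed relation and rearranging — using $m/n > \rc(A)$ and $d_\rho(a) > 0$ — yields $d_\rho(a) + \rc(A) < d_\rho(b)$ for every $\rho \in \QT(A)$, whence $a \precsim_A b$ by $\rc(A)$-comparison.

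For~(ii), assume $r_0 < \infty$, fix $r > r_0$, and let $a, b \in M_\infty(A)_+$ satisfy $d_\rho(a) + r < d_\rho(b)$ for all $\rho$; after scaling assume $\|a\| \le 1$. By Lemma~\ref{PhiB.Lem_18_4}(\ref{PhiB.Lem_18_4_11}) it is enough to find, for each $\ep > 0$, some $\dt > 0$ with $(a-\ep)_+ \precsim_A (b-\dt)_+$. Fixing $\ep$, take a continuous $g$ with $\chi_{(\ep,\infty)} \le g \le \chi_{(\ep/2,\infty)}$ on $\spec(a)$, so $\rho \mapsto \rho(g(a))$ is continuous and $d_\rho((a-\ep)_+) \le \rho(g(a)) \le d_\rho(a)$. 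Using weak-$*$ compactness of $\QT(A)$, lower semicontinuity of $\rho \mapsto d_\rho((b-\dt)_+)$ for fixed $\dt$, and $d_\rho((b-\dt)_+) \uparrow d_\rho(b)$, a routine covering argument (choose $\dt(\rho)$ realizing the strict inequality at each $\rho$, pass to a finite subcover and let $\dt$ be the smallest chosen $\dt(\rho)$, then take $\et > 0$ to be the infimum of the resulting positive, lower-semicontinuous defect) yields $\dt, \et > 0$ with $d_\rho((a-\ep)_+) + r + \et \le d_\rho((b-\dt)_+)$ for all $\rho \in \QT(A)$. Finally choose $s'$ with $r_0 \le s' < r$ at which the property holds, a rational $q \in (s', r)$, write $q = m/n$, and enlarge $n$ (keeping $m/n = q$) so that $n(r-q) > \sup_\rho d_\rho((a-\ep)_+)$, which is finite; then check that $(a-\ep)_+$, $(b-\dt)_+$ and these $m, n$ satisfy the displayed relation in $W(A)$, and conclude $(a-\ep)_+ \precsim_A (b-\dt)_+$ from the property at level $s'$, since $m/n = q > s'$.

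The one substantive point is the final check: obtaining the genuine $W(A)$-relation for $(a-\ep)_+$ and $(b-\dt)_+$ from the uniform estimate $d_\rho((a-\ep)_+) + r + \et \le d_\rho((b-\dt)_+)$. Applying the $d_\rho$ only produces a \emph{necessary} numerical inequality, so this is not a formal manoeuvre; it is exactly where the simplicity and infinite-dimensionality of $A$ are needed — through the absence of a smallest nonzero Cuntz class in $\Cu(A)$, R\o rdam-type comparison lemmas, and the freedom to take the multiplicity $n$ large — and it is the content of the cited results (a special case of~\cite{BRTTW12}; Theorem~12.4.4 of~\cite{GKPT18}). In practice I would invoke that theorem directly; everything else is the bookkeeping sketched above.
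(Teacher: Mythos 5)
There is nothing in the paper to compare your argument against: Proposition~\ref{rc_alg_def} is not proved there at all, but quoted as a special case of \cite{BRTTW12} and as Theorem~12.4.4 of \cite{GKPT18}. Judged as a proof, your proposal is sound in its easy half and has a genuine gap exactly at the point you flag, and your way of closing it is circular. Part~(i) (the property holds at $s=\rc(A)$, via applying $d_\rho$ and Proposition~6.3 of \cite{Tom06}) is fine, and the compactness/semicontinuity reduction in~(ii) to a uniform estimate $d_\rho\big((a-\ep)_+\big)+r+\et\le d_\rho\big((b-\dt)_+\big)$ for all $\rho\in\QT(A)$ is the standard argument and is correct. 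But the ``final check'' --- producing the relation $(n+1)\langle (a-\ep)_+\rangle_A+m\langle 1\rangle_A\le n\langle (b-\dt)_+\rangle_A$ in $\W(A)$ --- cannot be carried out from that estimate: an order relation in $\W(A)$ is never certified by evaluating the functionals $d_\rho$ with $\rho\in\QT(A)$; that would amount to a comparison hypothesis on $A$, and the proposition is of interest precisely when $\rc(A)>0$. The actual proof of this implication in \cite{BRTTW12}/\cite{GKPT18} works with \emph{all} states on the ordered semigroup $\big(\W(A),\langle 1\rangle_A\big)$, i.e.\ all normalized dimension functions rather than only the lower semicontinuous ones coming from $\QT(A)$ (the passage between the two uses cut-downs $(x-\ep)_+$ and the Blackadar--Handelman correspondence \cite{BH82}), and then invokes a Goodearl--Handelman/R{\o}rdam-type separation theorem to manufacture the semigroup relation. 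That is the entire content of the hard direction; calling it bookkeeping and proposing to ``invoke that theorem directly'' is invoking the very statement being proved. Deferring to the citation is of course exactly what the paper does, but then one should present the proposition as quoted, not as proved.

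A secondary point: the displayed relation in the statement reads $\le\langle b\rangle_A$, whereas the form actually used later in the paper (proof of Theorem~\ref{Th_rc_Tr_inner}) and the form in \cite{GKPT18} is $\le n\langle b\rangle_A$. Your ``enlarge $n$'' computation only yields the numerical inequalities $(n+1)d_\rho\big((a-\ep)_+\big)+m\le n\,d_\rho\big((b-\dt)_+\big)$, i.e.\ it targets the $n\langle b\rangle_A$ version; with the relation as printed you would need $d_\rho\big((b-\dt)_+\big)\ge m\ge 1$, which can fail. So if you do write this up, state and use the corrected form of the relation.
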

The following theorem, providing bounds on the radius of comparison of a full corner
in a \ca, is a special case of Theorem~2.18 in \cite{AGP19}. 
\begin{thm}\label{Ourcornertheorem}
Let $A$ be a stably finite unital \ca{}
and let $q$ be a full
projection in $A$.
Define
\[
\lambda
 = \inf \bigl( \bigl\{ \rho (q) \colon
     \rho \in \QT (A) \bigr\} \bigr)
\andeqn
\eta
 = \sup \bigl( \bigl\{ \rho (q) \colon
     \rho \in \QT (A) \bigr\} \bigr).
\]
Then $0 < \ld \leq \et \leq 1$ and
$
\frac{1}{\eta} \cdot \rc (A)
 \leq \rc ( q A q )
 \leq \frac{1}{\lambda} \cdot \rc (A).
$
\end{thm}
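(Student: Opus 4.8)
The statement to prove is Theorem~\ref{Ourcornertheorem}, which bounds the radius of comparison of a full corner $qAq$ in terms of $\rc(A)$ and the infimum and supremum of $\rho(q)$ over normalized quasitraces $\rho$. The plan is to reduce everything to the algebraic reformulation of the radius of comparison provided by Proposition~\ref{rc_alg_def}, after first stabilizing so that $q$ becomes a rank-$1$-type projection sitting inside a matrix amplification of $A$. The key classical input is the fact (due to Brown, or in the stated generality to \cite{BRTTW12, AGP19}) that if $q$ is a full projection in $A$, then $\cK\otimes qAq \cong \cK \otimes A$, so that $\Cu(qAq) \cong \Cu(A)$ via the inclusion $qAq \hookrightarrow A$; under this identification the order units $\langle q\rangle$ and $\langle 1_A\rangle$ differ, and that is exactly what produces the scaling factors $\tfrac1\eta$ and $\tfrac1\lambda$.

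First I would establish that $0 < \lambda \le \eta \le 1$: the upper bound is immediate from $q \le 1$ and $\rho(1)=1$, and positivity of $\lambda$ follows because $q$ is full together with lower semicontinuity of $\rho \mapsto \rho(q)$ on the (weak-* compact) set $\QT(A)$ — a full projection cannot be in the kernel of any quasitrace, and the infimum of a strictly positive lower semicontinuous function on a compact set is attained and positive. (If $\QT(A)=\varnothing$ the statement is vacuous, but $A$ stably finite unital guarantees $\QT(A)\neq\varnothing$ by the remark after Definition~\ref{quasitrace}.) Next, I would recall the bijective correspondence between $\QT(A)$ and $\QT(qAq)$ given by restriction-and-renormalization: a quasitrace $\tau$ on $qAq$ extends (using fullness and the stable isomorphism) to a quasitrace on $A$, and conversely $\rho \in \QT(A)$ restricts to $\rho(q)^{-1}\rho|_{qAq} \in \QT(qAq)$; crucially, for $a \in \Mi(qAq)_+$ one has $d_{\rho}(a) = \rho(q)\, d_{\tau}(a)$ where $\tau$ is the renormalized restriction.

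The heart of the argument is then a two-sided comparison using Proposition~\ref{rc_alg_def}. For the upper bound $\rc(qAq) \le \tfrac1\lambda \rc(A)$: take $m,n$ with $m/n > \tfrac1\lambda \rc(A)$ and $a,b \in \Mi(qAq)_+$ with $(n+1)\langle a\rangle_{qAq} + m\langle q\rangle_{qAq} \le \langle b\rangle_{qAq}$; push this inclusion into $\Cu(A)$ via the (order-embedding) inclusion, use $\langle q\rangle \ge \lambda \langle 1_A\rangle$ at the level of dimension functions — more precisely, translate to the quasitrace inequality $d_\rho(b) \ge (n+1)d_\rho(a) + m\rho(q) \ge (n+1)d_\rho(a) + m\lambda$ for all $\rho\in\QT(A)$, and since $m\lambda/n > \rc(A)$, conclude $a \precsim_A b$, hence $a\precsim_{qAq} b$ because the inclusion reflects Cuntz subequivalence. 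The lower bound $\tfrac1\eta\rc(A) \le \rc(qAq)$ is the symmetric argument run the other way: given $a,b\in\Mi(A)_+$ witnessing failure of comparison at some $s < \tfrac1\eta\rc(A)$, transport $a,b$ into $\Mi(qAq)_+$ under the stable isomorphism and use $\rho(q)\le\eta$ to show the corresponding inequality forces $s' < \rc(qAq)$ where $s' = \eta s < \rc(A)$, contradiction — taking suprema over the relevant $m/n$ gives the bound.

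The main obstacle I anticipate is the bookkeeping in the stabilization step: making precise the identification $\Cu(qAq)\cong\Cu(A)$ as ordered semigroups (not just as sets), tracking exactly how order units and dimension functions transform under it, and verifying that the inclusion $qAq\hookrightarrow A$ both preserves and reflects $\precsim$ at the level of $\Mi(\cdot)_+$ (reflection is the subtle direction and uses fullness essentially). Once that dictionary is in place the quasitrace inequalities are routine, but one must be careful that the algebraic criterion of Proposition~\ref{rc_alg_def} is being applied to the \emph{simple} corner $qAq$ — simplicity of $A$ passes to $qAq$ for $q\neq 0$, so this is fine — and that the $(n+1)$ versus $n$ discrepancy in that criterion does not interfere with the limiting argument, which it does not since we are free to choose $m/n$ strictly between the relevant values.
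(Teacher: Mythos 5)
The paper offers no proof of this theorem at all: it is quoted verbatim as a special case of Theorem~2.18 of \cite{AGP19}, so your proposal has to stand on its own, and as written it has two genuine gaps. The first is a hypothesis mismatch: the theorem assumes only that $A$ is stably finite and unital — neither $A$ nor $qAq$ is simple — yet your argument is routed through Proposition~\ref{rc_alg_def}, which is stated (and valid) only for simple algebras, and you justify applying it to the corner by asserting that ``simplicity of $A$ passes to $qAq$,'' invoking a simplicity hypothesis that is not there. The general case is actually needed in this paper (in Proposition~\ref{Pr.In.Ro} the algebra to which the theorem is applied is a crossed product that is not assumed simple). For the upper bound the detour is also unnecessary: working directly with $r$-comparison from Definition~\ref{rc_dfn}, for $r > \rc (A)$ and $a, b \in \Mi (q A q)_+$ with $d_\tau (a) + r/\lambda < d_\tau (b)$ for all $\tau \in \QT (q A q)$, one restricts and renormalizes each $\rho \in \QT (A)$ (fullness gives $\langle 1_A \rangle \leq k \langle q \rangle$, hence the uniform bound $\rho (q) \geq 1/k$, which also yields $\lambda > 0$ without any compactness or semicontinuity argument) to get $d_\rho (a) + r < d_\rho (b)$, concludes $a \precsim_A b$ from $r$-comparison of $A$, and then $a \precsim_{q A q} b$ because $M_k (q A q)$ is hereditary in $M_k (A)$; no algebraic reformulation and no simplicity enter.

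The second gap is the lower bound, where the real work lies and where your sketch stops. The ``transport'' of $a, b \in \Mi (A)_+$ into $\Mi (q A q)_+$ ``under the stable isomorphism'' is not available as stated: Brown's isomorphism $\cK \otimes q A q \cong \cK \otimes A$ lands in the stabilization, not in matrix algebras over $q A q$, and the radius of comparison is defined through $\W$, not $\Cu$. What fullness actually gives is $\langle a \rangle_A \leq n k \langle q \rangle_A$ for $a \in M_n (A)_+$, and then only the cutdowns $(a - \ep)_+$ are Cuntz equivalent to elements of $M_{n k} (q A q)_+$ (via the standard fact that $a \precsim_A p$ with $p$ a projection realizes $(a - \ep)_+$, up to equivalence, inside the corner $p M_{n k} (A) p$). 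One must then run the resulting $\ep$--$\dt$ bookkeeping against strict inequalities of the form $d_\tau (\cdot) + s < d_\tau (\cdot)$ that have to hold uniformly over all of $\QT (q A q)$, and one must also justify that every $\tau \in \QT (q A q)$ arises as the normalized restriction of some $\rho \in \QT (A)$ — extension of $2$-quasitraces from a full corner is true but requires an argument. You correctly identify this bookkeeping as the main obstacle, but leaving it unresolved means the inequality $\tfrac{1}{\eta} \cdot \rc (A) \leq \rc (q A q)$ is not actually established; the quasitrace rescaling $d_\rho = \rho (q)\, d_\tau$ on the corner and the provenance of the factors $1/\lambda$ and $1/\eta$ are, however, the right skeleton and agree with what Theorem~2.18 of \cite{AGP19} carries out in detail.
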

\section{The radius of comparison and strict approximate innerness}
\label{Sec_Approx_Innner}
In this section, we first prove that approximate representability in the sense of Izumi 
and approximate representability in the sense of Phillips are  the same in the setting of nonabelian finite groups. 
We then give the definition of strictly approximately inner actions 
of finite groups on  unital C*-algebras. 
Then we find a unital linear map  from $C^*(G, A, \alpha)$
to $A$ which is an approximate homomorphism on finite sets and whose restriction to $A$ is the identity map.
Also, we show that 
the map $\W (\iota) \colon \W (A) \to \W (\CGAa)$
and 
the map $\Cu (\iota) \colon \Cu (A) \to \Cu (\CGAa)$
which are induced by the inclusion map of $A$ in $C^*(G, A, \alpha)$ are injective.
We further find a lower bound and an upper bound for the radius of comparison of $C^*(G, A, \alpha)$.
\begin{dfn}
Let $A$ be a C*-algebra. We  denote the set of all bounded
sequences in A with the supremum norm and pointwise operations by $l^{\infty} (\N, A)$.
Define
\[
c_0 (\N, A)=\left\{ (a_n)_{n \in \N} \in  l^{\infty} (\N, A) \colon \lim_{n \to \infty} \| a_n \|=0  \right\}.
\]
Clearly $c_0 (\N, A)$ is an ideal of $l^{\infty} (\N, A)$. Then define
\[
A_{\infty} = l^{\infty} (\N, A)/ c_0 (\N, A).
\]
We identify $A$ with the C*-subalgebra of $A_{\infty}$ consisting of the equivalence classes of
constant sequences.
Let $\alpha \colon G \to \Aut (A)$ be an action of a finite group $G$ on a C*-algebra $A$.
 We write $\alpha_{\infty} \colon G \to \Aut (A_{\infty})$ for the induced action.
\end{dfn}
The following definition 
is a generalization of the Definition~3.6(1) of \cite{Iz1} 
to nonabelian finite groups  in terms of the  sequence algebra.
\begin{dfn}
\label{Df_Strog_App_Inn_IZ}
Let $\alpha \colon G \to \Aut (A)$ be an action of a finite group $G$ on a unital C*-algebra $A$.
The action $\alpha$ is said to be \emph{strongly approximately inner} if 
there exist unitaries $w(g) \in A_{\infty}$ such that:
\begin{enumerate}
\item
$\alpha_{{\infty}, g} (x) = w(g)x w(g)^*$ for all  $x \in A$ and all $g \in G$.
\item
$\alpha_{{\infty}, g} (w(h)) = w(ghg^{-1})$ for all $g, h \in G$.
\end{enumerate}
\end{dfn}

The following definition is a generalization of the Definition~3.6(2) of \cite{Iz1} 
to nonabelian finite groups  in terms of the  sequence algebra. 
It was also pointed out by Izumi in Remark~3.7 of \cite{Iz1}.
\begin{dfn} \label{Izum_Def_app_Rep}
Let $\alpha \colon G \to \Aut (A)$ be an action of a finite group $G$ on a unital C*-algebra $A$.
The action $\alpha$ is said to be \emph{approximately representable} if 
there exists a unitary representation $w$ of $G$ in $A_{\infty}$ such that:
\begin{enumerate}
\item
$\alpha_{{\infty}, g} (x) = w(g)x w(g)^*$ for all  $x \in A$ and all $g \in G$.
\item
$\alpha_{{\infty}, g} (w(h)) = w(ghg^{-1})$ for all $g, h \in G$.
\end{enumerate}
\end{dfn}
The following is a generalization of the Definition 1.3 of \cite{Ph15} to nonabelian finite groups.
\begin{dfn}
\label{D_9719_AppRep_Phi}
Let $A$ be a unital \ca,
let $G$ be a finite group,
and let $\af \colon G \to \Aut (A)$
be an action of $G$ on~$A$.
We say that $\af$ is
{\emph{approximately representable}}
if for every finite set  $F \S A$ and every $\ep > 0$,
there are  $z_g \in \U (A)$ for $g \in G$ such that:
\begin{enumerate}
\item \label{D_9719_AppRep.1}
$\| \af_g (a) - z_g a z_g^* \| < \ep$ for all $g \in G$ and all $a \in F$.
\item \label{D_9719_AppRep.2}
$\| z_g z_h - z_{gh}\| < \ep$ for all $g, h \in G$.
\item \label{D_9719_AppRep.3}
 $\| \af_g (z_h) - z_{g h g^{-1}}\| < \ep$ for all $g, h \in G$.
\end{enumerate}
\end{dfn}
Obviously, in the abelian case,
 Condition (\ref{D_9719_AppRep.3})
reduces to $\| \af_g (z_h) - z_{h}\| < \ep$.
The following lemma  is only intended to show that Lemma~3.1 of \cite{Ph11} is still true in the setting of nonabelian groups.
\begin{lem}\label{Lem.PZ.20200623}
Let $A$ be a separable unital C*-algebra and let $G$ be a finite group (not necessarily abelian).
Let $\alpha \colon G \to \Aut (A)$ be an action.
Then $\alpha$ is approximately representable in the sense of Izumi (Definition~\ref{Izum_Def_app_Rep}) if and only if
$\alpha$ is approximately representable in the sense of Phillips (Definition~\ref{D_9719_AppRep_Phi}).
\end{lem}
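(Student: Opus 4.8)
The plan is to prove both implications by working in the sequence algebra $A_\infty$ and translating between the ``global'' unitary representation in $A_\infty$ and the ``local'' approximate data indexed by finite sets and tolerances, using the separability of~$A$. For the direction from Izumi's definition to Phillips's definition, suppose $w\colon G\to\U(A_\infty)$ is a unitary representation with $\alpha_{\infty,g}(x)=w(g)xw(g)^*$ for $x\in A$ and $\alpha_{\infty,g}(w(h))=w(ghg^{-1})$. Lift each $w(g)$ to a bounded sequence $\big(z_g^{(n)}\big)_{n\in\N}$ in~$A$; since $G$ is finite we can do this simultaneously for all $g\in G$, and after a standard perturbation (functional calculus applied to $\big(z_g^{(n)}\big)^*z_g^{(n)}$, which converges to~$1$) we may assume each $z_g^{(n)}\in\U(A)$. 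The three defining equalities in $A_\infty$ say precisely that $\big\|z_g^{(n)}a\big(z_g^{(n)}\big)^*-\alpha_g(a)\big\|\to 0$ (for $a$ in a fixed finite set; note $F\S A\S A_\infty$), $\big\|z_g^{(n)}z_h^{(n)}-z_{gh}^{(n)}\big\|\to 0$, and $\big\|\alpha_g\big(z_h^{(n)}\big)-z_{ghg^{-1}}^{(n)}\big\|\to 0$ as $n\to\infty$, uniformly over the finite group~$G$. Hence for any finite $F\S A$ and $\ep>0$, choosing $n$ large enough supplies the unitaries $z_g:=z_g^{(n)}$ required by Definition~\ref{D_9719_AppRep_Phi}.

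For the converse, suppose $\alpha$ is approximately representable in the sense of Phillips. Since $A$ is separable, fix an increasing sequence of finite sets $F_1\S F_2\S\cdots$ whose union is dense in~$A$, and apply Definition~\ref{D_9719_AppRep_Phi} with $F=F_n$ and $\ep=\tfrac1n$ to obtain unitaries $z_g^{(n)}\in\U(A)$ for $g\in G$ satisfying the three estimates. Define $w(g)\in A_\infty$ to be the class of $\big(z_g^{(n)}\big)_{n\in\N}$; this is a unitary in $A_\infty$. Conditions~(\ref{D_9719_AppRep.1})--(\ref{D_9719_AppRep.3}) of Definition~\ref{D_9719_AppRep_Phi}, together with density of $\bigcup_n F_n$ and continuity of $\alpha_g$ and of multiplication, give exactly $\alpha_{\infty,g}(x)=w(g)xw(g)^*$ for all $x\in A$, $w(g)w(h)=w(gh)$, and $\alpha_{\infty,g}(w(h))=w(ghg^{-1})$ in $A_\infty$. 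In particular $w(e)=1$ (take $g=h=e$ in the second relation, or observe $w(e)=w(e)^2$ is a unitary idempotent), so $w$ is a genuine unitary representation of~$G$ in $A_\infty$, and the two innerness conditions are precisely those of Definition~\ref{Izum_Def_app_Rep}.

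The only genuine subtlety is making sure the passage from approximate to exact unitaries in $A_\infty$ preserves the representation property rather than merely the innerness property: in the ``only if'' direction one gets $w(g)w(h)=w(gh)$ exactly because the class of a Cauchy-in-the-quotient sequence is well defined and the tolerances $\ep=\tfrac1n$ tend to~$0$, so no further correction is needed; in the ``if'' direction the polar-decomposition perturbation changes each $z_g^{(n)}$ by $o(1)$ and therefore does not affect any of the three limits. A minor point worth a sentence is that in the nonabelian setting one must track the conjugation $ghg^{-1}$ in condition~(\ref{D_9719_AppRep.3}) consistently; but this is purely bookkeeping and the argument is formally identical to Lemma~3.1 of~\cite{Ph11}. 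I would therefore expect the write-up to be short, with the main (still routine) obstacle being the careful verification that the finite group allows all estimates to be arranged uniformly in $g,h\in G$ for a single index~$n$.
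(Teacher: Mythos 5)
Your proposal is correct and follows essentially the same route as the paper: for Phillips $\Rightarrow$ Izumi, use separability to pick an increasing exhausting sequence of finite sets with tolerances $1/n$ and let $w(g)$ be the class of $(z_g^{(n)})_{n}$ in $A_\infty$, checking the relations by a density/uniform-boundedness argument; for Izumi $\Rightarrow$ Phillips, lift the $w(g)$ to bounded sequences, correct them to unitaries by a small perturbation, and take $n$ large. The only cosmetic difference is that the paper performs the unitary correction by invoking semiprojectivity of $C(S^1)$ rather than your explicit functional-calculus (polar-decomposition) perturbation, which is the same tool in substance.
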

\begin{proof}
We first prove the backwards implication. Since $A$ is separable, 
we can choose finite sets $F_n \subset A$ such that
$F_1 \subseteq F_2 \subseteq \ldots$ and $\overline{\bigcup_{n=1}^{\infty} F_n} = A$.
Applying  Definition~\ref{D_9719_AppRep_Phi} for each $F_n$, we get 
 $z_{n}(g) \in \U (A)$ for $g \in G$ such that:
\begin{enumerate}
\item \label{D_9719_AppRep.1'}
$\| \af_g (a) - z_{n}(g) a z_{n}(g)^* \| < \frac{1}{n}$ for all $a \in F_n$ and all $g \in G$.
\item \label{D_9719_AppRep.2'}
$\| z_{n}(g) z_{n}(h) - z_{n}(gh)\| < \frac{1}{n}$ for all $g, h \in G$.
\item \label{D_9719_AppRep.3'}
 $\| \af_g (z_{n}(h)) - z_{n}(g h g^{-1})\| < \frac{1}{n}$ for all $g, h \in G$.
\end{enumerate}
Now, for $g \in G$, set $w^{(0)}_{ g}= (z_{n}(g))_{n\in \N}$ as an element in $l^{\infty} (\N, A)$
and let $w(g)$ be its image in $A_{\infty}$.
Then, for all $a \in \cup_{m=1}^{\infty} F_m$, we have
\begin{equation} \label{Eq1.20200510}
\lim_{n \to \infty}
 \left\| \alpha_{g} (a) - z_{n}(g) a z_{n}(g)^*\right\|=0.
\end{equation}
Let $\ep>0$ and let $x \in A$. Choose $m \in \N$ and $a \in F_m$ such that 
\begin{equation}  \label{Eq2.20200510}
\|x - a \|<\tfrac{\ep}{3}.
\end{equation}
By (\ref{Eq1.20200510}), we can choose $N \in \N$ such that $N \geq m$ and for all $n \geq N$
\begin{equation} \label{Eq3.20200510}
\left\| \alpha_{g} (a) - z_{n}(g) a z_{n}(g)^*\right\| < \frac{\ep}{3}.
\end{equation}
For all $n \geq N$, we use (\ref{Eq2.20200510}) and (\ref{Eq3.20200510}) to get
\begin{align*}
\left\| \alpha_{g} (x) - z_{n}(g) x z_{n}(g)^*\right\|
&<
\left\| \alpha_{g} (x) - \alpha_g (a)\right\|
+
\left\| \alpha_{g} (a) - z_{n}(g) a z_{n}(g)^*\right\|
\\
& \qquad+
\|z_{n}(g)\| \cdot \|  a  -  x \| \cdot \|z_{n}(g)^*\|
\\
&<
\frac{\ep}{3} + \frac{\ep}{3} + \frac{\ep}{3} =\ep.
\end{align*}
Thus,
\[
\lim_{n \to \infty}
 \left\| \alpha_{g} (x) - z_{n}(g) x z_{n}(g)^*\right\|=0.
 \]
Therefore, for all $x \in A$, and all $g \in G$,
\[
\alpha_{\infty, g} (x) = w(g) x w(g)^*.
\]
By (\ref{D_9719_AppRep.2'}) and (\ref{D_9719_AppRep.3'}), we further have, for all $g, h \in G$,
\[
\lim_{n \to \infty}
 \left\| z_{n}(g) z_{n}(h) - z_{n}(gh)\right\|=0
\quad 
\mbox{ and }
\quad 
\lim_{n \to \infty}
 \left\| \alpha_{g} (z_n (h)) - z_{n}(g h g^{-1})\right\|=0.
 \]
Thus, the map $g \mapsto w(g)$ is a unitary representation from $G$ to $A_{\infty}$ and 
$\alpha_{\infty, g} (w(h)) = w(g h g^{-1})$ for all $g, h \in G$. 

To prove the forwards implication, consider $w(g)$ in Definition~\ref{Izum_Def_app_Rep} 
as being presented by a sequence $(w_n (g))_{n \in \N}$.
Then 
\[
\lim_{n \to \infty} \|w_n(g) w_n (g)^* -1 \|=0
\qquad
\mbox{ and }
\qquad
\lim_{n \to \infty} \|w_n (g)^* w_n(g) -1 \|=0.
\]
Now using semiprojectivity of $C(S^1)$, we can find a sequence $(u_n (g))_{n \in \N}$ of unitaries in $A$ such that
$\lim_{n \to \infty} \|u_n(g) - w_n(g)\|=0$.
Then we still have, for all $a \in A$ and all $g, h\in G$,
\[
\lim_{n \to \infty} \| \alpha_{g} (a) - u_n (g) a u_n (g)\|=0
\qquad
\mbox{ and }
\qquad
\lim_{n \to \infty} \| \alpha_{g} (u_n (h)) - u_n (g h g^{-1})\|=0.
\]
Now take $z(g)= u_n (g)$ for some fixed and sufficiently large $n$.
Therefore Condition~(\ref{D_9719_AppRep.1'}) through Condition~(\ref{D_9719_AppRep.3'})
 in Definition~\ref{D_9719_AppRep_Phi} hold.
\end{proof}
Now, we  omit Condition (\ref{D_9719_AppRep.3}) in \Def{D_9719_AppRep_Phi} and make the following definition.
\begin{dfn}\label{D_9719_StrictAppInn}
Let $A$ be a unital \ca,
let $G$ be a finite group,
and let $\af \colon G \to \Aut (A)$
be an action of $G$ on~$A$.
We say that $\af$ is
{\emph{strictly approximately inner}}
if for every finite set $F \subset A$ and every $\ep > 0$,
there are  $z_g \in \U (A)$ for $g \in G$ such that:
\begin{enumerate}
\item \label{D_9719_StrictAppInn.1}
$\| \af_g (a) - z_g a z_g^* \| < \ep$ for all $g \in G$ and all $a \in F$.
\item \label{D_9719_StrictAppInn.2}
$\| z_g z_h - z_{gh}\| < \ep$ for all $g, h \in G$.
\end{enumerate}
\end{dfn}
\begin{rmk}
Obviously, 
approximate representability implies strict approximate innerness.
But there is no reason to expect that the converse is true and the naive method to prove the converse fails.
It seems that counterexamples are hard to find.
However, there is evidence of existing such examples. Namely, assume the notation in the proof of Lemma~\ref{Lem.PZ.20200623}.
Since $w_g^{(0)}$ is not a constant sequence, we cannot expect naively to get $\alpha_{\infty, g} (w(h)) = w(g h g^{-1})$ from 
(\ref{D_9719_AppRep.1'}) and (\ref{D_9719_AppRep.2'}).
\end{rmk}
We provide a stronger version of Definition~\ref{D_9719_StrictAppInn} in the following lemma.
\begin{lem}\label{Lem.s.a.i_hom}
Let $A$ be a \ca,
let $G$ be a finite group,
and let $\af \colon G \to \Aut (A)$
be an action of $G$ on~$A$. Then $\af$ is strictly approximately inner
if and only if
for every finite set $F \S A$ and every $\ep > 0$,
there is a \hm{} $g \mapsto z_g$
from $G$ to $\U (A)$ such that, for all $g \in G$ and all $a \in F$,
\[
\| \af_g (a) - z_g a z_g^* \| < \ep.
\] 
\end{lem}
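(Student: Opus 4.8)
The direction (⇐) is trivial: a genuine homomorphism $g \mapsto z_g$ satisfies $z_g z_h = z_{gh}$ exactly, so Condition (\ref{D_9719_StrictAppInn.2}) holds with any $\ep$, and Condition (\ref{D_9719_StrictAppInn.1}) is exactly what is assumed. The content is in (⇒): upgrading an approximate multiplicativity condition $\|z_g z_h - z_{gh}\| < \ep$ to an \emph{exact} homomorphism, at the cost of slightly enlarging the tolerance in (\ref{D_9719_StrictAppInn.1}). The plan is to use a standard averaging/perturbation argument for almost-homomorphisms of finite groups into the unitary group.

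First I would fix $F \S A$ and $\ep > 0$. Choose $\dt > 0$ small (to be specified, depending on $\ep$ and $\card(G)$) and apply Definition~\ref{D_9719_StrictAppInn} to get unitaries $z_g \in \U(A)$ with $\|\af_g(a) - z_g a z_g^*\| < \dt$ for all $g \in G$, $a \in F$, and $\|z_g z_h - z_{gh}\| < \dt$ for all $g, h \in G$. Without loss of generality arrange $z_e = 1$ (replace $z_g$ by $z_e^* z_g$; this costs only a controlled multiple of $\dt$ in all the estimates). The key step is then to straighten the $z_g$ into an exact representation. There are two natural routes. One is a direct cocycle-type correction: define an almost-inverse and almost-associative family and use the finiteness of $G$ to solve for a genuine homomorphism nearby; concretely, one can try $\tilde z_g$ built from a polar-decomposition/functional-calculus correction of suitable averages $\frac{1}{\card(G)}\sum_{h} z_{gh} z_h^*$, which is close to $z_g$ and for which the products $\tilde z_g \tilde z_h \tilde z_{gh}^*$ are even closer to $1$; iterating (or a single sufficiently good step) yields the claim. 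The cleaner route, which I would actually write, is to invoke stability of the relations defining a finite group inside a unital C*-algebra: the universal C*-algebra $C^*(G)$ (equivalently the group algebra $\C[G]$, which is finite-dimensional hence semiprojective/stable) has the property that any $\dt$-approximate unital representation of its generators into a unital C*-algebra $A$ is within $\kp(\dt)$ of an exact unital representation, with $\kp(\dt) \to 0$ as $\dt \to 0$ and $\kp$ independent of $A$. Applying this to the $z_g$ produces a genuine homomorphism $g \mapsto w_g \in \U(A)$ with $\|w_g - z_g\| < \kp(\dt)$ for all $g$.

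Finally I would combine the estimates: for $a \in F$ and $g \in G$,
\[
\|\af_g(a) - w_g a w_g^*\| \leq \|\af_g(a) - z_g a z_g^*\| + \|z_g a z_g^* - w_g a w_g^*\| < \dt + 2\|w_g - z_g\|\,\|a\| < \dt + 2\kp(\dt)\,\max_{a \in F}\|a\|,
\]
and then choose $\dt$ at the outset small enough that the right-hand side is below $\ep$. This gives the required homomorphism $g \mapsto w_g$.

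**Main obstacle.** The only real point is the stability statement for finite-group relations, i.e. that an $\ep$-approximate unitary representation of a finite group is close to an exact one uniformly in the ambient C*-algebra. This is well known (it follows from semiprojectivity of the finite-dimensional C*-algebra $C^*(G)$, or can be proved by hand via the averaging trick sketched above together with the continuity of the map $x \mapsto x(x^*x)^{-1/2}$ near the unitaries), but one must be a little careful to make all constants depend only on $\ep$ and $\card(G)$ and not on $A$, and to handle the normalization $z_e = 1$ so that the perturbed family is still indexed coherently by $G$. Everything else is bookkeeping with the triangle inequality.
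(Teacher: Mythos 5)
Your argument is correct and is essentially the paper's proof: the paper disposes of the lemma in one line by invoking semiprojectivity of $C^*(G)$, which is exactly the stability statement you use to replace the almost multiplicative unitaries $z_g$ by a nearby genuine unitary representation, the rest being the same triangle-inequality bookkeeping. The extra detail you supply (choosing $\dt$ in terms of $\ep$ and $\card(G)$, normalizing $z_e$, and noting the uniformity of the perturbation constant in $A$) is a faithful expansion of what the paper leaves implicit.
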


\begin{proof}
It follows from semiprojectivity of $C^*(G)$. 
\end{proof}
The following lemma provides a version of  Definition~\ref{D_9719_StrictAppInn} which is stated in terms of the  sequence algebra. 
\begin{lem}
Let $A$ be a separable unital C*-algebra and let $G$ be a finite group.
Let $\alpha \colon G \to \Aut (A)$ be an action.
Then $\alpha$ is strictly approximately inner if and only if
there exists a unitary representation $w$ of $G$ in $A_{\infty}$ such that
\[
\alpha_{{\infty} g} (x) = w(g)x w(g)^*
\]
 for all  $x \in A$ and all $g \in G$.
\end{lem}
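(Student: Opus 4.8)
The plan is to prove the equivalence by translating between the "finite sets and $\ep$" formulation of Definition~\ref{D_9719_StrictAppInn} and a statement about the sequence algebra $A_\infty$, exactly as was done for approximate representability in Lemma~\ref{Lem.PZ.20200623}; the only difference is that Conditions~(\ref{D_9719_AppRep.3'}) and the equivariance condition $\alpha_{\infty,g}(w(h)) = w(ghg^{-1})$ are simply dropped on both sides.

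For the forward implication, since $A$ is separable I would fix an increasing sequence of finite subsets $F_1 \subseteq F_2 \subseteq \cdots$ with dense union, and apply the "honest homomorphism" strengthening from Lemma~\ref{Lem.s.a.i_hom} with tolerance $\frac1n$ on $F_n$ to obtain group homomorphisms $g \mapsto z_n(g) \in \U(A)$ with $\| \af_g(a) - z_n(g) a z_n(g)^* \| < \frac1n$ for $a \in F_n$. Set $w(g)$ to be the class in $A_\infty$ of $(z_n(g))_{n \in \N}$. Since each $g \mapsto z_n(g)$ is an honest homomorphism, the map $g \mapsto w(g)$ is literally a unitary representation of $G$ in $A_\infty$ — this is cleaner than the approximate-representability case. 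Then a routine $\frac{\ep}{3}$-argument, identical to the one around (\ref{Eq1.20200510})--(\ref{Eq3.20200510}) in the proof of Lemma~\ref{Lem.PZ.20200623}, upgrades the convergence $\lim_n \| \af_g(a) - z_n(g) a z_n(g)^* \| = 0$ from a dense subset to all $x \in A$, giving $\alpha_{\infty,g}(x) = w(g) x w(g)^*$ for all $x \in A$ and $g \in G$.

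For the converse, suppose such a unitary representation $w$ of $G$ in $A_\infty$ exists, and lift each $w(g)$ to a sequence $(w_n(g))_{n \in \N}$ in $l^\infty(\N, A)$. As in Lemma~\ref{Lem.PZ.20200623}, $\lim_n \| w_n(g)^* w_n(g) - 1 \| = 0$ and $\lim_n \| w_n(g) w_n(g)^* - 1 \| = 0$, so by semiprojectivity of $C(S^1)$ I can replace $w_n(g)$ by genuine unitaries $u_n(g) \in \U(A)$ with $\lim_n \| u_n(g) - w_n(g) \| = 0$; the relation $\alpha_{\infty,g}(x) = w(g) x w(g)^*$ then gives $\lim_n \| \af_g(a) - u_n(g) a u_n(g)^* \| = 0$ for each $a \in A$, and since $w$ is a representation, $\lim_n \| u_n(g) u_n(h) - u_n(gh) \| = 0$ for all $g, h \in G$. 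Given a finite set $F \S A$ and $\ep > 0$, choosing $n$ large enough and setting $z_g = u_n(g)$ yields both conditions of Definition~\ref{D_9719_StrictAppInn}.

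There is no real obstacle here; the lemma is essentially the "strip out Condition~(\ref{D_9719_AppRep.3})" analogue of Lemma~\ref{Lem.PZ.20200623}, and every ingredient (separability to get a cofinal sequence of finite sets, Lemma~\ref{Lem.s.a.i_hom} to pass to honest homomorphisms, semiprojectivity of $C(S^1)$ to perturb approximate unitaries to unitaries, and the $\frac{\ep}{3}$ density argument) has already appeared in the excerpt. The mildest point requiring care is making sure the diagonal choice of a single large $n$ in the converse simultaneously handles the finitely many conditions indexed by $F$ and by pairs $(g,h) \in G \times G$, which is immediate since $G$ is finite and $F$ is finite.
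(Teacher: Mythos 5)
Your proposal is correct and follows essentially the same route as the paper, which simply says the proof is the same as that of Lemma~\ref{Lem.PZ.20200623}: separability plus an exhausting sequence of finite sets for the forward direction, and lifting the unitaries of $A_\infty$ via semiprojectivity of $C(S^1)$ for the converse. Your extra use of Lemma~\ref{Lem.s.a.i_hom} to make each $g \mapsto z_n(g)$ an honest homomorphism is a harmless (indeed slightly cleaner) variant of verifying $w(g)w(h)=w(gh)$ in the limit from the approximate multiplicativity condition.
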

\begin{proof}
The proof is essentially the same as that of Lemma~\ref{Lem.PZ.20200623}.
\end{proof}
 The following definition is a characterization of strong
approximate innerness in terms of elements in $A$ instead of $A_{\infty}$.
\begin{dfn}
\label{Df_storng_App_inner}
Let $A$ be a unital \ca,
let $G$ be a finite group,
and let $\af \colon G \to \Aut (A)$
be an action of $G$ on~$A$.
We say that $\af$ is
{\emph{strongly approximately inner}}
if for every finite set $F \subset A$ and every $\ep > 0$,
there are  $z_g \in \U (A)$ for $g \in G$ such that:
\begin{enumerate}
\item \label{Df_storng_App_inner_1}
$\| \af_g (a) - z_g a z_g^* \| < \ep$ for all $g \in G$ and all $a \in F$.
\item \label{Df_storng_App_inner_2}
$\| \af_g (z_h) - z_{g h g^{-1}}\| < \ep$ for all $g, h \in G$.
\end{enumerate}
\end{dfn}
The following lemma is a generalization of Lemma~3.2 of \cite{G20} to arbitrary finite groups.
\begin{lem}
Let $A$ be a separable unital C*-algebra and let $G$ be a finite group.
Let $\alpha \colon G \to \Aut (A)$ be an action.
Then $\alpha$ is strongly approximately inner in the sense of Definition~\ref{Df_storng_App_inner} if and only if
$\alpha$ is strongly approximately inner in the sense of Definition~\ref{Df_Strog_App_Inn_IZ}.
\end{lem}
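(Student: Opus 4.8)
The statement to prove is an equivalence between two formulations of ``strongly approximately inner'': the elementwise version (Definition~\ref{Df_storng_App_inner}), which asks for unitaries $z_g \in A$ approximately implementing $\alpha_g$ and approximately satisfying the twisted cocycle identity $\alpha_g(z_h) \approx z_{ghg^{-1}}$ on finite sets, and the sequence-algebra version (Definition~\ref{Df_Strog_App_Inn_IZ}), which asks for unitaries $w(g) \in A_\infty$ exactly implementing $\alpha_{\infty,g}$ on $A$ and exactly satisfying $\alpha_{\infty,g}(w(h)) = w(ghg^{-1})$. The plan is to model the argument on the proof of Lemma~\ref{Lem.PZ.20200623}, which handled the analogous equivalence for approximate representability; the present statement is strictly easier because we no longer need to arrange that $g \mapsto w(g)$ is a genuine group homomorphism, only that each $w(g)$ is a unitary.

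For the forward implication (elementwise $\Rightarrow$ sequence-algebra), I would use separability to fix an increasing sequence of finite sets $F_1 \subseteq F_2 \subseteq \cdots$ with dense union in $A$, and for each $n$ apply Definition~\ref{Df_storng_App_inner} with $F_n$ and $\ep = \tfrac1n$ to obtain unitaries $z_n(g) \in \U(A)$ with $\|\alpha_g(a) - z_n(g) a z_n(g)^*\| < \tfrac1n$ for $a \in F_n$ and $\|\alpha_g(z_n(h)) - z_n(ghg^{-1})\| < \tfrac1n$. Setting $w(g)$ to be the image in $A_\infty$ of the bounded sequence $(z_n(g))_{n \in \N}$ gives a unitary in $A_\infty$. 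The verification that $\alpha_{\infty,g}(x) = w(g) x w(g)^*$ for all $x \in A$ is the standard $\ep/3$ argument: approximate an arbitrary $x$ by some $a \in F_m$, use the defining inequality for large $n$, and absorb the error from $\|x - a\|$ using that the $z_n(g)$ are unitaries — this is verbatim the computation displayed in the proof of Lemma~\ref{Lem.PZ.20200623}. The second condition, $\alpha_{\infty,g}(w(h)) = w(ghg^{-1})$, follows immediately from $\lim_{n\to\infty}\|\alpha_g(z_n(h)) - z_n(ghg^{-1})\| = 0$ since $\alpha_{\infty,g}$ acts coordinatewise on representing sequences.

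For the backward implication (sequence-algebra $\Rightarrow$ elementwise), I would represent each $w(g) \in A_\infty$ by a bounded sequence $(w_n(g))_{n\in\N}$ in $A$; these are asymptotically unitary, so using semiprojectivity of $C(S^1)$ (as in Lemma~\ref{Lem.PZ.20200623}) I replace them by honest unitaries $u_n(g) \in \U(A)$ with $\lim_{n\to\infty}\|u_n(g) - w_n(g)\| = 0$. The two exact conditions in $A_\infty$ translate into $\lim_{n\to\infty}\|\alpha_g(a) - u_n(g) a u_n(g)^*\| = 0$ for every $a \in A$ and $\lim_{n\to\infty}\|\alpha_g(u_n(h)) - u_n(ghg^{-1})\| = 0$; given a finite set $F$ and $\ep > 0$, choosing $n$ large enough and setting $z_g = u_n(g)$ yields the two inequalities of Definition~\ref{Df_storng_App_inner}. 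Since the whole argument is a direct transcription of the proof of Lemma~\ref{Lem.PZ.20200623} with condition~(\ref{D_9719_AppRep.2}) deleted, there is no real obstacle; the only point requiring a word of care is that $\alpha_{\infty,g}(w(h)) = w(ghg^{-1})$ must be unwound correctly for a nonabelian $G$, i.e. keeping track that the conjugation is by $g$ on the index, but this is purely bookkeeping. Accordingly the cleanest write-up simply says the proof is the same as that of Lemma~\ref{Lem.PZ.20200623}, omitting condition~(\ref{D_9719_AppRep.2}) throughout.
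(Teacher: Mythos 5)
Your proposal is correct and is essentially the paper's argument: the paper simply states that the proof is the same as that of Lemma~\ref{Lem.PZ.20200623}, and your write-up spells out exactly that transcription (diagonal sequence over increasing finite sets with $\ep=\tfrac1n$ for one direction, lifting representing sequences and correcting to honest unitaries via semiprojectivity of $C(S^1)$ for the other), with the multiplicativity condition~(\ref{D_9719_AppRep.2}) deleted and the equivariance condition $\af_g(z_h)\approx z_{ghg^{-1}}$ retained. No gaps.
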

\begin{proof}
The proof is essentially the same as that of Lemma~\ref{Lem.PZ.20200623}.
\end{proof}
Obviously, approximate representability implies strong approximate innerness
and  the converse is not true (see the following example). 
Also, we do not know whether strong approximate innerness  together with strict approximate innerness implies 
approximate representability.
Further, one must be careful about the technical differences between strong approximate innerness (Definition~\ref{Df_storng_App_inner})  
and strict approximate innerness (Definition~\ref{D_9719_StrictAppInn}).
To distinguish them from each other, we provide the following example.
\begin{exa}
\label{Eample_Iz_me}
Let $\alpha \colon \mathbb{Z}/2 \mathbb{Z} \to \Aut(\mathcal{O}_3)$ be the action as in Example~5.11 of \cite{Iz1}.
Then:
\begin{enumerate}
\item
$\alpha$ is strongly approximately inner  by Proposition~5.6(2) of \cite{Iz1}.
\item
 The dual action of $\alpha$ does not have the Rohlin property by Theorem~3.13 of \cite{Iz1}.
 Therefore, $\alpha$ is not approximately representable.
\item
$\alpha$ is not strictly approximately inner. 
Namely, if we assume that $\alpha$ is strictly approximately inner, then, by 
Theorem~\ref{WC_injectivity}(\ref{WC_injectivity_a}), the map 
\[
K_0 (\iota) \colon K_{0} (\mathcal{O}_3) \to K_0 \big(C^*(\mathbb{Z}/2 \mathbb{Z}, \mathcal{O}_3, \alpha)\big),
\]
induced by the inclusion map, is injective. On the other hand, we have 
\[
K_{0} (\mathcal{O}_3) \cong \mathbb{Z}/2 \mathbb{Z}
\quad
\mbox{ and }
\quad
K_0 \big(C^*(\mathbb{Z}/2 \mathbb{Z}, \mathcal{O}_3, \alpha)\big) \cong \mathbb{Z}.
\]
This is a contradiction.
\end{enumerate}
\end{exa}
\begin{lem}\label{Lem_inlim_in_ap}
Let $G$ be a finite group, 
let $\big((G,A_j, \alpha^{(j)})_{j \in J} , (\varphi_{j,k})_{j\leq k} \big)$ be a direct system of $G$-algebras, 
let $A$ be the direct limit of the $A_j$,
and let $\alpha \colon G \to \Aut(A)$ be the direct limit of the $\alpha^{(j)}$. 
If $\alpha^{(j)}$ is inner for each $j$, then $\alpha$ is strictly approximately inner.
\end{lem}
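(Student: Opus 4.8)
The plan is to verify Definition~\ref{D_9719_StrictAppInn} directly, using that the connecting maps of the system are $G$-equivariant unital $*$-homomorphisms, so that an honest implementing unitary representation in some $A_j$ transports to one in~$A$. Write $\varphi_{j, \infty} \colon A_j \to A$ for the canonical maps into the direct limit. Recall that $\bigcup_{j \in J} \varphi_{j, \infty}(A_j)$ is dense in~$A$, that $\varphi_{j, \infty} = \varphi_{k, \infty} \circ \varphi_{j, k}$ for $j \leq k$, and that, since $\alpha$ is the direct limit of the $\alpha^{(j)}$, we have $\varphi_{j, \infty} \circ \alpha^{(j)}_g = \alpha_g \circ \varphi_{j, \infty}$ for all $j \in J$ and $g \in G$. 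For each $j$, innerness of $\alpha^{(j)}$ means there is a homomorphism $g \mapsto v^{(j)}_g$ from $G$ to $\U(A_j)$ with $\alpha^{(j)}_g = \Ad(v^{(j)}_g)$ for all $g \in G$; fix such a homomorphism.

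Now let $F \S A$ be finite and let $\ep > 0$. First I would pick a single index: since $J$ is directed and $\bigcup_{j} \varphi_{j, \infty}(A_j)$ is dense, there are $j \in J$ and elements $b_a \in A_j$, for $a \in F$, such that $\| \varphi_{j, \infty}(b_a) - a \| < \ep / 2$ for every $a \in F$. Then set $z_g = \varphi_{j, \infty}\bigl( v^{(j)}_g \bigr)$ for $g \in G$. Since $\varphi_{j, \infty}$ is a unital $*$-homomorphism it carries the unitary representation $g \mapsto v^{(j)}_g$ to a homomorphism $g \mapsto z_g$ from $G$ to $\U(A)$; in particular $z_g z_h = z_{gh}$ for all $g, h \in G$, so Condition~(\ref{D_9719_StrictAppInn.2}) of Definition~\ref{D_9719_StrictAppInn} holds exactly.

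It then remains to check Condition~(\ref{D_9719_StrictAppInn.1}). The key identity is that, by $G$-equivariance of $\varphi_{j, \infty}$ and $\alpha^{(j)}_g = \Ad(v^{(j)}_g)$,
\[
\alpha_g\bigl( \varphi_{j, \infty}(b_a) \bigr)
 = \varphi_{j, \infty}\bigl( \alpha^{(j)}_g(b_a) \bigr)
 = \varphi_{j, \infty}\bigl( v^{(j)}_g \, b_a \, (v^{(j)}_g)^* \bigr)
 = z_g \, \varphi_{j, \infty}(b_a) \, z_g^*
\]
for all $g \in G$ and $a \in F$. Since $\alpha_g$ and $\Ad(z_g)$ are isometric, the triangle inequality then gives
\[
\bigl\| \alpha_g(a) - z_g a z_g^* \bigr\|
 \leq \bigl\| a - \varphi_{j, \infty}(b_a) \bigr\|
   + \bigl\| \varphi_{j, \infty}(b_a) - a \bigr\|
 < \ep
\]
for all $g \in G$ and $a \in F$. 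Hence $(z_g)_{g \in G}$ witnesses that $\alpha$ is strictly approximately inner for the data $F$, $\ep$; equivalently, the map $g \mapsto z_g$ is exactly the homomorphism $G \to \U(A)$ required by Lemma~\ref{Lem.s.a.i_hom}.

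The argument has essentially no serious obstacle: the only points needing attention are that one can pick an index $j$ large enough to approximate all of $F$ at once (here one uses that the index set is directed) and that innerness of $\alpha^{(j)}$ is witnessed by an actual unitary representation of $G$ inside~$A_j$, which then pushes forward along the unital $*$-homomorphism $\varphi_{j, \infty}$; after that, Condition~(\ref{D_9719_StrictAppInn.2}) holds on the nose and Condition~(\ref{D_9719_StrictAppInn.1}) is a one-line estimate.
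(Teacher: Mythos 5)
Your proposal is correct and follows essentially the same route as the paper's own proof: approximate the finite set $F$ by elements of a single $A_j$, use innerness of $\alpha^{(j)}$ to obtain a homomorphism from $G$ into $\U(A_j)$ implementing it, push the unitaries forward along $\varphi_{j,\infty}$, and conclude Condition~(\ref{D_9719_StrictAppInn.1}) by the same triangle-inequality estimate while Condition~(\ref{D_9719_StrictAppInn.2}) holds exactly. No gaps.
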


\begin{proof}
Let $\ep > 0$, let $m \in \N$, and let $a_1, a_2, \ldots, a_m \in A$.
Choose $n \in \N$
and $b_1, b_2, \ldots, b_m \in A_n$
such that, for $j = 1, 2, \ldots, m$,
\begin{equation}\label{Eq1.24.07.19}
\| \varphi_{\infty, n} (b_j) - a_j \| < \frac{\ep}{2}.
\end{equation} 
Since $\alpha^{(n)}$ is inner,
there is a homomorphism $g \mapsto z_g$ from $G$ to  $\U (A_n)$ 
such that, for all $j\in \{ 1,2,\cdots, n\}$ and all $g \in G$,
\begin{equation}\label{Eq2.24.07.19}
\alpha^{(n)}_g (b_j) = z_g b_j z^*_g.
\end{equation}
Define $w_g = \varphi_{\I, n} (z_g)$. Therefore, 
using $\varphi_{\infty, n} \circ \alpha^{(n)}_g = \alpha_g \circ \varphi_{\infty, n}$ 
and (\ref{Eq2.24.07.19}) at the first  step and using (\ref{Eq1.24.07.19}) at the second step,
\begin{align*}
\| \alpha_g (a_j) - w_g a_j w^*_g\|
&\leq
\| \alpha_g (a_j) - \alpha_g (\varphi_{\infty, n} (b_j))\|
\\& \qquad +
\| \varphi_{\infty, n} (z_g) \varphi_{\infty, n} (b_j) \varphi_{\infty, n} (z^*_g)
-
w_g a_j w^*_g
 \|
\\\notag
&< \frac{\ep}{2} + \frac{\ep}{2} = \ep.
\end{align*}
This completes the proof.
\end{proof}
\begin{ntn}\label{N_9408_StdNotation_CP}
Let $A$ be a unital C*-algebra and let $\af \colon G \to \Aut (A)$ be an action
of a finite group $G$ on $A$. 
For $g \in G$,
we let $u_g$ be the element of $C_{\mathrm{c}} (G, A, \af)$
which takes the value $1$ at $g$
and $0$ at the other elements of~$G$.
We use the same notation for its image in $C^* (G, A, \af)$.
Also, for each $g \in G$, we define the map 
$ E_g \colon  \CGAa \to A$ by 
\[
E_g (a) = a_g,
\]
where $a= \sum_{g \in G} a_g u_g$.
We also denote by $A^{\alpha}$ the fixed
point algebra, given by
\[
A^{\alpha} = \big\{ a \in A \colon
\alpha_g (a) = a \mbox{ for all } g \in G \big\}.
\]
\end{ntn}
It is known that the Rokhlin property allows an
approximate homomorphism from the C*-algebra into the fixed point algebra
and 
approximate representability allows an approximate homomorphism from 
the crossed product into the C*-algebra in 
the more  general setting of finite groups 
(see \cite{BSV17, G19}).
  Now, in the following lemma, we show that strict approximate innerness, which is a weakening of approximate representability,
  suffices to provide 
 an approximate homomorphism from 
the crossed product into the C*-algebra in the setting of finite groups.
\begin{lem}\label{Prp_magic_a}
 Let $A$ be a unital \ca{} and let  $\alpha \colon  G \to \Aut(A)$ be an action of a 
finite  group $G$ on $A$ which is  strictly approximately inner. 
Then  for every finite
set $F \subset \CGAa$ and every $\ep > 0$, there exists a unital surjective linear map
$\psi \colon \CGAa \to A$ such that:
\begin{enumerate}
\item \label{Prp_magic_a.1}
$\| \psi \| \leq \card (G)$.
\item \label{Prp_magic_a.2}
$\psi (a) = a$ for all $a \in A$.
\item \label{Prp_magic_a.3}
$\| \psi (xy z^*) - \psi (x) \psi (y) \psi (z)^*\| < \ep$ for all $x, y, z \in F$.
\setcounter{TmpEnumi}{\value{enumi}}
\end{enumerate}
\end{lem}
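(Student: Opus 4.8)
The plan is to define $\psi$ by ``substituting $u_g \mapsto z_g$'', where $g \mapsto z_g$ is an \emph{honest} group \hm{} from $G$ to $\U (A)$ that approximately implements $\af$ on a suitable finite subset of~$A$. Since $G$ is finite, every $x \in \CGAa$ is uniquely of the form $x = \sum_{g \in G} E_g (x) u_g$ with $E_g (x) \in A$, and $E_g = E(\, \cdot \, u_g^*)$, where $E \colon \CGAa \to A$ is the canonical conditional expectation; in particular each $E_g$ is linear and contractive. I would then set $\psi (x) = \sum_{g \in G} E_g (x) z_g$. This $\psi$ is linear, and $\psi (a u_g) = a z_g$ for $a \in A$; since an honest homomorphism sends the identity of $G$ to $1 \in A$, this yields $\psi (a) = a$ for $a \in A$, hence also~(\ref{Prp_magic_a.2}), unitality, and surjectivity, while $\| \psi (x) \| \leq \sum_{g \in G} \| E_g (x) \| \leq \card (G) \| x \|$ yields~(\ref{Prp_magic_a.1}). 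So only~(\ref{Prp_magic_a.3}) requires work.

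For~(\ref{Prp_magic_a.3}) I would first fix the data that $\psi$ is allowed to depend on: put $M = \max \bigl( \{ 1 \} \cup \{ \| E_g (x) \| \colon x \in F, \ g \in G \} \bigr)$ and $F' = \{ E_g (x) \colon x \in F, \ g \in G \} \S A$, and choose $\ep' > 0$ with $2 \, \card (G)^3 M^2 \ep' < \ep$. By strict approximate innerness in the homomorphism form of Lemma~\ref{Lem.s.a.i_hom}, there is a \hm{} $g \mapsto z_g$ from $G$ to $\U (A)$ with $\| \af_g (b) - z_g b z_g^* \| < \ep'$ for all $g \in G$ and all $b \in F'$; I would use this $z$ to build $\psi$. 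The reason for using the \emph{homomorphism} version, rather than the approximately multiplicative unitaries of Definition~\ref{D_9719_StrictAppInn}, is that it makes $z_g z_h = z_{gh}$ and $z_k^* = z_{k^{-1}}$ hold exactly, so no extra errors appear when the group law is carried out inside the crossed product.

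The heart of the proof is then the estimate~(\ref{Prp_magic_a.3}) itself. Write $x = \sum_g a_g u_g$, $y = \sum_h b_h u_h$, $z = \sum_k c_k u_k$ with $a_g = E_g (x)$, $b_h = E_h (y)$, $c_k = E_k (z)$, so these elements lie in $F'$ and have norm $\leq M$. A direct computation in $\CGAa$ gives $xyz^* = \sum_{g,h,k} a_g \af_g (b_h) \af_{ghk^{-1}} (c_k^*) \, u_{ghk^{-1}}$, whence
\[
\psi (xyz^*) = \sum_{g,h,k \in G} a_g \af_g (b_h) \af_{ghk^{-1}} (c_k^*) \, z_{ghk^{-1}}
\andeqn
\psi (x) \psi (y) \psi (z)^* = \sum_{g,h,k \in G} a_g \, z_g b_h z_{hk^{-1}} \, c_k^* .
\]
In each summand I would rewrite $z_g b_h z_{hk^{-1}} c_k^* = (z_g b_h z_g^*) \bigl( z_{ghk^{-1}} c_k^* z_{ghk^{-1}}^* \bigr) z_{ghk^{-1}}$ and replace $z_g b_h z_g^*$ by $\af_g (b_h)$ and $z_{ghk^{-1}} c_k^* z_{ghk^{-1}}^*$ by $\af_{ghk^{-1}} (c_k^*)$, each replacement costing at most $\ep'$ since $b_h, c_k \in F'$. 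This bounds the difference of corresponding summands by $\| a_g \| \cdot 2 M \ep' \leq 2 M^2 \ep'$, and summing the $\card (G)^3$ terms gives $\| \psi (xyz^*) - \psi (x) \psi (y) \psi (z)^* \| \leq 2 \, \card (G)^3 M^2 \ep' < \ep$.

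The only genuinely delicate point is the bookkeeping in this last computation: getting the expansions of $xyz^*$ and of the triple product right and tracking which automorphism $\af_s$ occurs where, together with the realization that the homomorphism form of strict approximate innerness is precisely what eliminates the ``$z_g z_h \approx z_{gh}$'' error terms. Everything else --- linearity, the norm bound, $\psi|_A = \id_A$, unitality, surjectivity --- is immediate from the definition of $\psi$ and contractivity of the maps $E_g$.
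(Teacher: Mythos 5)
Your proposal is correct and takes essentially the same route as the paper: define $\psi$ by substituting an honest homomorphism $g \mapsto z_g$ from Lemma~\ref{Lem.s.a.i_hom} for the $u_g$, expand $xyz^*$ and $\psi(x)\psi(y)\psi(z)^*$, and control the difference by the same two conjugation replacements. The only cosmetic difference is that the paper also puts the adjoints $E_g(v)^*$ into the finite set, whereas you get the needed estimate for $c_k^*$ for free by taking adjoints of $\|\af_s(c_k) - z_s c_k z_s^*\| < \ep'$, so nothing is lost.
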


\begin{proof}
Let $\ep > 0$ and let $F \subset \CGAa$ be a finite set.
Set 
\[
F_0= \Big\{ E_g (v), E_g (v)^* \colon g \in G \mbox{ and } v \in F \Big\},
\]
\[
M= \max \Big(1, \ \max_{a \in F_0}   \| a \|  \Big),
\qquad
\mbox{ and }
\qquad
\ep'= \frac{\ep}{2 M^2 \card (G)^3}.
\]  
Applying \Lem{Lem.s.a.i_hom} with $\ep'$ in place of
$\ep$ and $F_0$ as given, we get a \hm{} $g \mapsto z_g$
from $G$ to $U (A)$ such that, for all $g \in G$ and $a \in F_0$,
\begin{equation}\label{Ap_inner.a}
\| \af_g (a) - z_g a z_g^* \| < \ep'.
\end{equation}
Now define $\psi \colon \CGAa \to A$ by 
\[
\sum_{g \in G} a_g u_g \mapsto \sum_{g \in G } a_g z_g .
\]
Clearly $\psi$ is unital, linear, and surjective.
 To prove (\ref{Prp_magic_a.1}), let
 $\sum_{g\in G} a_g u_g$ be an arbitrary element in $\CGAa$. Then
\begin{align*}
\Bigg\| \psi \Bigg(\sum_{g\in G} a_g u_g \Bigg)\Bigg\|
= 
\Bigg\| \sum_{g \in G }  a_g z_g \Bigg\| 
\leq 
\sum_{g\in G} \|  a_g \| 
\leq 
\card (G) \Bigg\| \sum_{g\in G} a_g u_g \Bigg\|.
\end{align*}
Part (\ref{Prp_magic_a.2}) is immediate from the fact that $z_1=1_A$. 
To prove (\ref{Prp_magic_a.3}), let
\[
x=\sum_{g \in G} a_g u_g, 
\quad
y= \sum_{h\in G} b_h u_h, 
\quad
\mbox{and} 
\quad
z= \sum_{t\in G} c_t u_t
\]
 be arbitrary elements in $F$.
So 
$z^* =\sum_{t\in G} \alpha_{t^{-1}} (c^*_t) u_{t^{-1}}$. 
A short computation then shows that  
\begin{align}\label{Eq1100.190708}
\psi( xyz^* )  
=
 \sum_{g,h, t  \in G } a_g \alpha_{g} (b_h) \alpha_{ght^{-1}} (c^*_t) z_{gh t^{-1}}
\end{align}
and 
\begin{equation}\label{Eq11100.190708}
\psi(x) \psi(y) \psi(z)^*
=
  \sum_{g,h, t \in G } a_g z_g b_h z_h z^{*}_{t} c^{*}_{t}.
\end{equation}
Therefore, using (\ref{Eq1100.190708}) and (\ref{Eq11100.190708}) at the first step and
using (\ref{Ap_inner.a})  at the second step,
\begin{align*}
&\| \psi (xy z^*) - \psi (x) \psi(y) \psi(z)^*\| 
\\
&\hspace*{3em} {\mbox{}}\leq
\sum_{g,h, t  \in G} 
\|  a_g \| \cdot \| \alpha_{g} (b_h) -  z_g b_h z^*_g \| \cdot \|\alpha_{ght^{-1}} (c^{*}_t) z_{ght^{-1}} \|
\\&
\hspace*{6em} {\mbox{}}+
\sum_{g,h, t  \in G}
 \|  a_g z_g b_h z^*_g \| \cdot \| \alpha_{ght^{-1}} (c^{*}_{t})  -  z_{ght^{-1}} c^{*}_{t} z^*_{ght^{-1}} \| \cdot \| z_{ght^{-1}} \|
\\
&\hspace*{3em} {\mbox{}}
<
 \card(G)^3 M^2 \ep' + \card(G)^3 M^2 \ep' = 2 \card(G)^3 M^2 \ep' =\ep.
\end{align*}
This completes the proof.
\end{proof}
\begin{prp}\label{Pr_Magic_a}
Let $A$ be a unital \ca{} and let  $\alpha \colon  G \to \Aut(A)$ be an action of a 
finite  group $G$ on $A$ which is  strictly approximately inner. Assume $a, b \in A_+$.
Then $a \precsim_{A} b$ if and only if $a \precsim_{\CGAa} b$. 
\end{prp}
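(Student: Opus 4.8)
The plan is to prove the two implications separately. The direction $a \precsim_A b \Rightarrow a \precsim_{\CGAa} b$ is immediate: if $a \precsim_A b$, then there is a sequence $(c_n)$ in $\Mi(A)$ with $c_n b c_n^* \to a$; since $\iota \colon A \to \CGAa$ is an inclusion of C*-algebras, the same sequence (viewed inside $\Mi(\CGAa)$) witnesses $a \precsim_{\CGAa} b$. Equivalently, this is just functoriality: $\W(\iota)\langle a\rangle_A \leq \W(\iota)\langle b\rangle_A$.

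For the nontrivial direction, suppose $a \precsim_{\CGAa} b$. By \Lem{PhiB.Lem_18_4}(\ref{PhiB.Lem_18_4_11}), it suffices to fix $\ep > 0$ and produce $\dt > 0$ with $(a - \ep)_+ \precsim_A (b - \dt)_+$; actually it will be cleaner to aim at $(a - \ep)_+ \precsim_A b$ for every $\ep > 0$, again invoking \Lem{PhiB.Lem_18_4}(\ref{PhiB.Lem_18_4_11}). From $a \precsim_{\CGAa} b$ and \Lem{PhiB.Lem_18_4}(\ref{PhiB.Lem_18_4_11.a})$\Rightarrow$(\ref{PhiB.Lem_18_4_11.c}) we get $\dt > 0$ and an element $c$ in (a matrix algebra over) $\CGAa$ with
\[
\big\| c\, (b - \dt)_+\, c^* - (a - \tfrac{\ep}{2})_+ \big\| < \tfrac{\ep}{4}.
\]
Now apply \Lem{Prp_magic_a} with a suitably large finite set $F$ — containing $c$, $(b-\dt)_+$, $c^*$, and the relevant products — and a small tolerance, to obtain a unital linear map $\psi \colon \CGAa \to A$ (extended entrywise to matrix algebras, with norm at most $\card(G)$) that is approximately multiplicative on $F$ and restricts to the identity on $A$. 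Applying $\psi$ to the displayed inequality, using $\psi((a - \frac{\ep}{2})_+) = (a - \frac{\ep}{2})_+$ (it lies in $A$) and the approximate-multiplicativity estimate from \Lem{Prp_magic_a}(\ref{Prp_magic_a.3}), gives
\[
\big\| \psi(c)\, (b-\dt)_+\, \psi(c)^* - (a - \tfrac{\ep}{2})_+ \big\| < \tfrac{\ep}{2},
\]
with $\psi(c) \in \Mi(A)$. By \Lem{PhiB.Lem_18_4}(\ref{PhiB.Lem_18_4_10}), this yields $(a - \ep)_+ = \big((a - \frac{\ep}{2})_+ - \frac{\ep}{2}\big)_+ \precsim_A \psi(c)(b-\dt)_+\psi(c)^* \precsim_A (b - \dt)_+ \precsim_A b$ (using \Lem{PhiB.Lem_18_4}(\ref{PhiB.Lem_18_4_8}) for the first equality and transitivity of $\precsim_A$). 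Letting $\ep \to 0$ and using \Lem{PhiB.Lem_18_4}(\ref{PhiB.Lem_18_4_11.b})$\Rightarrow$(\ref{PhiB.Lem_18_4_11.a}) gives $a \precsim_A b$.

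The main obstacle is bookkeeping rather than conceptual: one must choose the finite set $F$ and the error tolerance fed to \Lem{Prp_magic_a} \emph{before} knowing $\psi$, so the argument has to be organized so that the only elements whose products need to be controlled are $c$, $(b-\dt)_+$, and $c^*$ (and their obvious combinations), which is exactly the shape of the conclusion of \Lem{Prp_magic_a}. A secondary point is that \Lem{Prp_magic_a} is stated for $\CGAa$ itself, so one should first note that the analogous statement holds for $M_k \otimes \CGAa = C^*(G, M_k \otimes A, \id \otimes \alpha)$ — the induced action $\id \otimes \alpha$ is again strictly approximately inner (conjugate by $1 \otimes z_g$) — and then apply the lemma there, since $a, b$ and the witnessing element live in a matrix amplification. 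With those two observations in place the estimates are routine.
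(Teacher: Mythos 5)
Your proposal follows essentially the same route as the paper: the forward direction is functoriality, and for the converse you push an approximate Cuntz witness into $A$ via the approximately multiplicative map $\psi$ of Lemma~\ref{Prp_magic_a}, using $\psi|_A = \id$ and $\| \psi \| \leq \card(G)$, exactly as the paper does (which works with $b$ directly rather than $(b-\dt)_+$ and takes the witness $v$ in $\CGAa$ itself, so no matrix amplification is needed). The only correction is the constant: since applying $\psi$ multiplies your first error by up to $\card(G)$, the initial tolerance should be $\ep/(4\card(G))$ rather than $\ep/4$ (the paper uses $\ep/(2\card(G))$), which is precisely the bookkeeping issue you flagged.
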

\begin{proof}
We only need to prove the backwards implication. Let $\ep> 0$. Let $a, b \in A_+$.
By Lemma~\ref{PhiB.Lem_18_4}(\ref{PhiB.Lem_18_4_11}), it suffices to show that
 $(a - \ep)_+ \precsim_{A} b$.
We use $a  \precsim_{C^*(G, A, \alpha)} b$  to find $v \in C^*(G, A, \alpha)$ such that 
\begin{equation}\label{Eq1.2019.08.28}
\left\| v b v^* - a \right\| < \tfrac{\ep}{2 \card (G)}.
\end{equation}
Set 
$F= \big\{b,  v, v^* \big\}$. 
Now, we apply Lemma~\ref{Prp_magic_a} with $\frac{\ep}{2}$ and $F$ as given to get a unital surjective linear map 
$\psi \colon \CGAa \to A $ such that:
\begin{enumerate}
\item \label{Pr_Magic_a.1}
$\| \psi \| \leq \card (G)$.
\item \label{Pr_Magic_a.2}
$\psi (c) = c$ for all $c \in A$.
\item \label{Pr_Magic_a.3}
$\left\| \psi (xy z^*) - \psi (x) \psi (y) \psi (z)^*\right\| < \frac{\ep}{2}$ for all $x, y, z \in F$.
\end{enumerate}
Using (\ref{Pr_Magic_a.2}) at the first step, 
using (\ref{Eq1.2019.08.28}) and (\ref{Pr_Magic_a.3}) at the second step, 
and using (\ref{Pr_Magic_a.1}) at the last step, we get
\begin{align}\label{Eq1.20200501}
\left\|a - \psi (v) b \psi (v)^* \right\|
&\leq
\left\| \psi \left(a\right) - \psi \left( v b v^*\right) \right\|
+
\left\| \psi \left( v b v^* \right) - \psi ( v ) \psi (b)  \psi ( v)^* \right\|
\\\notag
&<
  \frac{\ep \| \psi \|}{2 \card (G)}
+
\frac{\ep}{2} \leq \ep.
\end{align}
Therefore,  using (\ref{Eq1.20200501}) and 
Lemma~\ref{PhiB.Lem_18_4}(\ref{PhiB.Lem_18_4_10.a}) at the first step,
\[
(a - \ep)_+ \precsim_{A} \psi ( v ) b  \psi ( v)^*  \precsim_A b.
\]
This completes the proof.
\end{proof}
\begin{rmk}\label{Exten_Ap_in}
Let $A$ be a unital \ca, let $n \in \N$ and let  $\alpha \colon  G \to \Aut(A)$ be an action of a 
finite  group $G$ on $A$ which is  strictly approximately inner. 
Then it is easy to check that the action $\id_{M_n} \otimes \alpha \colon G \to \Aut (M_n (A))$
 is also strictly approximately inner.
\end{rmk}
\begin{thm}\label{WC_injectivity}
Let $A$ be a unital \ca{} and let $G$ be a finite group. Let $\alpha \colon G \to \Aut(A)$
be a strictly  approximately inner action.
Let $\iota \colon A \to \CGAa$ be the inclusion map.
Then:
\begin{enumerate}
\item\label{WC_injectivity_a}
The map $\W (\iota) \colon \W (A) \to \W (\CGAa)$
is an ordered semigroup isomorphism onto its range.
\item\label{WC_injectivity_b}
The map $\Cu (\iota) \colon \Cu (A) \to \Cu (\CGAa)$
is an ordered semigroup isomorphism onto its range.
\end{enumerate}
\end{thm}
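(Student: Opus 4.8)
The plan is to deduce the theorem from Proposition~\ref{Pr_Magic_a}. By Definition~\ref{Cuntz_def_property} both $\W(\iota)$ and $\Cu(\iota)$ are automatically order-preserving semigroup homomorphisms, so the only content is that each \emph{reflects} the order; since the orders on $\W$ and on $\Cu$ are partial orders, an order-preserving map that reflects the order is automatically injective and restricts to an order isomorphism onto its range. Thus everything reduces to the following two comparison statements, where (i) yields part~(\ref{WC_injectivity_a}) and (ii) yields part~(\ref{WC_injectivity_b}): \emph{(i)} for every $n \in \N$ and all $a,b \in M_n(A)_+$ one has $a \precsim_A b$ if and only if $\iota(a) \precsim_{\CGAa} \iota(b)$; \emph{(ii)} the same equivalence holds for all $a,b \in (\cK \otimes A)_+$. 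In each case the conclusion follows from the comparison statement because any two elements of $M_\infty(A)_+$ (resp.\ of $(\cK\otimes A)_+$) are compared level by level.

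For (i), the forward implication is just continuity of $\iota$. For the converse, I would first record the standard compression principle: for $a,b \in M_n(A)_+$, a sequence $(c_k)$ in $M_\infty(A)$ with $c_k b c_k^* \to a$ may be replaced by $(e c_k e)$, where $e = 1_{M_n}$, so that $a \precsim_A b \iff a \precsim_{M_n(A)} b$, and likewise $\iota(a) \precsim_{\CGAa} \iota(b) \iff \iota(a) \precsim_{M_n(\CGAa)} \iota(b)$. By Remark~\ref{Exten_Ap_in} the action $\id_{M_n} \otimes \alpha$ on $M_n(A)$ is strictly approximately inner, and the canonical isomorphism $M_n(\CGAa) \cong C^*(G, M_n(A), \id_{M_n}\otimes\alpha)$ carries the subalgebra $M_n(A) \subseteq M_n(\CGAa)$ onto the canonical copy of $M_n(A)$ in the crossed product. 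Hence Proposition~\ref{Pr_Magic_a}, applied to $(M_n(A), \id_{M_n}\otimes\alpha)$, gives exactly $a \precsim_{M_n(A)} b \iff \iota(a) \precsim_{M_n(\CGAa)} \iota(b)$, which proves (i).

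For (ii), again only the reverse implication is at issue; assume $\iota(a) \precsim_{\CGAa} \iota(b)$ in $\cK \otimes \CGAa$. By Lemma~\ref{PhiB.Lem_18_4}(\ref{PhiB.Lem_18_4_11}) it suffices to show $(a - \ep)_+ \precsim_A b$ for every $\ep > 0$. Fix $\ep > 0$, write $p_m$ for the unit of $M_m(A) \subseteq \cK \otimes A$, and choose $m$ with $\| p_m a p_m - a \| < \ep$, so that $(a - \ep)_+ \precsim_A a' := p_m a p_m \in M_m(A)_+$ by Lemma~\ref{PhiB.Lem_18_4}(\ref{PhiB.Lem_18_4_10.a}), while $\iota(a') = p_m \iota(a) p_m \precsim_{\CGAa} \iota(a) \precsim_{\CGAa} \iota(b)$. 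It therefore suffices to prove $a' \precsim_A b$, and by Lemma~\ref{PhiB.Lem_18_4}(\ref{PhiB.Lem_18_4_11}) again it suffices to prove $(a' - \ep')_+ \precsim_A b$ for every $\ep' > 0$. Fix $\ep' > 0$; by Lemma~\ref{PhiB.Lem_18_4}(\ref{PhiB.Lem_18_4_11.c}) there is $\dt > 0$ with $\iota\big((a' - \ep')_+\big) \precsim_{\CGAa} \iota\big((b - \dt)_+\big)$, and choosing $m'$ with $\| p_{m'} b p_{m'} - b \| < \dt$ gives $(b - \dt)_+ \precsim_A b' := p_{m'} b p_{m'} \in M_{m'}(A)_+$, so $\iota\big((a' - \ep')_+\big) \precsim_{\CGAa} \iota(b')$. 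Since $(a' - \ep')_+$ and $b'$ both lie in $M_N(A)_+$ with $N = \max(m, m')$, the compression principle puts this subequivalence inside $M_N(\CGAa)$, and (i) with $N$ in place of $n$ gives $(a' - \ep')_+ \precsim_A b' \precsim_A b$. This completes (ii), hence the theorem.

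The only substantial ingredient is Proposition~\ref{Pr_Magic_a}, which is already available; the rest is bookkeeping. The step I expect to require the most care is the matrix-level reduction in~(ii): Cuntz comparison must be shuttled repeatedly between $\cK \otimes A$ (resp.\ $\cK \otimes \CGAa$) and the finite matrix algebras, which is exactly why the cut-downs $(x - \ep)_+ \precsim_A p_m x p_m$ and the cofinality criterion Lemma~\ref{PhiB.Lem_18_4}(\ref{PhiB.Lem_18_4_11}) are invoked several times, and why one must check that the identification $M_n(\CGAa) \cong C^*(G, M_n(A), \id_{M_n}\otimes\alpha)$ is compatible with the relevant inclusions of $A$ and $M_n(A)$.
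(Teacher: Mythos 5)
Your proposal is correct and follows essentially the same route as the paper: part (a) comes from Remark~\ref{Exten_Ap_in} together with Proposition~\ref{Pr_Magic_a} applied to $\id_{M_n}\otimes\alpha$, and part (b) is reduced to part (a) by an $\ep$--$\dt$ passage to finite matrix levels via Lemma~\ref{PhiB.Lem_18_4}. The only (cosmetic) difference is that you realize the matrix-level approximants as corner cut-downs $p_m x p_m$, whereas the paper picks arbitrary approximants $a_0, b_0 \in M_m(A)_+$; the bookkeeping is otherwise the same.
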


\begin{proof}
We only need to show injectivity and order isomorphism in both parts.

Part (\ref{WC_injectivity_a}) is essentially immediate from Remark~\ref{Exten_Ap_in} and Proposition~\ref{Pr_Magic_a}.

We prove~(\ref{WC_injectivity_b}).
We must show that if $a, b \in ( \cK \otimes A)_{+}$
satisfy $a \precsim_{\CGAa} b$,
then $a \precsim_{A} b$.
Let $\ep > 0$.
By Lemma~\ref{PhiB.Lem_18_4}(\ref{PhiB.Lem_18_4_11}), it suffices to prove that
$(a - \ep)_{+} \precsim_{A} b$.
Applying Lemma~\ref{PhiB.Lem_18_4}(\ref{PhiB.Lem_18_4_11.c}) with $\frac{\ep}{3}$ in place of $\eta$, we choose $\dt > 0$ such that
\begin{equation}\label{Eq1_2019_04_14}
\left( a - \frac{\ep}{3} \right)_{+} \precsim_{\CGAa} (b - \dt)_{+}.
\end{equation}
Choose $m, n \in \N$,
$a_0 \in M_m (A)_+$, and $b_0 \in M_n (A)_+$
such that
\[
\bigg\| a_0 - \Big( a - \frac{\ep}{3} \Big)_{+} \bigg\|
   < \frac{\ep}{3}
\andeqn
\| b_0 - b \|
   < \frac{\dt}{2}.
\]
It follows from Lemma \ref{PhiB.Lem_18_4}(\ref{Item_9420_LgSb_1_6})
that
\begin{equation}\label{Eq_9417_FromIneq}
(a - \ep)_{+}
 \precsim_{A} \Big( a_0 - \frac{\ep}{3} \Big)_{+}
 \precsim_{A} \Big( a - \frac{\ep}{3} \Big)_{+}
\end{equation}
and
\begin{equation}\label{Eq_9417_FromIneq_2}
(b - \dt)_{+}
 \precsim_{A} \Big( b_0 - \frac{\dt}{2} \Big)_{+}
 \precsim_{A} b.
\end{equation}
Combining the second part of~(\ref{Eq_9417_FromIneq}),
the first part of~(\ref{Eq_9417_FromIneq_2}),
and~(\ref{Eq1_2019_04_14}),
we get
\begin{equation}\label{Eq_9417_InA}
\Big( a_0 - \frac{\ep}{3} \Big)_{+}
  \precsim_{\CGAa} \Big( b_0 - \frac{\dt}{2} \Big)_{+}.
\end{equation}
Since
$a_0, b_0 \in \bigcup_{l = 1}^{\I} M_l (A)$,
Part~(\ref{WC_injectivity_a})
implies that (\ref{Eq_9417_InA})
holds in $A$.
Combining this with
the first part of~(\ref{Eq_9417_FromIneq})
and the second part of~(\ref{Eq_9417_FromIneq_2}),
we get $(a - \ep)_{+} \precsim_{A} b$.
\end{proof}
We will give an example which satisfies the hypotheses 
of Theorem~\ref{WC_injectivity} in Section~\ref{Sec_Example} (see Corollary~\ref{C_9422_IsomsInExample}).
\begin{dfn}
 An ordered semigroup $S$ is said to be almost unperforated if for every
$x, y \in S$ and $n \in N$ such that $(n+1) x \leq n y$, then $x \leq y$. 
\end{dfn}
The following corollary is immediate from Theorem~\ref{WC_injectivity} and the fact that 
almost unperforation passes to sub-semigroups (with the induced order).
\begin{cor}
Let $A$ be a unital \ca{} and let $G$ be a finite group. Let $\alpha \colon G \to \Aut(A)$
be a strictly  approximately inner action.
If $\Cu (C^*(G, A, \alpha))$ is almost unperforated, then so is $\Cu (A)$.
\end{cor}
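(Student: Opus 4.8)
The plan is to deduce this directly from Theorem~\ref{WC_injectivity}(\ref{WC_injectivity_b}). First I would recall that, by definition, almost unperforation of an ordered semigroup $S$ is checked solely by relations of the form $(n+1)x \leq ny$: whenever such a relation holds in $S$, one must conclude $x \leq y$ in $S$. So I would start with arbitrary $x, y \in \Cu (A)$ and $n \in \N$ satisfying $(n+1) x \leq n y$ in $\Cu (A)$. Since $\Cu (\iota)$ is an ordered semigroup homomorphism (indeed an ordered semigroup isomorphism onto its range by Theorem~\ref{WC_injectivity}(\ref{WC_injectivity_b})), applying it gives $(n+1) \Cu (\iota) (x) \leq n \Cu (\iota) (y)$ in $\Cu ( \CGAa )$.

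Next I would invoke the hypothesis that $\Cu ( \CGAa )$ is almost unperforated, which yields $\Cu (\iota) (x) \leq \Cu (\iota) (y)$ in $\Cu ( \CGAa )$. The one point that requires care --- and this is exactly where Theorem~\ref{WC_injectivity}(\ref{WC_injectivity_b}) does the real work --- is that $\Cu (\iota)$ is not merely order preserving but is an isomorphism onto its range, hence reflects the order: for $a, b \in ( \cK \otimes A)_{+}$ one has $a \precsim_A b$ if and only if $a \precsim_{\CGAa} b$. Consequently $\Cu (\iota) (x) \leq \Cu (\iota) (y)$ forces $x \leq y$ in $\Cu (A)$, which is what we wanted, so $\Cu (A)$ is almost unperforated.

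There is essentially no obstacle here; all of the content sits in Theorem~\ref{WC_injectivity}, whose proof rests in turn on the approximate homomorphism $\psi$ produced in Lemma~\ref{Prp_magic_a}. The only thing one genuinely must watch is the direction of the order implication: a general injective semigroup homomorphism respecting order need not reflect it, so the ``isomorphism onto its range'' part of the statement, rather than just injectivity, is essential. Phrased abstractly, the argument shows that almost unperforation passes to any sub-ordered-semigroup equipped with the induced order, and $\Cu (\iota)$ realizes $\Cu (A)$ as precisely such a sub-semigroup of $\Cu ( \CGAa )$; I would present the corollary in this form for brevity.
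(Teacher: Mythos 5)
Your argument is correct and is exactly the paper's: the paper deduces the corollary immediately from Theorem~\ref{WC_injectivity} together with the observation that almost unperforation passes to sub-semigroups with the induced order, which is precisely the abstract formulation you give at the end. Your write-up merely spells out the order-reflection step that the ``isomorphism onto its range'' clause of Theorem~\ref{WC_injectivity}(\ref{WC_injectivity_b}) provides.
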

We give a lower bound and an upper bound for 
the radius of comparison of the crossed product by a strictly approximately inner action in the following theorem.
\begin{thm}\label{Thm_rc_In}
Let $A$ be a  stably finite unital \ca{} and let  $\alpha \colon  G \to \Aut(A)$ be a strictly approximately inner action of a 
finite  group $G$ on $A$. Then:
\begin{enumerate}
\item\label{Thm_rc_In.a}
$\rc (A) \leq \rc \big( \CGAa\big)$.
\item\label{Thm_rc_In.b}
If $C^*(G, A, \alpha)$ is simple, then $\rc (A) \leq \rc \big(C^*(G, A, \alpha)\big) \leq \rc (A^{\alpha})$.
\end{enumerate}
\end{thm}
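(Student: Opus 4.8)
The plan is to deduce both inequalities from the Cuntz-semigroup injectivity results already in hand, together with the algebraic reformulation of the radius of comparison in \Prp{rc_alg_def} (for the simple case) and the corner-comparison bound in \Thm{Ourcornertheorem}. For part~(\ref{Thm_rc_In.a}), I would argue directly from the definition of $r$-comparison: suppose $\CGAa$ has $r$-comparison, and let $a, b \in \Mi(A)_+$ satisfy $d_\rho(a) + r < d_\rho(b)$ for all $\rho \in \QT(A)$; I want $a \precsim_A b$. Since $\alpha$ is strictly approximately inner, $\id_{M_\infty} \otimes \alpha$ is still strictly approximately inner (\Rmk{Exten_Ap_in}), so by \Prp{Pr_Magic_a} it suffices to show $a \precsim_{\CGAa} b$, and for that it is enough to show $d_\tau(a) + r < d_\tau(b)$ for all $\tau \in \QT(\CGAa)$. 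But every normalized quasitrace $\tau$ on $\CGAa$ restricts to a positive quasitrace on~$A$ which is $\alpha$-invariant; after normalizing, $\tau|_A \in \QT(A)$, and the dimension functions satisfy $d_\tau(a) = d_{\tau|_A}(a)$ for $a \in \Mi(A)_+$ since $a$ lies in (a matrix amplification of)~$A$. Hence the hypothesis transfers, giving $a \precsim_{\CGAa} b$ and then $a \precsim_A b$. This shows $A$ has $r$-comparison whenever $\CGAa$ does, so $\rc(A) \le \rc(\CGAa)$.

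For part~(\ref{Thm_rc_In.b}), assuming $B := \CGAa$ is simple (hence stably finite, being a finite-group crossed product of a stably finite algebra with a finite-dimensional fixed-point situation — this follows since $A \subseteq B \subseteq M_{\card G}(A)$ up to the standard corner picture), the lower bound $\rc(A) \le \rc(B)$ is already part~(\ref{Thm_rc_In.a}). For the upper bound $\rc(B) \le \rc(A^\alpha)$, I would use the well-known identification of $B$ with a full corner of $M_{\card G} \otimes A^\alpha$: writing $e = \card(G)^{-1}\sum_{g} u_g \in B \otimes \cK$ or, more precisely, using the standard isomorphism $\CGAa \cong p\big(M_{\card(G)}(A^\alpha)\big)p$ valid when the fixed-point algebra and crossed product are Morita equivalent via the $A^\alpha$--$B$ imprimitivity bimodule. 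Concretely, $B$ is isomorphic to a full corner $qMq$ in $M := M_{\card(G)}(A^\alpha)$ for a suitable full projection $q$. Then \Thm{Ourcornertheorem} applied to $M$ and $q$ gives
\[
\rc(B) = \rc(qMq) \le \frac{1}{\lambda}\cdot \rc(M) = \frac{1}{\lambda}\cdot\frac{1}{\card(G)}\rc(A^\alpha),
\]
where $\lambda = \inf\{\rho(q) : \rho \in \QT(M)\}$, using \Prp{Prp6.2.Tom06}(\ref{Prp6.2.Tom06.b}) to evaluate $\rc(M)$. One must then check $\lambda \ge \card(G)^{-1}$; this should follow because the rank of $q$ in $M_{\card(G)}(A^\alpha)$, measured by any normalized quasitrace, equals $\card(G)^{-1}$ times the normalized rank computation coming from the conditional expectation $E \colon B \to A^\alpha$ and the fact that $[1_B]$ corresponds to a projection of trace-value exactly $\card(G)\cdot\card(G)^{-1} = 1$ appropriately normalized. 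Plugging $\lambda = \card(G)^{-1}$ gives $\rc(B) \le \rc(A^\alpha)$ as desired.

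\textbf{Main obstacle.} The routine part is the quasitrace bookkeeping in~(\ref{Thm_rc_In.a}); the genuinely delicate point is the upper bound in~(\ref{Thm_rc_In.b}), specifically pinning down the precise constant $\lambda$ in the corner estimate. The Morita-equivalence picture $\CGAa \sim_{\mathrm{M}} A^\alpha$ for finite-group actions is classical, but to invoke \Thm{Ourcornertheorem} one needs $B$ realized \emph{on the nose} as a full corner of a matrix algebra over $A^\alpha$ with control over $\rho(q)$ for all $\rho \in \QT(M_{\card(G)}(A^\alpha))$, not merely over tracial states; since quasitraces on $A^\alpha$ need not be traces, one should verify that the relevant $q$ has constant quasitracial rank $\card(G)^{-1}$, which amounts to the observation that $q$ is Murray--von Neumann equivalent to a diagonal projection $\diag(1_{A^\alpha}, 0, \ldots, 0)$ rescaled — equivalently that $\card(G)\cdot[q] = [1_M]$ in suitable amplifications. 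Establishing this equivalence cleanly, so that $\lambda = \eta = \card(G)^{-1}$ and the corner bound collapses to exactly $\rc(B) \le \rc(A^\alpha)$, is where the real work lies; everything else is assembly from \Thm{WC_injectivity}, \Prp{Pr_Magic_a}, and the cited propositions.
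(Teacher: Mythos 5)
Your part~(\ref{Thm_rc_In.a}) is essentially the paper's argument: restrict quasitraces on $\CGAa$ to $A$ (no renormalization is even needed, since $1_A$ is the unit of the crossed product), invoke $r$-comparison in $\CGAa$, and pull $a \precsim_{\CGAa} b$ back to $a \precsim_A b$ via Remark~\ref{Exten_Ap_in} and Proposition~\ref{Pr_Magic_a} (equivalently Theorem~\ref{WC_injectivity}(\ref{WC_injectivity_a})). That part is correct and matches the paper.

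Part~(\ref{Thm_rc_In.b}) is where you diverge, and the divergence is exactly where your gap sits. The paper applies Theorem~\ref{Ourcornertheorem} \emph{inside} $B := \CGAa$: the averaging projection $p = \frac{1}{\card(G)}\sum_{g} u_g$ is full because $B$ is simple, $pBp \cong A^{\alpha}$ (Lemma~4.3(4) of \cite{AGP19}), and since $\eta = \sup_{\rho}\rho(p) \leq 1$ trivially, the corner theorem gives $\rc(B) \leq \frac{1}{\eta}\rc(B) \leq \rc(pBp) = \rc(A^{\alpha})$; no exact quasitrace value is ever needed. You instead try to realize $B$ as a full corner $qMq$ with $M = M_{\card(G)}(A^{\alpha})$ and apply the corner theorem to $M$. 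Two problems. First, the identification $B \cong q\,M_{\card(G)}(A^{\alpha})\,q$ is not a consequence of Morita equivalence alone: it requires $A$ to be finitely generated projective as a right $A^{\alpha}$-module with $\card(G)$ generators (equivalently, the canonical conditional expectation $E \colon A \to A^{\alpha}$ to be of index-finite type with a quasi-basis of that size), and nothing in the paper or in your proposal establishes this for a general strictly approximately inner action with simple crossed product. Second, your assertion that $\lambda = \card(G)^{-1}$ exactly is unjustified; $\rho(q)$ need not be constant, and the paper itself only obtains the value $\frac{1}{\card(G)}$ for the averaging projection under the Rokhlin property (Proposition~\ref{Pr.In.Ro}), while the remark following the theorem shows $\rho(p)$ is not pinned down in general. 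What your computation actually needs is only $\lambda \geq \card(G)^{-1}$, which, granting the corner realization, would follow from the $A^{\alpha}$-bimodule splitting $A = A^{\alpha} \oplus \ker E$; but since the corner realization is precisely the point you flag as the ``main obstacle'' and leave unresolved, your proof of the upper bound is incomplete. The paper's trick of viewing $A^{\alpha}$ as the corner of $B$, rather than $B$ as a corner over $A^{\alpha}$, is what makes both difficulties evaporate.
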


\begin{proof}
To prove (\ref{Thm_rc_In.a}), assume $\rc (\CGAa) < \infty$.
Let $r \in [0, \I)$. Suppose that $\CGAa$ has $r$-comparison.
Let $a, b \in M_{\I} (A)_{+}$
satisfy 
\begin{equation}\label{Eq1.20200424}
d_{\rho} (a) + r < d_{\rho} (b)
\end{equation}
for all $\rho \in \QT (A)$.
Since every quasitrace on~$\CGAa$ restricts to a quasitrace on~$A$,
it follows from~(\ref{Eq1.20200424}) that 
\[
d_{\rho} (a) + r < d_{\rho} (b)
\]
for all $\rho \in \QT (\CGAa)$.
Now, since $\CGAa$ has $r$-comparison, we get $a \precsim_{\CGAa} b$.
Then, by \Thm{WC_injectivity}(\ref{WC_injectivity_a}), $a \precsim_{A} b$.
Therefore $\rc (A) \leq r$.
Taking the infimum over $r \in [0, \I)$
such that $\CGAa$ has $r$-comparison,
we get $\rc (A) \leq \rc (\CGAa)$.

To prove (\ref{Thm_rc_In.b}), assume the notation in Theorem~\ref{Ourcornertheorem} and let $p = \frac{1}{\card (G)} \sum_{g \in G} u_g$. 
Since $C^*(G, A, \alpha)$ is simple, $p$ is a  full projection in $C^*(G, A, \alpha)$.
Therefore, using (\ref{Thm_rc_In.a}) at the first step, 
using $0<\eta \leq 1$ at the second step, 
using Theorem~\ref{Ourcornertheorem} at the third step, 
and using Lemma~4.3(4) of \cite{AGP19} at the last step,
\[
\rc (A) \leq \rc \big(C^*(G, A, \alpha)\big) \leq \frac{1}{\eta} \cdot \rc \big(C^*(G, A, \alpha)\big) \leq 
\rc \big( p C^*(G, A, \alpha) p \big)  = \rc (A^\alpha),
\]
as desired.
\end{proof}
Let $p$ and $\eta$ be as in the proof of Theorem~\ref{Thm_rc_In}(\ref{Thm_rc_In.b}).
Suppose that 
$C^*(G, A, \alpha)$ is simple and
$\rc \big(C^*(G, A, \alpha)\big)\neq 0$. Since $\rho (p) <1$ for all $\rho \in C^*(G, A, \alpha)$, 
it follows that $\eta<1$. Therefore
$\rc \big(C^*(G, A, \alpha) \big) < \rc (A^{\alpha})$.  
In the light of Theorem~\ref{Thm_rc_In} and the results of \cite{AGP19}, it is shown in the following proposition that
actions of finite groups on many nonclassifiable C*-algebras cannot simultaneously have the Rokhlin property and be
 strictly approximately inner.
\begin{prp}\label{Pr.In.Ro}
Let $A$ be a stably finite unital \ca{} with $0<\rc (A) < \infty$. Then 
there is
no action of any nontrivial finite group on $A$ which both has the
Rokhlin property and is strictly approximately inner.
\end{prp}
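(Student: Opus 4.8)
The plan is to argue by contradiction, playing the lower bound on $\rc\big(C^*(G,A,\alpha)\big)$ coming from strict approximate innerness against the upper bound coming from the Rokhlin property; for a nontrivial group these two are incompatible unless $\rc(A)$ is $0$ or $\infty$.

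First I would suppose, toward a contradiction, that $G$ is a nontrivial finite group and that $\alpha \colon G \to \Aut(A)$ is an action that both has the Rokhlin property and is strictly approximately inner. Since $\alpha$ is strictly approximately inner and $A$ is stably finite and unital, Theorem~\ref{Thm_rc_In}(\ref{Thm_rc_In.a}) gives
\[
\rc(A) \;\leq\; \rc\big(C^*(G, A, \alpha)\big).
\]
Since $\alpha$ has the Rokhlin property and $A$ is stably finite and unital, the comparison results of \cite{AGP19} for Rokhlin actions give
\[
\rc\big(C^*(G, A, \alpha)\big) \;\leq\; \frac{1}{\card(G)}\,\rc(A);
\]
the factor $1/\card(G)$ reflects that the Rokhlin projections $e_g$, together with the canonical unitaries $u_g$, produce a $\card(G)\times\card(G)$ system of matrix units $v_{g,h} = e_g u_{gh^{-1}}$ (with $\sum_{g} v_{g,g} = 1$) in the sequence algebra of $C^*(G,A,\alpha)$, so that $C^*(G,A,\alpha)$ is, approximately, a $\card(G)\times\card(G)$ matrix amplification of its fixed point algebra. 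Combining the two displays yields $\rc(A) \leq \tfrac{1}{\card(G)}\rc(A)$.

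Then I would invoke the hypothesis $0 < \rc(A) < \infty$ to finish: finiteness of $\rc(A)$ lets us rearrange to $\big(1 - \tfrac{1}{\card(G)}\big)\rc(A) \leq 0$, while nontriviality of $G$ gives $\card(G) \geq 2$, hence $1 - \tfrac{1}{\card(G)} \geq \tfrac12 > 0$, forcing $\rc(A) \leq 0$ and contradicting $\rc(A) > 0$. So no such action exists. The argument is essentially immediate given Theorem~\ref{Thm_rc_In} and \cite{AGP19}, so there is no real obstacle of ideas; the one thing to verify carefully is that the Rokhlin upper bound is quoted at the level of generality needed here — an arbitrary stably finite unital $A$, with no simplicity assumed — which is the natural setting of the relevant statement in \cite{AGP19}.
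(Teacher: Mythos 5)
Your proposal is correct and follows essentially the same route as the paper's proof: the lower bound $\rc(A) \leq \rc\big(\CGAa\big)$ from Theorem~\ref{Thm_rc_In}(\ref{Thm_rc_In.a}), the Rokhlin upper bound $\rc\big(\CGAa\big) \leq \frac{1}{\card(G)}\cdot\rc(A)$, and the contradiction with $0<\rc(A)<\infty$. The only difference is bookkeeping: rather than quoting the upper bound as a single statement of \cite{AGP19}, the paper assembles it without any simplicity hypothesis via the projection $p=\frac{1}{\card(G)}\sum_{g\in G}u_g$, whose value under every trace is $\frac{1}{\card(G)}$ by the Rokhlin property (Theorem~10.3.11 of \cite{GKPT18}), combined with Theorem~\ref{Ourcornertheorem} and Lemma~4.3(6) and Theorem~4.2 of \cite{AGP19} to get $\rc\big(\CGAa\big)=\frac{1}{\card(G)}\cdot\rc(A^{\alpha})\leq \frac{1}{\card(G)}\cdot\rc(A)$, which is exactly what resolves the generality concern you flag at the end.
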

\begin{proof}
Assume that $\alpha \colon G \to \Aut(A)$ is an action of a nontrivial group $G$ on $A$ which 
both has the Rokhlin property and is strictly approximately inner. Let $p = \frac{1}{\card (G)} \sum_{g \in G} u_g$.
Then, by Theorem~10.3.11 of \cite{GKPT18}, 
\begin{equation}
\label{Eq1.20200729}
\tau (p)= \frac{1}{\card (G)}
\end{equation}
for all $\tau \in C^*(G, A, \alpha)$. 
Therefore, using Theorem~\ref{Thm_rc_In}(\ref{Thm_rc_In.a}) at the first step, 
 using Lemma~4.3(6) of \cite{AGP19}, (\ref{Eq1.20200729}), and Theorem~\ref{Ourcornertheorem} at the second step,
 and using Theorem~4.2 of \cite{AGP19} at the third step,
\[
\rc(A) \leq \rc \left(C^*(G, A, \alpha)\right) = \frac{1}{\card(G)} \cdot \rc (A^{\alpha}) \leq \frac{1}{\card(G)} \cdot \rc (A).
\]
This contradicts $0<\rc(A) <\infty$.
\end{proof}
Proposition~\ref{Pr.In.Ro} might be true in the case that $\rc (A)= \infty$, although our method does not say anything about it.
Also, Proposition~\ref{Pr.In.Ro} is no longer valid if we do not have $\rc (A)>0$.
Here is the smallest counterexample.
\begin{exa}
\label{Count.Examp}
Let $\alpha$ be the action of $\mathbb{Z}/2 \mathbb{Z}$ on the $2^{\infty}$ UHF algebra 
generated by 
$\bigotimes_{n=1}^{\infty} \Ad 
\left(
\begin{matrix}
0&1\\
1&0
\end{matrix}
\right)
$
as in Example~10.3.6 of \cite{GKPT18} and let $F_2$ be the free group on two generators. 
 Then:
\begin{enumerate}
\item
 $\alpha$ has the Rokhlin property. 
\item
$\alpha$ is strictly approximately inner by Lemma~\ref{Lem_inlim_in_ap}.
\item
We can get such an action on a nonclassifiable simple unital C*-algebra by tensoring with the trivial action of
$\mathbb{Z}/2 \mathbb{Z}$ on 
the reduced group C*-algebra
$C^*_{\mathrm{r}} (F_2)$.
\end{enumerate} 
\end{exa}
\section{The radius of comparison and tracial strict approximate innerness}
\label{Sec_Tracial_Approx_Innner}
In this section, we essentially have a tracial analog of what we proved in Section~\ref{Sec_Approx_Innner}.

 The following definition is the tracial analog of Definition~\ref{D_9719_AppRep_Phi} and 
is  a  generalization of Definition~3.2 of~\cite{Ph11}
to nonabelian finite groups and not necessarily separable C*-algebras.
\begin{dfn}\label{D_9731_NonAbTrAppRep}
Let $A$ be an infinite-dimensional simple unital C*-algebra
and let $\af \colon G \to \Aut (A)$
be an action of a finite group $G$ on~$A$.
We say that $\af$ is
{\emph{tracially approximately representable}}
if for every finite set $F \S A$, every $\ep > 0$,
and every positive element $x \in A$ with $\| x \| = 1$,
there are a \pj{} $e \in A$
and unitaries $w_g \in \U (e A e)$ for $g \in G$ such that:
\begin{enumerate}
\item\label{D_9731_NonAbTrAppRep:1} 
$\| e a - a e \| < \ep$ for all $a \in F$.
\item\label{D_9731_NonAbTrAppRep:2} 
$\| \af_g (e a e) - w_g e a e w_g^* \| < \ep$
for all $a \in F$ and all $g \in G$.
\item\label{D_9731_NonAbTrAppRep:3} 
$\| w_g w_h - w_{g h} \| < \ep$ for all $g, h \in G$.
\item\label{D_9731_NonAbTrAppRep:4} 
$\| \af_g (w_h) - w_{g h g^{-1}} \| < \ep$ for all $g, h \in G$.
\item\label{D_9731_NonAbTrAppRep:5} 
$1 - e$ is \mvnt{} to a
\pj{} in the \hsa{} of $A$ generated by $x$.
\item\label{D_9731_NonAbTrAppRep:6} 
$\| e x e \| > 1 - \ep$.
\end{enumerate}
\end{dfn}
We  omit Condition (\ref{D_9731_NonAbTrAppRep:4}) in \Def{D_9731_NonAbTrAppRep} and 
 give the tracial analog of Definition~\ref{D_9719_StrictAppInn}.
\begin{dfn}\label{D_9731_TrStrAppInn}
Let $A$ be an infinite-dimensional simple  unital C*-algebra
and let $\af \colon G \to \Aut (A)$
be an action of a finite  group $G$ on $A$.
We say that $\af$ is
{\emph{tracially strictly approximately inner}}
if for every finite set $F \S A$, every $\ep > 0$,
and every positive element $x \in A$ with $\| x \| = 1$,
there are a \pj{} $e \in A$
and unitaries $z_g \in \U (e A e)$ for $g \in G$  such that:
\begin{enumerate}
\item\label{D_9731_TrStrAppInn:2.1} 
$\| e a - a e \| < \ep$ for all $a \in F$.
\item\label{D_9731_TrStrAppInn:2.2}
$\| \af_g (e a e) - z_g e a e z_g^* \| < \ep$
for all $a \in F$ and all $g \in G$.
\item \label{D_9731_TrStrAppInn:2.3}
$\| z_g z_h - z_{gh}\| < \ep$ for all $g, h \in G$.
\item\label{D_9731_TrStrAppInn:2.4} 
$1 - e$ is \mvnt{} to a
\pj{} in the \hsa{} of $A$ generated by $x$.
\item\label{D_9731_TrStrAppInn:2.5} 
$\| e x e \| > 1 - \ep$.
\end{enumerate}
\end{dfn}
In \Def{D_9731_TrStrAppInn},
it is clear that the algebra $A$ cannot be type~I.

To prove Theorem~\ref{Thm_magic_b}, we require invariance instead of approximate invariance
 in Definition~\ref{D_9731_TrStrAppInn}. We also require that $g \mapsto z_g$ be a homomorphism.
So, we provide a stronger version of Definition~\ref{D_9731_TrStrAppInn} for later applications in the following lemma.
\begin{lem}\label{L_Invariant_Pr}
Let $A$ be an infinite-dimensional simple  unital C*-algebra
and let $\af \colon G \to \Aut (A)$
be an action of a finite  group $G$ on $A$ which is tracially strictly approximately inner.
Then 
for every finite set $F \S A$, every $\ep > 0$,
and every positive element $x \in A$ with $\| x \| = 1$,
there are a \pj{} $e \in A^\af$
and a \hm{} $g \mapsto z_g$
from $G$ to $U (e A e)$ such that:
\begin{enumerate}
\item\label{D_9731_TrStrAppInn:1} 
$\| e a - a e \| < \ep$ for all $a \in F$.
\item\label{D_9731_TrStrAppInn:2}
$\| \af_g (e a e) - z_g e a e z_g^* \| < \ep$
for all $a \in F$ and all $g \in G$.
\item\label{D_9731_TrStrAppInn:4} 
$1 - e$ is \mvnt{} to a
\pj{} in the \hsa{} of $A$ generated by $x$.
\item\label{D_9731_TrStrAppInn:5} 
$\| e x e \| > 1 - \ep$.
\end{enumerate}
\end{lem}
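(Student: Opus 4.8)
The plan is to promote the approximate data supplied by Definition~\ref{D_9731_TrStrAppInn} to exact data by three successive perturbations, each small enough not to disturb the earlier ones. First I would apply Definition~\ref{D_9731_TrStrAppInn} with a much smaller tolerance $\ep_0 \ll \ep$ (to be specified), with $F$ enlarged to also contain $1$ and the set $\{\af_g(a)\colon g\in G,\ a\in F\}$, and with the same positive element $x$, obtaining a \pj{} $e_0\in A$ and unitaries $z_g^{(0)}\in\U(e_0Ae_0)$ satisfying (\ref{D_9731_TrStrAppInn:2.1})--(\ref{D_9731_TrStrAppInn:2.5}) for $\ep_0$. Next I would fix the failure of the group law: since $C^*(G)$ is semiprojective and $g\mapsto z_g^{(0)}$ is an $\ep_0$-approximate unitary representation of $G$ into $\U(e_0Ae_0)$, for $\ep_0$ small enough there is an honest \hm{} $g\mapsto z_g'$ from $G$ to $\U(e_0Ae_0)$ with $\|z_g'-z_g^{(0)}\|$ small for all $g$; this preserves (\ref{D_9731_TrStrAppInn:2.2}) up to a controlled enlargement of the constant and leaves (\ref{D_9731_TrStrAppInn:2.1}), (\ref{D_9731_TrStrAppInn:2.4}), (\ref{D_9731_TrStrAppInn:2.5}) untouched.

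The remaining and more delicate point is to replace $e_0$ by an $\af$-invariant \pj{} $e\in A^\af$ without losing anything. The idea is to average: form $f=\frac{1}{\card(G)}\sum_{g\in G}\af_g(e_0)$, which is a self-adjoint $\af$-invariant element; from (\ref{D_9731_TrStrAppInn:2.1}) applied to $1$ together with (\ref{D_9731_TrStrAppInn:2.2}) one sees that each $\af_g(e_0)$ is within $O(\ep_0)$ of $z_g' e_0 z_g'^{*}$ — more precisely, using $\|\af_g(e_0ae_0)-z_g'e_0ae_0z_g'^{*}\|$ small for $a=1$ gives $\|\af_g(e_0)-z_g'e_0z_g'^{*}\|$ small — and in particular $\af_g(e_0)$ is close to $e_0$ up to the unitary conjugation, so $\af_g(e_0)e_0$ and $e_0\af_g(e_0)$ are close to projections; hence $f$ is within $O(\ep_0)$ of the \pj{} $e_0$ in norm. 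For $\ep_0$ small the spectrum of $f$ avoids $\{1/2\}$, so $e=\chi_{(1/2,\infty)}(f)\in A^\af$ is a genuine $\af$-invariant \pj{} with $\|e-e_0\|$ small, and $e$ is unitarily equivalent to $e_0$ via a unitary $u\in A$ close to $1$. Conjugating the \hm{} $g\mapsto z_g'$ by $u$ (and cutting down by $e$) yields a \hm{} $g\mapsto z_g:=u z_g' u^{*}\in\U(eAe)$, and a straightforward triangle-inequality bookkeeping shows $z_g e a e z_g^{*}$ is within $O(\ep_0)$ of $\af_g(eae)$, giving (\ref{D_9731_TrStrAppInn:2}). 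Item (\ref{D_9731_TrStrAppInn:1}) follows since $\|e-e_0\|$ is small and $e_0$ almost commutes with $F$. For (\ref{D_9731_TrStrAppInn:5}), $\|exe\|$ is within $O(\ep_0)$ of $\|e_0xe_0\|>1-\ep_0$, so it exceeds $1-\ep$. For (\ref{D_9731_TrStrAppInn:4}), since $e\sim e_0$ we have $1-e\sim 1-e_0$, and $1-e_0$ is \mvnt{} to a \pj{} in the \hsa{} generated by $x$; composing the equivalences gives the same for $1-e$.

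I expect the main obstacle to be the invariant-projection step: one must be careful that the averaged element $f$ really is norm-close to a projection, which hinges on the compatibility of (\ref{D_9731_TrStrAppInn:2.1}) (at $a=1$) with (\ref{D_9731_TrStrAppInn:2.2}) to control $\|\af_g(e_0)-e_0\|$ up to inner automorphisms — a purely estimate-driven argument, but one where the constants must be chosen in the right order ($\ep_0$ depending on $\ep$ and $\card(G)$, then the semiprojectivity constant for $C^*(G)$ depending on $\ep_0$, etc.). The rest is routine functional calculus and standard facts that a norm-small perturbation of a projection is unitarily equivalent to it by a unitary close to $1$, together with transitivity of Murray--von Neumann equivalence.
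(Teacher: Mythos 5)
Your proposal is correct and follows essentially the same route as the paper, which simply invokes the proof of Lemma~3.5 of \cite{Ph11}: one makes the approximate unitaries into an exact homomorphism using semiprojectivity (stability) of $C^*(G)$ applied inside the corner $e_0Ae_0$, and one makes the projection exactly invariant by noting that condition~(\ref{D_9731_TrStrAppInn:2.2}) of Definition~\ref{D_9731_TrStrAppInn} with $1\in F$ gives $\|\af_g(e_0)-e_0\|<\ep_0$ (since $z_ge_0z_g^*=e_0$), averaging over $G$, applying functional calculus, and conjugating by a unitary close to~$1$. Your perturbation bookkeeping and the transitivity argument for Murray--von Neumann equivalence are exactly what is needed, and, as in the paper, neither separability of $A$ nor commutativity of $G$ enters.
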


\begin{proof}
The proof is essentially the same as the proof of Lemma 3.5 of \cite{Ph11}.
The differences  are as follows:
\begin{enumerate}
\item
Separability is not needed in the proof of that lemma.
\item
Semiprojectivity is used from $C^*(G)$ to $e A e$ not $(e A e)^\alpha$.
\item
Condition (4) in Lemma 3.5 of \cite{Ph11} is not needed in \Lem{L_Invariant_Pr}.
 So the assumption that $G$ is abelian   is not needed here.
\end{enumerate}
\end{proof}
Let $\alpha\colon G \to A$ be a tracially strictly approximately inner action of a finite group $G$
on an infinite-dimensional simple unital C*-algebra $A$. In the following theorem, we provide
a projection $e \in A^{\alpha}$ and
a unital surjective linear map  from $e C^*(G, A, \alpha) e$
to $e A e$ which is an approximate homomorphism on finite sets. Actually, it is the tracial analog of Lemma~\ref{Prp_magic_a}.
\begin{thm}\label{Thm_magic_b}
Let $A$ be an infinite-dimensional simple  unital \ca{} 
 and let  $\alpha \colon  G \to \Aut(A)$ be an action of a 
finite  group $G$ on $A$ which is tracially  strictly  approximately inner. 
Then  for every finite
set $F \subset \CGAa$, every $\ep > 0$, and every $a \in A_+$ with $\| a \|=1$, 
 there are a projection  $e \in A^{\alpha}$ and  a unital surjective  linear map
$\psi \colon e \CGAa e \to e A e$ such that:
\begin{enumerate}
\item \label{Thm_magic_b.1}
$\| \psi \| \leq \card (G)$.
\item \label{Thm_magic_b.2}
$\psi (e b e) = e b e$ for all $b \in A$.
\item \label{Thm_magic_b.3}
$\| \psi (e xy z^* e) - \psi (e x e) \psi (e y e) \psi (e z e)^*\| < \ep$ 
for all $x, y, z \in F$.
\item \label{Thm_magic_b.4}
$1-e \precsim_A a$.
\item \label{Thm_magic_b.5}
$\| e a e \| > 1-\ep$.
\setcounter{TmpEnumi}{\value{enumi}}
\end{enumerate}
\end{thm}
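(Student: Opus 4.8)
The plan is to imitate the proof of Lemma~\ref{Prp_magic_a}, but carried out inside the corner $e A e$ rather than in $A$ itself, using the stronger invariant form of tracial strict approximate innerness from Lemma~\ref{L_Invariant_Pr}. First I would reduce the finite set $F \subset \CGAa$ to a finite subset $F_0 \subset A$ by collecting all the Fourier coefficients and their adjoints, i.e.\ set $F_0 = \{ E_g(v), E_g(v)^* \colon g \in G,\ v \in F \}$, and put $M = \max\bigl(1, \max_{c \in F_0} \|c\|\bigr)$ and $\ep' = \frac{\ep}{2 M^2 \card(G)^3}$. Applying Lemma~\ref{L_Invariant_Pr} with $F_0 \cup F$ as the finite set, $\ep'$ in place of $\ep$, and the given $a$ as the positive element $x$, I obtain a projection $e \in A^{\af}$ and a homomorphism $g \mapsto z_g$ from $G$ to $\U(eAe)$ such that $\|ec - ce\| < \ep'$ for $c \in F_0 \cup F$, $\|\af_g(ece) - z_g\, ece\, z_g^*\| < \ep'$ for all $g$ and all $c \in F_0 \cup F$, with $1-e \precsim_A a$ (giving~(\ref{Thm_magic_b.4})) and $\|eae\| > 1-\ep$ (giving~(\ref{Thm_magic_b.5})).

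Next I would define $\psi \colon e\CGAa e \to eAe$ on elements of $C_{\mathrm c}(G,A,\af)$ by
\[
\psi\Bigl( e \Bigl( \ssum{g \in G} a_g u_g \Bigr) e \Bigr) = \ssum{g \in G} e a_g e \, z_g,
\]
and check this is well defined, linear, unital (here one uses $e \in A^\af$, so $e u_g e = u_g e = e u_g$, hence $e \cdot 1 \cdot e = \sum_g \tfrac{1}{\card(G)} u_g$ maps to $\sum_g \tfrac{1}{\card(G)} e z_g$; more carefully one writes the unit of $e\CGAa e$ and its image and uses $z_1 = e$), and surjective onto $eAe$. The norm bound~(\ref{Thm_magic_b.1}) follows exactly as in Lemma~\ref{Prp_magic_a}: $\|\psi(\sum_g e a_g e\, u_g e)\| \le \sum_g \|e a_g e\| \le \sum_g \|a_g\| \le \card(G)\,\|\sum_g a_g u_g\|$. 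Part~(\ref{Thm_magic_b.2}) is immediate from $z_1 = e$ and the definition, since $\psi(ebe) = ebe \cdot z_1 = ebe \cdot e = ebe$ (using $ebe \in eAe$). For~(\ref{Thm_magic_b.3}) I would expand $\psi(exyz^*e)$ and $\psi(exe)\psi(eye)\psi(eze)^*$ in Fourier coefficients — this requires the small commutator estimates $\|ec - ce\| < \ep'$ to move the projection $e$ past the various $a_g, b_h, c_t$ and past the $\af_g$'s at a controlled cost — and then replace $\af_g(ece)$ by $z_g\, ece\, z_g^*$ using the approximate covariance, tracking the error through the $\card(G)^3$ many summands with the uniform bound $M$ on the coefficients, arriving at a total error below $2\card(G)^3 M^2 \ep' = \ep$.

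The main obstacle — and the only real difference from Lemma~\ref{Prp_magic_a} — is the bookkeeping caused by the cutdowns by $e$: in the non-tracial case $\psi$ is an honest unital linear map on all of $\CGAa$ and the coefficient algebra is exactly $A$, whereas here every product $exyz^*e$ must be rewritten so that each Fourier coefficient appears sandwiched as $e(\cdot)e$, and the commutator errors $\|ec-ce\|<\ep'$ accumulate alongside the covariance errors. One must be slightly careful that $\af_g(ece)$ for $c \in F_0$ is itself close to $\af_g(e)\af_g(c)\af_g(e) = e\,\af_g(c)\,e$ (using $e \in A^\af$) before applying the covariance estimate, and that the resulting cross terms are bounded using $\|z_g\| = 1$ in $eAe$ and $\|e c e\| \le \|c\| \le M$. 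Since all of these are first-order estimates controlled by $\ep'$ and the combinatorial factor $\card(G)^3$, absorbing $\ep'$ was chosen precisely to absorb them, exactly as in Lemma~\ref{Prp_magic_a}; the argument is thus routine once the correct $\ep'$ and $F_0$ are fixed, and I would present it in the same two-step ``substitute, then estimate termwise'' format.
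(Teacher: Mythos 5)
Your proposal follows essentially the same route as the paper's proof: the same reduction to $F_0=\{E_g(v),E_g(v)^*\}$, the same use of Lemma~\ref{L_Invariant_Pr} with the given $a$ as the positive element, the same formula $\sum_g e a_g e\,u_g \mapsto \sum_g e a_g e\,z_g$, and the same termwise estimation for part~(\ref{Thm_magic_b.3}). The only adjustments needed are cosmetic: the finite set fed to Lemma~\ref{L_Invariant_Pr} should be $F_0$ (not $F_0\cup F$, since $F\not\subset A$), and the extra commutator and covariance errors you correctly anticipate force a smaller tolerance (the paper takes $\ep'=\ep/(5M^2\card(G)^3)$ rather than your $\ep/(2M^2\card(G)^3)$), a harmless constant change.
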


\begin{proof}
Let $\ep > 0$, let $a \in A_+$ with $\| a \|=1$, 
and let $F \subset \CGAa$ be a finite set.
Set 
\[
F_0= \Big\{ E_g (v), E_g (v)^* \colon g \in G \mbox{ and }  v \in F \Big\},
\]
\[
M= \max \Big( 1,\ \max_{b \in F_0} \| b \|  \Big),
\qquad
\mbox{ and }
\qquad
\ep'= \frac{\ep}{5 M^2 \card (G)^3}.
\]  
Applying \Lem{L_Invariant_Pr} with $\ep'$ and 
 $F_0$ as given,
we get a projection $e \in A^{\alpha}$ and
a \hm{} $g \mapsto z_g$
from $G$ to $U (e A e)$ such that:
\begin{enumerate}
\setcounter{enumi}{\value{TmpEnumi}}
\item \label{Tr_Ap_in.a}
$\| eb - be \| < \ep' $ for all $b \in F_0$.
\item \label{Tr_Ap_in.b}
$\| \alpha_g (ebe) - z_g ebe z^*_g\|< \ep'$ for all $b \in F_0$ and all $g \in G$.
\item \label{Tr_Ap_in.d}
$1-e \precsim_A a$.
\item \label{Tr_Ap_in.e}
$\| eae\| > 1 - \ep$.
\setcounter{TmpEnumi}{\value{enumi}}
\end{enumerate} 
So (\ref{Thm_magic_b.4}) and (\ref{Thm_magic_b.5}) are immediate. 

Now define $\psi \colon e \CGAa e \to e A e$ by 
\[
\sum_{g \in G} e a_g e u_g \mapsto \sum_{g \in G } e a_g e z_g .
\]
Clearly $\psi$ is unital, linear, and surjective. Also, Part~(\ref{Thm_magic_b.1}) and Part~(\ref{Thm_magic_b.2}) are immediate. 
 
 To prove (\ref{Thm_magic_b.3}), 
let
 $x=\sum_{g \in G} a_g u_g$, $y= \sum_{h\in G} b_h u_h$, and 
$z= \sum_{t\in G} c_t u_t$ be arbitrary elements in $F$.
So 
$z^* =\sum_{t\in G} \alpha_{t^{-1}} (c^*_t) u_{t^{-1}}$. 
Short computations then show that  
\begin{align}\label{Eq110.190708}
\psi( e xyz^* e)  
&=
\psi 
\left(
 \sum_{g,h,t  \in G} [e a_g \alpha_{g} (b_h) \alpha_{ght^{-1}} (c^*_t) e] u_{gh t^{-1}} 
 \right)
\\\notag
&=
 \sum_{g,h, t  \in G } e a_g \alpha_{g} (b_h) \alpha_{ght^{-1}} (c^*_t) e z_{gh t^{-1}}
\end{align}
and 
\begin{equation}\label{Eq111.190708}
\psi(exe) \psi(eye) \psi(eze)^*
=
  \sum_{g,h, t \in G } e a_g e z_g e b_h e z_{h t^{-1}} e c^{*}_{t} e.  
\end{equation}
Using (\ref{Tr_Ap_in.a}) and (\ref{Tr_Ap_in.b}) at the last step, 
we estimate, for all $b \in F_0$ and $g \in G$,
\begin{equation}\label{Eq1.20200326}
\| e \alpha_{g} (b) - z_g e b e z^*_g\| \leq \| \alpha_g (e b) - \alpha_g (ebe)\|
+
\| \alpha_g (ebe) - z_g e b e z^*_g \| < 2 \ep'.
\end{equation}
Therefore, using (\ref{Eq110.190708}) and (\ref{Eq111.190708}) at the first step and 
using (\ref{Eq1.20200326}) and (\ref{Tr_Ap_in.a}) at the second step,
\begin{align*}
&\| \psi (e xy z^* e) - \psi (e x e) \psi(e y e) \psi(e z e)^*\| 
\\
& \hspace*{2em} {\mbox{}} \leq
\sum_{g,h, t  \in G} 
\|e a_g -  e a_g e \| \cdot \| \alpha_{g} (b_h) \alpha_{ght^{-1}} (c^*_t) e z_{gh t^{-1}} \|
\\
&\hspace*{4em} {\mbox{}}+ 
\sum_{g,h, t  \in G} 
\|e a_g e \| \cdot \| e \alpha_{g} (b_h) - z_g e b_h e z^*_g \| \cdot \| \alpha_{ght^{-1}} (c^*_t) e z_{gh t^{-1}} \| 
\\
&  \hspace*{4em} {\mbox{}} + 
\sum_{g,h, t  \in G} 
\|e a_g e z_g e b_h e z^*_g \| 
\cdot 
\|  \alpha_{ght^{-1}} (c^*_t) e - z_{ght^{-1}} e c^*_t e z^*_{ght^{-1}} \|
 \cdot 
 \| e z_{gh t^{-1}} \|
\\
&  \hspace*{2em} {\mbox{}} <
 \card(G)^3 M^2 \ep' + 2\card(G)^3 M^2 \ep'+ 2\card(G)^3 M^2 \ep' = 5 \card(G)^3 M^2 \ep' =\ep.
\end{align*}
This completes the proof. 
\end{proof}
To prepare for Theorem~\ref{WC_plus_injectivity} and Theorem~\ref{Th_rc_Tr_inner}, the following lemmas are needed.
\begin{lem}\label{L_CR_Limit}
Let $A$ be an infinite-dimensional simple unital
 \ca{}, let $a, b \in A_+$, and 
 let  $\alpha \colon  G \to \Aut(A)$ be an action of a 
finite  group $G$ on $A$ which is tracially strictly   approximately inner. 
Assume $0$ is a limit point of $\spec (b)$.
Then  $a \precsim_{\CGAa} b$ if and only if 
  $a \precsim_{A} b$.
\end{lem}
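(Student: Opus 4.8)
The backwards implication $a \precsim_A b \Rightarrow a \precsim_{\CGAa} b$ is trivial since $A \subseteq \CGAa$. For the forward implication, the plan is to imitate the proof of Proposition~\ref{Pr_Magic_a}, but working inside a large corner $e \CGAa e$ instead of all of $\CGAa$, and then to absorb the ``cut-off'' error $1-e$ into $b$ using the hypothesis that $0$ is a limit point of $\spec(b)$. So fix $\ep > 0$; by Lemma~\ref{PhiB.Lem_18_4}(\ref{PhiB.Lem_18_4_11}) it suffices to show $(a-\ep)_+ \precsim_A b$. Since $A$ is simple and infinite-dimensional and $0$ is a limit point of $\spec(b)$, there is a nonzero positive element $b_0$ in the hereditary subalgebra of $A$ generated by $(b - \dt)_+$ for suitable small $\dt$, and in fact we can arrange a nonzero projection-like behaviour: concretely, using $a \precsim_{\CGAa} b$ and Lemma~\ref{PhiB.Lem_18_4}(\ref{PhiB.Lem_18_4_11.c}) choose $\dt>0$ with $(a - \tfrac{\ep}{3})_+ \precsim_{\CGAa} (b-\dt)_+$, and choose $v \in \CGAa$ with $\| v (b-\dt)_+ v^* - (a - \tfrac{\ep}{3})_+ \| < \tfrac{\ep}{6\,\card(G)}$.

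\textbf{Main step.} Now apply Theorem~\ref{Thm_magic_b} with the finite set $F = \{ (b-\dt)_+,\, v,\, v^* \}$, with error parameter $\tfrac{\ep}{6}$, and with a positive element $x \in A_+$ of norm one chosen so small (in the Cuntz sense) that $1 - e \precsim_A x$ will itself be Cuntz-below the ``room'' we have inside $b$. The point is that $0$ being a limit point of $\spec(b)$ means that for every $\gm>0$ there is a nonzero positive $c$ with $c \precsim_A b$ and $c \perp (b-\gm)_+$ in the relevant sense, so one can choose $x$ with $x \precsim_A (b - \dt')_+$-complement-type element and hence $1-e \precsim_A$ (that element). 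This gives a projection $e \in A^\alpha$ and a unital surjective linear map $\psi \colon e\CGAa e \to eAe$ which is identical on $eAe$, has norm $\le \card(G)$, is an approximate $*$-homomorphism on $F$ (cut down by $e$), with $1 - e \precsim_A x$ and $\|exe\| > 1 - \ep$. Then, exactly as in Proposition~\ref{Pr_Magic_a}, one estimates
\[
\big\| e(a - \tfrac{\ep}{3})_+ e - \psi(eve)\, e(b-\dt)_+ e\, \psi(eve)^* \big\| < \tfrac{\ep}{3},
\]
using Theorem~\ref{Thm_magic_b}(\ref{Thm_magic_b.2}), (\ref{Thm_magic_b.3}), (\ref{Thm_magic_b.1}) and the choice of $v$; hence by Lemma~\ref{PhiB.Lem_18_4}(\ref{PhiB.Lem_18_4_10}) we get $\big( e(a - \tfrac{\ep}{3})_+ e - \tfrac{\ep}{3} \big)_+ \precsim_A e(b-\dt)_+ e \precsim_A b$.

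\textbf{Assembling.} It remains to replace $e(a-\tfrac{\ep}{3})_+ e$ by $(a-\ep)_+$ up to Cuntz subequivalence. Since $\|ea - ae\| < \ep'$ is small (choose $\ep'$ small in terms of $\ep$), standard perturbation arguments — e.g. Lemma~\ref{PhiB.Lem_18_4}(\ref{PhiB.Lem_18_4_10}) together with the fact that $\|e(a-\tfrac{\ep}{3})_+ e - (a - \tfrac{\ep}{3})_+\|$ is controlled once $e$ approximately commutes with $a$ on the complement, combined with $(1-e) a (1-e) \precsim_A 1-e \precsim_A x$ — give $(a - \ep)_+ \precsim_A \langle e(a-\tfrac{\ep}{3})_+ e \rangle + \langle 1 - e\rangle$ in $\W(A)$. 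Finally, because $0$ is a limit point of $\spec(b)$ we can split off from $b$ a copy dominating $1-e$: more precisely we arrange at the outset that $1-e \precsim_A (b)$ \emph{orthogonally} to the part used above, so that $\langle e(b-\dt)_+ e\rangle + \langle 1-e\rangle \le \langle b\rangle$. Combining these yields $(a - \ep)_+ \precsim_A b$, and letting $\ep \to 0$ gives $a \precsim_A b$. I expect the main obstacle to be the bookkeeping in the last paragraph: one must choose $x$ (hence $e$) \emph{before} knowing $b$'s internal structure, so the correct order is to first use the limit-point hypothesis to extract from $b$ a nonzero orthogonal summand $b_1$ with $b_1 \perp (b-\dt)_+$-type part, then set $x = b_1/\|b_1\|$ and feed it to Theorem~\ref{Thm_magic_b}, guaranteeing $1-e \precsim_A b_1$; this is precisely the role of hypotheses (\ref{Thm_magic_b.4}) and the limit-point assumption, and getting the orthogonality and the inequality $\langle e(b-\dt)_+ e\rangle + \langle b_1\rangle \le \langle b\rangle$ simultaneously is the delicate point.
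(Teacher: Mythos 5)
Your overall strategy is the same as the paper's: choose $\dt$ and $v$ with $\bigl(a-\tfrac{\ep}{3}\bigr)_+ \precsim_{\CGAa} (b-\dt)_+$ approximately implemented by $v$, use the hypothesis that $0$ is a limit point of $\spec(b)$ to produce a nonzero positive element orthogonal to $(b-\dt)_+$ inside the hereditary subalgebra generated by $b$, feed that element as the ``$x$'' of Theorem~\ref{Thm_magic_b}, run the corner estimate exactly as in Proposition~\ref{Pr_Magic_a}, and finally absorb $1-e$ into the orthogonal piece of $b$. The point you flag as delicate at the end is in fact unproblematic and is resolved exactly as you guess: the paper takes $\ld \in \spec(b)\cap(0,\dt)$ and $f$ continuous with $f(\ld)=1$, $\supp(f)\subseteq(0,\dt)$, so that $f(b)\perp (b-\dt)_+$, $f(b)+(b-\dt)_+\precsim_A b$, and Theorem~\ref{Thm_magic_b}(\ref{Thm_magic_b.4}) gives $1-e\precsim_A f(b)$; since your corner estimate lands in $(b-\dt)_+$ (not merely in $b$), orthogonality then yields $(b-\dt)_+\oplus f(b)\precsim_A b$. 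Your numerics for the corner estimate are also fine.

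The genuine gap is in your ``Assembling'' paragraph. You deduce $(a-\ep)_+ \precsim_A \langle e(a-\tfrac{\ep}{3})_+e\rangle + \langle 1-e\rangle$ from ``$\|ea-ae\|<\ep'$ is small'' plus unspecified perturbation arguments. But Theorem~\ref{Thm_magic_b}, as stated, gives no approximate commutation of $e$ with anything (that information appears only inside its proof, via Lemma~\ref{L_Invariant_Pr}), and in any case your finite set $F=\{(b-\dt)_+,v,v^*\}$ does not contain $a$ or $(a-\tfrac{\ep}{3})_+$, so no commutation estimate for $a$ is available to you at all. As written, this step does not go through. The paper avoids commutation entirely: it applies Lemma~12.1.5 of \cite{GKPT18} (with $e$ in place of $g$) to get, for an arbitrary positive contraction $e$,
\[
(a-\ep)_+ \precsim_A (eae-\ep)_+ \oplus (1-e),
\]
and then compares $(eae-\ep)_+$ with the corner element via the norm estimate $\|eae - e(a-\tfrac{\ep}{2})_+e\| \leq \|a-(a-\tfrac{\ep}{2})_+\|<\tfrac{\ep}{2}$ and Lemma~\ref{PhiB.Lem_18_4}(\ref{Item_9420_LgSb_1_6}). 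So the defect is fixable by a standard cutting lemma (or by redoing the argument from Lemma~\ref{L_Invariant_Pr} with $a$ added to the finite set), but the justification you give is not one that your cited tools supply.
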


\begin{proof}
We only need to prove the forwards implication. Let $\ep > 0$ and let  $\| a \|, \| b \| \leq 1$.
By Lemma~\ref{PhiB.Lem_18_4}(\ref{PhiB.Lem_18_4_11}), it suffices to show that
 $(a - \ep)_+ \precsim_{A} b$.
Applying Lemma~\ref{PhiB.Lem_18_4}(\ref{PhiB.Lem_18_4_11.c}) with $\frac{\ep}{2}$ in place of $\eta$, 
we choose $\dt > 0$ such that
$\left(a - \tfrac{\ep}{2} \right)_+  \precsim_{C^*(G, A, \alpha)} (b- \delta)_+$.
Set $a' = \big(a - \tfrac{\ep}{2}\big)_+$ and $b' = (b - \dt)_+$.
So, there exists $v \in C^*(G, A, \alpha)$ such that 
\begin{equation}
\left\| v  b'  v^* - a' \right\| < \frac{\ep}{4 \card (G)}.
\end{equation} 
Since $0$ is a limit point of $\spec(b)$, we can choose $\ld \in \spec (b) \cap (0, \dt)$.
Let $f \colon [0, \I) \to [0, 1]$
be a \cfn{}
such that $f (\ld) = 1$ and $\supp (f) \subseteq (0, \dt)$.
Then
\begin{equation}\label{Eq1.190711}
\| f (b) \| = 1,
\qquad
f (b) \perp b',
\andeqn
f (b) + b' \precsim_{A} b.
\end{equation}
Set 
$
F= \{ a', b',  v, v^* \}$
and 
$\ep'= \frac{\ep}{4}$.
Applying \Thm{Thm_magic_b} with $\ep'$ in place of $\ep$, $F$ as given, and  $f(b)$ in place of $a$,
we get a projection $e \in A^{\alpha}$ and  a unital surjective linear map 
$\psi \colon e \CGAa e \to e A e$ such that
\begin{enumerate}
\item \label{Thm_magic_b.111}
$\| \psi \| \leq \card (G)$.
\item \label{Thm_magic_b.222}
$\psi (e c e) = e c e$ for all $c \in A$.
\item \label{Thm_magic_b.333}
$\| \psi (e xy z^* e) - \psi (e x e) \psi (e y e) \psi (e z e)^*\| < \ep'$ 
for all $x, y, z \in F$.
\item \label{Thm_magic_b.444}
$1-e \precsim_A f(b)$.
\end{enumerate}
We use (\ref{Thm_magic_b.111}), (\ref{Thm_magic_b.222}), and (\ref{Thm_magic_b.333}) to get
\begin{align*}
\| e a' e  - \psi (e v e) e b' e \psi (e v e)^* \|
&\leq
\| \psi (e a' e ) - \psi ( e v b' v^* e) \|
\\
&\qquad +
\| \psi ( e v b' v^* e) - \psi (e v e ) \psi (e b' e)  \psi (e  v e)^* \|
\\
&<
 \frac{\ep \| \psi \|}{4 \card (G)}
+
\ep' < \frac{\ep}{2}.
\end{align*}
Then, by Lemma~\ref{PhiB.Lem_18_4}(\ref{PhiB.Lem_18_4_10.a}), 
\begin{equation}\label{Eq11.2019.08.29}
\big(e a' e - \tfrac{\ep}{2}\big)_+ \precsim_{A} \psi ( e v e ) e b'  e \psi (e v e)^*  \precsim_A b'.
\end{equation}
Since $\| e a e - e a' e \| < \tfrac{\ep}{2}$,
 it follows from Lemma~\ref{PhiB.Lem_18_4}(\ref{Item_9420_LgSb_1_6}) that 
\begin{equation}\label{Eq22.2019.08.29}
(e a e - \ep)_+ \precsim_A \big(e a' e - \tfrac{\ep}{2} \big)_+.
\end{equation}
Now, applying Lemma~12.1.5 of \cite{GKPT18} with $e$ in place of $g$, we get 
\begin{equation}\label{Eq33.2019.08.29}
(a - \ep)_+ \precsim_A ( e a e - \ep)_+ \oplus (1 - e).
\end{equation}
Therefore, using (\ref{Eq33.2019.08.29}) at the first step ,
using (\ref{Eq22.2019.08.29}) at the second step, 
using (\ref{Eq11.2019.08.29}) and (\ref{Thm_magic_b.444}) at the third step, 
and using the third part of (\ref{Eq1.190711}) at the last step,
\begin{align*}
(a - \ep)_+ \precsim_A ( e a e - \ep)_+ \oplus (1 - e)
\precsim_A
\big(e a' e - \tfrac{\ep}{2} \big)_+ \oplus (1 - e)
\precsim_A
b' \oplus f(b)
\precsim_A
b. 
\end{align*}
This completes the proof.
\end{proof}
Assume the notation in the proof of Lemma~\ref{L_CR_Limit}.
If $\alpha$ is strictly approximately inner, 
the assumption that $0$ is a limit point of $\spec (b)$ is not necessary (see Proposition~\ref{Pr_Magic_a}).
The problem occurs only with the tracial version of strict approximate innerness. 
In that case, if $0 \not\in \spec (b)$, then $v b v^*$ does not  cover all of $a$ in trace, and something besides
$(b - \ep)_{+}$ is needed to take care of the part of $a$ that is
missed. That is what $f(b)$ does.
\begin{ntn}
\label{NTN1.20200702}
For $n\in \N$, we abbreviate $1_{M_{n}}$ to $1_n$.
\end{ntn}
\begin{lem}\label{L_EXt_T.S.A.I}
Let $A$ be an infinite-dimensional simple   unital  \ca, let $n \in \N$, and 
let $\alpha \colon  G \to \Aut(A)$  be an action of a finite group $G$ on $A$ 
which is  tracially  strictly approximately inner.
Then $\id_{M_n} \otimes \alpha\ \colon G \to \Aut (M_n (A))$ is also tracially strictly  approximately inner. 
\end{lem}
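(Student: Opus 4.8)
The goal is the exact matrix analog of \Rmk{Exten_Ap_in}, now at the tracial level: I must produce, for each finite $F\S M_n(A)$, each $\ep>0$, and each positive $x\in M_n(A)$ with $\|x\|=1$, a projection $e\in M_n(A)$ and unitaries $z_g\in\U(eM_n(A)e)$ satisfying the five conditions of \Def{D_9731_TrStrAppInn}. The natural move is to take $e$ of the form $1_n\otimes e_0$ (equivalently $\diag(e_0,\dots,e_0)$) for a suitable projection $e_0\in A$, and $z_g=1_n\otimes z_g^{(0)}$ for unitaries $z_g^{(0)}\in\U(e_0Ae_0)$; then $z_g\in\U(eM_n(A)e)$ automatically and the commutation/covariance estimates for $M_n(A)$ reduce, entrywise, to those for $A$. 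So the plan is: pick a positive $x_0\in A$ associated to $x$ (see below), apply \Def{D_9731_TrStrAppInn} to $A$ with input data built from the matrix entries of elements of $F$, and then transport the output back up to $M_n(A)$.

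**Key steps, in order.** First, standard bookkeeping: write each $v\in F$ as $v=\sum_{i,j}v_{ij}\otimes e_{ij}$ with $v_{ij}\in A$, and let $F_0\S A$ be the finite set of all these matrix entries $v_{ij}$ (together with their adjoints, to be safe). Conditions \eqref{D_9731_TrStrAppInn:2.1}, \eqref{D_9731_TrStrAppInn:2.2} for $M_n(A)$ applied to $v\in F$ will follow from the corresponding conditions for $A$ applied to $F_0$, at the cost of a factor like $n$ (or $n^2$) in the norms — so I run \Def{D_9731_TrStrAppInn} on $A$ with $\ep/n^2$ (say) in place of $\ep$. Second, handle the positive element: since $A$ is infinite-dimensional simple and $M_n(A)$ is too, and since the hereditary subalgebra of $M_n(A)$ generated by $x$ contains a copy of a hereditary subalgebra of $A$ — concretely, one can find a nonzero positive $x_0\in A$ (after cutting down) with the hereditary subalgebra generated by $x_0\otimes e_{11}$ sitting inside that generated by $x$, and with $\|x_0\|=1$ after normalization — feed $x_0$ into \Def{D_9731_TrStrAppInn} for $A$. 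This yields $e_0\in A$ and $z_g^{(0)}\in\U(e_0Ae_0)$. Third, set $e=1_n\otimes e_0$ and $z_g=1_n\otimes z_g^{(0)}$; verify \eqref{D_9731_TrStrAppInn:2.1}–\eqref{D_9731_TrStrAppInn:2.3} by the entrywise estimates, \eqref{D_9731_TrStrAppInn:2.4} by noting $1_n\otimes(1_A-e_0)\precsim_{M_n(A)}$ a projection in the hereditary subalgebra generated by $x$ (using that $1_A-e_0$ is Murray–von Neumann equivalent to a projection $p_0$ in the hereditary subalgebra of $A$ generated by $x_0$, hence $1_n\otimes(1_A-e_0)\sim 1_n\otimes p_0\precsim x$ by simplicity and a comparison/absorption argument in $M_n(A)$), and \eqref{D_9731_TrStrAppInn:2.5} by a direct computation with $exe$ using that $\|e_0x_0e_0\|$ is close to $1$ and the positions chosen so that the relevant corner of $exe$ dominates $e_0x_0e_0$.

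**Main obstacle.** The routine part is the norm estimates; the subtle part is condition \eqref{D_9731_TrStrAppInn:2.4}–\eqref{D_9731_TrStrAppInn:2.5}, i.e. simultaneously arranging that $1_n\otimes(1_A-e_0)$ is subequivalent to a projection in the hereditary subalgebra of $M_n(A)$ generated by the \emph{given} $x$ while $\|exe\|>1-\ep$. The point is that $x$ lives in $M_n(A)$, so one cannot simply apply the $A$-version to ``$x$'' directly; one needs to replace $x$ by an honest positive element of $A$ (a corner of $x$ after a small perturbation, or a nonzero hereditary sub-piece) whose associated hereditary subalgebra embeds appropriately, and then show the largeness condition $\|exe\|>1-\ep$ survives the passage back to $M_n(A)$. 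This is the kind of argument that appears in the proof of \Lem{L_Invariant_Pr} and in standard treatments of the tracial Rokhlin property (e.g.\ \cite{Ph11}); I expect the author's proof is ``essentially the same as that of \Rmk{Exten_Ap_in}'' together with the remark that for a simple C*-algebra the hereditary-subalgebra condition in \Def{D_9731_TrStrAppInn} is equivalent, via $r$-comparison and the algebraic reformulation in \Prp{rc_alg_def}, to a Cuntz-subequivalence condition that behaves well under tensoring with $M_n$ by \Prp{Prp6.2.Tom06}(\ref{Prp6.2.Tom06.b}).
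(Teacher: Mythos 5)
Your skeleton---take $e = 1_n \otimes e_0$ and $z_g = 1_n \otimes z_g^{(0)}$ and check conditions (\ref{D_9731_TrStrAppInn:2.1})--(\ref{D_9731_TrStrAppInn:2.3}) of Definition~\ref{D_9731_TrStrAppInn} entrywise, after applying the definition in $A$ to the finite set of matrix entries with a rescaled tolerance---is the same as the paper's (the paper runs it through Lemma~\ref{L_Invariant_Pr}, so that $g \mapsto z_g$ is an honest homomorphism and $e \in A^{\alpha}$; that difference is cosmetic). The genuine gap is at the two conditions you yourself single out as the main obstacle. For condition (\ref{D_9731_TrStrAppInn:2.4}) you apply the definition in $A$ to a corner $x_0$ of the given $x \in M_n(A)_+$ and get $1_A - e_0$ Murray--von Neumann equivalent to a projection $p_0$ in $\overline{x_0 A x_0}$; but what is needed is that $1_n \otimes p_0$, that is, $n$ orthogonal copies of $p_0$, is equivalent to a projection in the hereditary subalgebra of $M_n(A)$ generated by $x$. ``Simplicity and a comparison/absorption argument'' does not give this: the algebras of interest here have $\rc(A) > 0$, so no comparison is available, and if for instance $x = x_0 \otimes e_{11}$, then that hereditary subalgebra is $\overline{x_0 A x_0} \otimes e_{11}$ and nothing forces it to contain $n$ copies of $p_0$. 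The paper's fix is to choose the test element \emph{before} invoking the definition: Lemma~2.3 of \cite{HO13} supplies $x_* \in A_+ \setminus \{0\}$ with $1_n \otimes x_* \precsim_A (a - \lambda)_+$ (here $a$ is the given positive element of $M_n(A)$ and $\lambda$ is close to $1$), and it is this $x_*$ that is fed into Lemma~\ref{L_Invariant_Pr}; then $1_n \otimes (1_A - e_0) \precsim_A 1_n \otimes x_* \precsim_A (a - \lambda)_+ \precsim_A a$, with no comparison hypothesis. Your $x_0$, chosen after the fact, has no such divisibility property.

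Condition (\ref{D_9731_TrStrAppInn:2.5}) has a parallel problem: ``the relevant corner of $e x e$ dominates $e_0 x_0 e_0$'' cannot be arranged in general, because no diagonal corner of $x$ need have norm anywhere near $\| x \| = 1$ (take $x = \frac{1}{n} \sum_{j, k} y \otimes e_{j k}$ with $y \geq 0$ and $\| y \| = 1$: every entry has norm $\frac{1}{n}$), so making $\| e_0 x_0 e_0 \|$ close to $1$ after normalizing $x_0$ only bounds $\| e x e \|$ below by something of order $\frac{1}{n}$, not by $1 - \ep$. This is where the paper does real work: Lemma~2.7 of \cite{AGP19} produces $d \in M_n(A)$ with $\| d a d^* - 1_n \otimes x_* \| < \frac{\dt}{4}$ and $\| d \| \leq \lambda^{-1/2}$, the matrix entries of $d$ and $d^*$ are included in the finite set $F_0$ so that $e$ nearly commutes with $d$, and a chain of estimates then converts $\| p x_* p \| > 1 - \ep'$ into $\| e a e \| > (1 - \dt) \lambda^{2} > 1 - \ep$. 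Without substitutes for these two ingredients (the divisibility statement from \cite{HO13} and the norm transport via $d$), the proof does not close; and the concluding appeal to Proposition~\ref{rc_alg_def} and Proposition~\ref{Prp6.2.Tom06}(\ref{Prp6.2.Tom06.b}) is not such a substitute, since condition (\ref{D_9731_TrStrAppInn:2.4}) is a Murray--von Neumann/Cuntz condition, not a trace-comparison condition, precisely in the algebras where the radius of comparison is nonzero.
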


\begin{proof}
To begin the proof,
for all $j, k \in \N$, we define  $E^{(jk)} \colon M_n (A) \to A$ by
$E^{(jk)} (c) = c_{j, k}$,
where $c=(c_{j, k})_{j,k =1}^{n} \in M_n (A)$.
Let $\ep>0$, let $F \subset M_n (A)$ be a  finite set,
 and let $a \in M_n (A)_+$ with $\| a \|=1$.
Choose $\dt \in (0, \ep)$ and 
choose $\lambda \in \big(\tfrac{1}{2}, 1\big)$ such that 
\begin{equation}\label{EQ1.20200708}
(1 - \dt ) \lambda ^2 > 1- \ep.
\end{equation}
 Set $b = ( a - \lambda)_+$.
Lemma~2.3 of \cite{HO13} 
 provides $x \in A_+ \setminus \{0\}$ such that 
\begin{equation}\label{Eq1.20200706}
1_{n} \otimes x \precsim_A b.
\end{equation} 
We may assume that $\| x \|=1$.
Using Lemma~2.7 of  \cite{AGP19}, we get $d \in M_n (A)$ such that 
\begin{equation}\label{Eq1.2019.08.29}
\| d a d^* - 1_{n} \otimes x  \| < \frac{\dt}{4} 
\quad
\mbox { and }
\quad
\| d \| \leq \| 1_{n} \otimes x  \|^{1/2} \lambda ^{-1/2} = \lambda ^{-1/2} < \lambda ^{-1} < 2.
\end{equation}
Set 
\[
F_0 = \left\{ E^{(jk)} (c)  \colon c  \in F \cup \{ d, d^* \} \mbox{ and } j,k= 1, 2, \ldots, n \right\}
\quad
\mbox{ and }
\quad
\ep' = \frac{\dt}{16 \, \card (G)^2}.
\]
Applying \Lem{L_Invariant_Pr} with $F_0$, $\ep'$, and $x$ as given, 
we find a projection $p \in A^{\alpha}$ 
and a \hm{} $g \mapsto z_g$
from $G$ to $\U (p A p)$ such that:
\begin{enumerate}
\item\label{Tr_Ap_in.a.1}
$\| py - yp \| < \ep' $ for all $y \in F_0$.
\item \label{Tr_Ap_in.a.2}
$\| \alpha_g (pyp) - z_g pyp z^*_g\|< \ep'$ for all $y \in F_0$ and all $g \in G$.
\item \label{Tr_Ap_in.a.4}
$1_A - p \precsim_{A} x$.
\item \label{Tr_Ap_in.a.5}
$\| pxp\| > 1 - \ep'$.
\setcounter{TmpEnumi}{\value{enumi}}
\end{enumerate}  
Set 
$e=  1_{n} \otimes p$
 and 
$f_g= 1_{n} \otimes z_g$. 
We now claim that the following hold:
\begin{enumerate}
\setcounter{enumi}{\value{TmpEnumi}}
\item \label{Tr_Ap_in.b.1}
$\| e c - c e \| < \ep $ for all $c \in F$.
\item \label{Tr_Ap_in.b.2}
$\big\| 
\big(\id_{M_n} \otimes \alpha_g\big) (e c e) 
- f_g e c e {f_g}^{*}
\big\|< \ep$
 for all $c \in F$ and all $g \in G$.
\item \label{Tr_Ap_in.b.3}
$ f_g f_h = f_{gh}$ for all $g, h \in G$.
\item \label{Tr_Ap_in.b.4}
$1_{M_n (A)} - e \precsim_{A} a$.
\item \label{Tr_Ap_in.b.5}
$\|e a e \| > 1 - \ep$.
\setcounter{TmpEnumi}{\value{enumi}}
\end{enumerate} 
To prove (\ref{Tr_Ap_in.b.1}), let $c \in F$.
Then, by (\ref{Tr_Ap_in.a.1}),
\begin{equation*}
\| e c - ce \| 
\leq
\sum_{j,k=1}^{n} \left\| p E^{(jk)} (c) -   E^{(jk)} (c) p \right\| 
<
\card (G)^2 \ep' < \dt < \ep.
\end{equation*}
To prove (\ref{Tr_Ap_in.b.2}), we use (\ref{Tr_Ap_in.a.2}) to estimate
\begin{align*}
\left\| 
\big(\id_{M_n} \otimes \alpha_g\big) (e c e) 
- f_g e c e f^*_g
\right\|
&\leq 
\sum_{j, k=1}^{n} 
\big\| \alpha_g \big(p E^{(jk)} (c ) p \big) 
- 
z_g p E^{(jk)} (c ) p z^*_g \big\|
\\&<
 \card (G)^2 \ep' < \ep.
\end{align*}
Part (\ref{Tr_Ap_in.b.3}) is immediate. 
To prove (\ref{Tr_Ap_in.b.4}), we use (\ref{Tr_Ap_in.a.4}) at the second step and use (\ref{Eq1.20200706}) 
at the third step to get
\begin{equation*}
1_{M_n (A)} - e = 1_{n} \otimes (1_A - p) 
\precsim_A 
 1_{n} \otimes x
\precsim_A
b \precsim_{A} a.
\end{equation*}
To prove (\ref{Tr_Ap_in.b.5}), we first use (\ref{Tr_Ap_in.a.5}) at the second step to get
\begin{equation}
\label{Eq5.20200622}
\| e (1_{n} \otimes x) e \| > \| p x p \| > 1 - \ep' > 1 - \frac{\dt}{2}.
\end{equation}
Using  the first part of (\ref{Eq1.2019.08.29}) at the second step,
 using (\ref{Tr_Ap_in.b.1}) and the second part of (\ref{Eq1.2019.08.29})  at the third step and the fourth step, 
and using the second part of  (\ref{Eq1.2019.08.29}) at the fifth step,
 we get
\begin{align*} 
\| e (1_{n} \otimes x) e \| 
&\leq
\|  e (1_{n} \otimes x) e - e d a d^* e \| + \| e d a d^* e \| 
\\\notag
&<
\frac{\dt}{4} + \| e d  -  d e \| \cdot \| a d^* e \| + \| d e a d^* e \|
\\\notag
&<
\frac{\dt}{4} + \card (G)^2 \ep' \lambda^{-1} + \| d e a \| 
\cdot \|d^* e - e d^*  \| + \| d\| \cdot \| e a  e\| \cdot \| d^* \|
\\\notag
&<
\frac{\dt}{4} + \card (G)^2 \ep' \lambda ^{-1} + \card (G)^2 \ep' \lambda^{-1} + \lambda ^{-2} \| e a  e\|
\\\notag
&<
\frac{\dt}{4} + 4 \card (G)^2 \ep'  + \lambda ^{-2} \| e a  e\|
\\\notag
&<
\frac{\dt}{4} + \frac{\dt}{4} + \lambda ^{-2} \| e a  e\|.
\end{align*}
This relation, (\ref{Eq5.20200622}), and (\ref{EQ1.20200708}) imply that
$
\| e a  e\| > (1 - \dt) \lambda ^2> 1-\ep.
$
\end{proof}
The following definition is taken from the discussion before Corollary 2.24 of~\cite{APT11}
and Definition~3.1 of~\cite{Ph14}
with slight changes in notation.
\begin{dfn}\label{D_9421_Pure}
Let $A$ be a \ca.
We define
\begin{itemize}
\item
$A_{++} = \big\{ a \in A_{+} \colon
  \mbox{there is no projection $q \in M_{\infty} (A)$
         such that $\langle a \rangle_A = \langle q \rangle_A$} \big\}$.
\item
$\Cu_{+} (A)
 = \big\{ \langle a \rangle_A \colon a \in (\cK \otimes A)_{++} \}$.
\item
$\W_{+} (A)
 = \Cu_{+} (A) \cap \W (A)$.
\end{itemize}
The elements of $A_{++}$ are called {\emph{purely positive}}.
\end{dfn}
\begin{rmk} \label{RMK_Sta_si}
If $A$ is a stably finite  simple unital C*-algebra, then
\[
(\cK \otimes A)_{++}
  = \big\{ a \in (\cK \otimes A)_{+} \colon
   \mbox{$0$ is a limit point of $\spec (a)$} \big\},
\]
and $\W_{+} (A) \cup \{ 0 \}$ and $\Cu_{+} (A) \cup \{ 0 \}$
are unital subsemigroups of $\W (A)$ and $\Cu (A)$.
\end{rmk}   
\begin{thm}\label{WC_plus_injectivity}
Let $A$ be a stably finite simple unital \ca{} which is not of type~I
and let $\alpha \colon G \to \Aut (A)$
be an action of a finite group $G$ on $A$
which is tracially strictly approximately inner.
Let $\iota \colon A \to \CGAa$ be the inclusion map.
Then:
\begin{enumerate}
\item\label{WC_plus_injectivity_a}
The map $\W (\iota) \colon \W (A) \to \W (\CGAa)$
induces an isomorphism of ordered semigroups
from $\W_{+} (A) \cup \{ 0 \}$
to its image in $\W (\CGAa)$.
\item\label{WC_plus_injectivity_b}
The map $\Cu (\iota) \colon \Cu (A) \to \Cu (\CGAa)$
induces an isomorphism of ordered semigroups
from $\Cu_{+} (A) \cup \{ 0 \}$
to its image in $\Cu (\CGAa)$.
\end{enumerate}
\end{thm}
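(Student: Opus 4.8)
The plan is to follow the proof of Theorem~\ref{WC_injectivity} almost verbatim, with Lemma~\ref{L_CR_Limit} playing the role of Proposition~\ref{Pr_Magic_a} and Lemma~\ref{L_EXt_T.S.A.I} the role of Remark~\ref{Exten_Ap_in}. As in Theorem~\ref{WC_injectivity}, it is enough to establish the order-reflecting property, and only for the target: if $a, b \in (\cK \otimes A)_+$ with $\langle b \rangle_A \in \Cu_+(A) \cup \{0\}$ (resp.\ $\W_+(A) \cup \{0\}$) and $a \precsim_{\CGAa} b$, then $a \precsim_A b$; injectivity and the ordered-semigroup structure on the image are then formal. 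If $\langle b \rangle_A = 0$ then $b = 0$ as an element, so $a \precsim_{\CGAa} 0$ forces $a = 0$; hence I would assume $\langle b \rangle_A \neq 0$ and use Remark~\ref{RMK_Sta_si} to choose the representative $b$ so that $0$ is a limit point of $\spec (b)$.

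Part~(\ref{WC_plus_injectivity_a}) is the finite-matrix case: the representatives lie in $M_N(A)_+$ for some $N$. Since $M_N(A)$ is again an infinite-dimensional simple unital \ca{} and, by Lemma~\ref{L_EXt_T.S.A.I}, $\id_{M_N} \otimes \af$ is tracially strictly approximately inner, and since $C^*(G, M_N(A), \id_{M_N} \otimes \af) \cong M_N \otimes \CGAa$ identifies Cuntz subequivalence of elements of $M_N(A)_+$ in the two algebras, I would apply Lemma~\ref{L_CR_Limit} with $M_N(A)$ in place of $A$ — legitimate because $0$ is a limit point of $\spec (b)$ — to pass from $a \precsim_{\CGAa} b$ to $a \precsim_{M_N(A)} b$, which is $a \precsim_A b$.

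For part~(\ref{WC_plus_injectivity_b}) I would run the approximation scheme used for Theorem~\ref{WC_injectivity}(\ref{WC_injectivity_b}): for $\ep > 0$, Lemma~\ref{PhiB.Lem_18_4}(\ref{PhiB.Lem_18_4_11.c}) gives $\dt > 0$ with $(a - \tfrac{\ep}{4})_+ \precsim_{\CGAa} (b - \dt)_+$; picking $a_0 \in M_m(A)_+$, $b_0 \in M_n(A)_+$ close to $a$, $b$ and using Lemma~\ref{PhiB.Lem_18_4} several times yields $(a_0 - \tfrac{\ep}{2})_+ \precsim_{\CGAa} (b_0 - \tfrac{\dt}{2})_+$, $(a - \ep)_+ \precsim_A (a_0 - \tfrac{\ep}{2})_+$, and $(b_0 - \tfrac{\dt}{2})_+ \precsim_A (b - \tfrac{\dt}{4})_+ \precsim_A b$. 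The one new point, which I expect to be the crux, is that $(b_0 - \tfrac{\dt}{2})_+$ need not be purely positive, so Lemma~\ref{L_CR_Limit} cannot be applied to it directly; I would fix this by splitting off a small purely positive piece of $b$. Since $0$ is a limit point of $\spec (b)$, choose $\nu \in \spec (b) \cap (0, \tfrac{\dt}{4})$ and a continuous $g \colon [0, \infty) \to [0, 1]$ with $g(\nu) = 1$ and $\supp (g) \subseteq (0, \tfrac{\dt}{4})$, so that $g(b) \neq 0$, $g(b) \perp (b - \tfrac{\dt}{4})_+$, and $g(b) \oplus (b - \tfrac{\dt}{4})_+ \precsim_A b$; then, using that $A$ is simple and infinite-dimensional, produce a nonzero purely positive $c \in M_j(A)_+$ with $c \precsim_A g(b)$ (approximate $g(b)$ by a finite matrix, cut it down to a nonzero element Cuntz-below $g(b)$, and extract a purely positive element from the nonzero hereditary subalgebra it generates). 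Then $b_1 := (b_0 - \tfrac{\dt}{2})_+ \oplus c$ lies in a finite matrix algebra over $A$, is purely positive by Remark~\ref{RMK_Sta_si} (as $0$ is a limit point of $\spec (c) \subseteq \spec (b_1)$), and satisfies $(a_0 - \tfrac{\ep}{2})_+ \precsim_{\CGAa} (b_0 - \tfrac{\dt}{2})_+ \precsim_{\CGAa} b_1$ and $b_1 \precsim_A (b - \tfrac{\dt}{4})_+ \oplus g(b) \precsim_A b$. Applying Lemma~\ref{L_CR_Limit} over a large matrix algebra $M_N(A)$ (exactly as in part~(\ref{WC_plus_injectivity_a})) to $(a_0 - \tfrac{\ep}{2})_+$ and $b_1$ gives $(a_0 - \tfrac{\ep}{2})_+ \precsim_A b_1$, hence $(a - \ep)_+ \precsim_A b$; letting $\ep \to 0$ and using Lemma~\ref{PhiB.Lem_18_4}(\ref{PhiB.Lem_18_4_11}) completes the proof. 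Apart from this pure-positivity bookkeeping, every step is a routine transcription of Section~\ref{Sec_Approx_Innner} to the tracial setting, so I expect the splitting argument to be the only real obstacle.
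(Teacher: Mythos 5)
Your proposal is correct and follows essentially the same route as the paper: part~(\ref{WC_plus_injectivity_a}) is reduced to Lemma~\ref{L_CR_Limit} via Lemma~\ref{L_EXt_T.S.A.I} and Remark~\ref{RMK_Sta_si}, and for part~(\ref{WC_plus_injectivity_b}) you approximate $a$ and $b$ by elements of matrix algebras and then repair the possible failure of pure positivity of the cut-down of $b_0$ by splitting off a small orthogonal piece $f(b)$ of $b$ and a purely positive $c \precsim_A f(b)$ in a matrix algebra, exactly as the paper does (where $c$ is obtained with $\spec(c)=[0,1]$ from Lemma~2.1 of \cite{Ph14} applied to a hereditary subalgebra). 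Only the bookkeeping constants and the precise citation for extracting $c$ differ.
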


\begin{proof}
Part~(\ref{WC_plus_injectivity_a}) is essentially immediate from
Lemma~\ref{L_CR_Limit}, Lemma~\ref{L_EXt_T.S.A.I}, and Remark~\ref{RMK_Sta_si}.

We prove~(\ref{WC_plus_injectivity_b}).
It suffices to prove
that if $a, b \in ( \cK \otimes A)_{++}$
satisfy $a \precsim_{\CGAa} b$,
then $a \precsim_{A} b$.
By Lemma~\ref{PhiB.Lem_18_4}(\ref{PhiB.Lem_18_4_11}), it is enough to show that, for every $\ep>0$, we have
$(a - \ep)_{+} \precsim_{A} b$.
So let $\ep > 0$.
By Lemma~\ref{PhiB.Lem_18_4}(\ref{PhiB.Lem_18_4_11.c}), there exists  $\dt > 0$ such that
\begin{equation}\label{Eq1_2019_04_14_wTRP}
\left( a - \frac{\ep}{3} \right)_{+} \precsim_{\CGAa} (b - \dt)_{+}.
\end{equation}
Since $0$ is a limit point of $\spec(b)$, we can choose 
 $\ld \in \spec (b) \cap \big( 0, \frac{\dt}{3} \big)$.
Let $f \colon [0, \I) \to [0, 1]$
be a \cfn{}
such that $f (\ld) = 1$
and $\supp (f) \subseteq \big( 0, \frac{\dt}{3} \big)$.
So, we have
\begin{equation}\label{Eq_9421_Prop_hb}
\| f (b) \| = 1,
\qquad
f (b) \perp \Big( b - \frac{\dt}{3} \Big)_{+},
\andeqn
f (b) + \Big( b - \frac{\dt}{3} \Big)_{+} \precsim_{A} b.
\end{equation}

Choose $n \in \N$ and $a_0, b_0, c_0 \in M_n (A)_{+}$
such that
\begin{equation}
\label{Eq2.20200702}
\bigg\| a_0 - \Big( a - \frac{\ep}{3} \Big)_{+} \bigg\|
   < \frac{\ep}{3},
\quad
\bigg\| b_0 - \Big( b - \frac{\dt}{3} \Big)_{+} \bigg\|
   < \frac{\dt}{3},
\quad
\mbox{and}
\quad
\| c_0 - f (b) \| < \frac{1}{3}.
\end{equation}
We use Lemma \ref{PhiB.Lem_18_4}(\ref{Item_9420_LgSb_1_6}) and (\ref{Eq2.20200702}) to get
\begin{equation}\label{Eq_9417_FromIneq_wTRP}
(a - \ep)_{+}
 \precsim_{A} \Big( a_0 - \frac{\ep}{3} \Big)_{+}
 \precsim_{A} \Big( a - \frac{\ep}{3} \Big)_{+}
\end{equation}
and
\begin{equation}\label{Eq_9417_FromIneq_2_wTRP}
(b - \dt)_{+}
 \precsim_{A} \Big( b_0 - \frac{\dt}{3} \Big)_{+}
 \precsim_{A} \Big( b - \frac{\dt}{3} \Big)_{+}.
\end{equation}
Set $d = \big( c_0 - \frac{1}{3} \big)_{+}$.
Clearly $\| d \| > \frac{1}{3}$. So $d \neq 0$.
Since $A$ is not of type~I and is simple,
$\overline{d M_n (A) d}$ is not of type~I and is simple.
Applying Lemma~2.1 of~\cite{Ph14} to $\overline{d M_n(A) d}$, we get $c \in \overline{d M_n(A) d}$ with $\spec (c)= [0, 1]$. 
Then, by the third part of (\ref{Eq2.20200702}),
\begin{equation}\label{Eq_9606_chb}
c \precsim_{A} f (b).
\end{equation}

At the first step
combining the second part of~(\ref{Eq_9417_FromIneq_wTRP}),
(\ref{Eq1_2019_04_14_wTRP}),
and the first part of~(\ref{Eq_9417_FromIneq_2_wTRP}),
we get
\begin{equation}\label{Eq_9417_InA_wTRP}
\Big( a_0 - \frac{\ep}{3} \Big)_{+}
  \precsim_{\CGAa} \Big( b_0 - \frac{\dt}{3} \Big)_{+}
  \precsim_{\CGAa} \Big( b_0 - \frac{\dt}{3} \Big)_{+} \oplus c.
\end{equation}
Since
$
a_0, \, \Big( b_0 - \frac{\dt}{3} \Big)_{+}, \, c
  \in \bigcup_{k = 1}^{\I} M_k (A),
$
it follows from Lemma~\ref{L_CR_Limit},
Part~(\ref{WC_plus_injectivity_a}), $\spec (c)=[0, 1]$, and
~(\ref{Eq_9417_InA_wTRP}) that
\begin{equation}\label{Eq_9423_InAaf_wTRP}
\Big( a_0 - \frac{\ep}{3} \Big)_{+}
  \precsim_{A} \Big( b_0 - \frac{\dt}{3} \Big)_{+} \oplus c.
\end{equation}
Therefore, using the first part of~(\ref{Eq_9417_FromIneq_wTRP}) at the first step,
using (\ref{Eq_9423_InAaf_wTRP}) at the second step,
using (\ref{Eq_9606_chb}) 
and the second part of~(\ref{Eq_9417_FromIneq_2_wTRP}) at the third step,
and using (\ref{Eq_9421_Prop_hb}) at the last step,
\[
(a - \ep)_{+}
 \precsim_{A} \Big( a_0 - \frac{\ep}{3} \Big)_{+}
 \precsim_{A} \Big( b_0 - \frac{\dt}{3} \Big)_{+} \oplus c
 \precsim_{A} \Big( b - \frac{\dt}{3} \Big)_{+} \oplus f (b)
 \precsim_{A} b.
\]
This completes the proof.
\end{proof}
The following corollary is immediate from Theorem~\ref{WC_plus_injectivity}, Proposition~2.8 of \cite{Th20},
Proposition~5.3.16 of \cite{APT18},
 and the fact that  almost unperforation passes to sub-semigroups (with the induced order).
\begin{cor}
Let $A$ be a stably finite simple unital \ca{} which is not of type~I
and let $\alpha \colon G \to \Aut (A)$
be an action of a finite group $G$ on $A$
which is tracially strictly approximately inner.
If $\Cu_+ \big(C^*(G, A, \alpha)\big) \cup \{ 0 \}$ is almost unperforated, then so is $\Cu (A)$.
\end{cor}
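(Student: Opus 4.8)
```latex
\textbf{Proof proposal.}
The statement to be proved is the final corollary: if $A$ is a stably finite
simple unital \ca{} not of type~I, $\alpha\colon G\to\Aut(A)$ is tracially
strictly approximately inner, and $\Cu_{+}\bigl(C^{*}(G,A,\alpha)\bigr)\cup\{0\}$
is almost unperforated, then $\Cu(A)$ is almost unperforated. The plan is to
transport almost unperforation \emph{upward} through the inclusion $\iota\colon
A\to\CGAa$ on the purely positive part, and then to invoke the cited structural
results of Thiel and of Antoine--Perera--Thiel to promote almost unperforation
on $\Cu_{+}(A)\cup\{0\}$ to all of $\Cu(A)$.

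First I would record that, by Theorem~\ref{WC_plus_injectivity}(\ref{WC_plus_injectivity_b}),
the map $\Cu(\iota)$ restricts to an isomorphism of ordered semigroups from
$\Cu_{+}(A)\cup\{0\}$ onto its image $S\subseteq\Cu(\CGAa)$; by
Remark~\ref{RMK_Sta_si} the domain is a genuine subsemigroup of $\Cu(A)$ (with
induced order). Next, since $S$ is a sub-semigroup of $\Cu_{+}(\CGAa)\cup\{0\}$
with the induced order, and almost unperforation is an inequality-testing
property that obviously passes to sub-semigroups with the induced order,
the hypothesis forces $S$ to be almost unperforated; hence so is
$\Cu_{+}(A)\cup\{0\}$, because it is order-isomorphic to $S$. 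At this point the
only remaining task is the implication ``$\Cu_{+}(A)\cup\{0\}$ almost
unperforated $\Longrightarrow$ $\Cu(A)$ almost unperforated''. This is exactly
where Proposition~2.8 of~\cite{Th20} (together with
Proposition~5.3.16 of~\cite{APT18}) enters: for a stably finite simple
C*-algebra, almost unperforation of the Cuntz semigroup is detected on its
purely positive part, because every element of $\Cu(A)$ is either below a
projection class or purely positive, and comparisons involving projection
classes are controlled once one knows the purely positive ones behave well.
So I would quote those two propositions to conclude that $\Cu(A)$ is almost
unperforated.

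The main obstacle is not in the transport argument, which is formal, but in
making the last step genuinely precise: one has to check that the cited
characterization of almost unperforation via the purely positive part applies
in our generality (stably finite, simple, unital, not type~I, but \emph{not}
assumed separable or nuclear), and that the inequalities $(n+1)x\le ny$ that
must be verified can be reduced, using that $A$ is simple and stably finite, to
the case where $x,y\in\Cu_{+}(A)\cup\{0\}$ (handling separately the case where
$x$ or $y$ is the class of a projection, where almost unperforation is
automatic from cancellation/comparison of projections in the simple stably
finite setting). Once that reduction is in place the result follows immediately,
so I would keep the write-up short, citing
Theorem~\ref{WC_plus_injectivity}, Remark~\ref{RMK_Sta_si},
Proposition~2.8 of~\cite{Th20}, and Proposition~5.3.16 of~\cite{APT18}, exactly
as the statement of the corollary already advertises.
```
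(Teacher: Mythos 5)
Your proposal is correct and follows essentially the same route as the paper, whose proof is exactly the one-line combination of Theorem~\ref{WC_plus_injectivity}, the fact that almost unperforation passes to sub-semigroups with the induced order, Proposition~2.8 of \cite{Th20}, and Proposition~5.3.16 of \cite{APT18} to pass from $\Cu_{+}(A)\cup\{0\}$ to all of $\Cu(A)$. The only step you spell out that the paper leaves implicit is that $\Cu(\iota)$ carries $\Cu_{+}(A)\cup\{0\}$ into $\Cu_{+}\big(C^*(G,A,\alpha)\big)\cup\{0\}$, so there is no genuine difference in approach.
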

As we promised, we give a lower bound and an upper bound for 
the radius of comparison of the crossed product by a tracially strictly approximately inner action in the following theorem.
\begin{thm}\label{Th_rc_Tr_inner}
Let $A$ be an infinite-dimensional simple  unital stably finite  \ca{}
and let  $\alpha \colon  G \to \Aut(A)$ be a tracially strictly  approximately  inner action of a 
finite group $G$ on $A$. Then:
\begin{enumerate}
\item\label{Th_rc_Tr_inner.a}
$\rc (A) \leq \rc \big( \CGAa\big)$.
\item\label{Th_rc_Tr_inner.b}
If $C^*(G, A, \alpha)$ is simple, then $\rc (A) \leq \rc \big(C^*(G, A, \alpha)\big) \leq \rc (A^{\alpha})$.
\end{enumerate}
\end{thm}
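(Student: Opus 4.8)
The plan is to follow the template of Theorem~\ref{Thm_rc_In}, replacing the use of Theorem~\ref{WC_injectivity} with Theorem~\ref{WC_plus_injectivity} and being careful about the purely positive/projection dichotomy. First, for part~(\ref{Th_rc_Tr_inner.a}), I would assume $\rc(\CGAa) < \I$ (otherwise there is nothing to prove) and fix $r \in [0, \I)$ for which $\CGAa$ has $r$-comparison. Given $a, b \in M_\I(A)_+$ with $d_\rho(a) + r < d_\rho(b)$ for all $\rho \in \QT(A)$, I would first observe that since $A$ is infinite-dimensional, simple, and stably finite, $r$-comparison in $\CGAa$ forces $a \precsim_{\CGAa} b$ once we check the quasitrace inequality there; this uses the fact that every quasitrace on $\CGAa$ restricts to one on $A$. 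The one genuine new wrinkle is that Theorem~\ref{WC_plus_injectivity} only gives descent of Cuntz subequivalence $a \precsim_{\CGAa} b \implies a \precsim_A b$ when $a$ and $b$ lie in $(\cK \otimes A)_{++}$, i.e. when $0$ is a limit point of both spectra. To handle this, I would use the standard trick (as in \Rmk{RMK_Sta_si} and the proof of \Thm{WC_plus_injectivity}): since $d_\rho(a) < d_\rho(b)$ strictly for all $\rho$, and $A$ is simple with $\QT(A) \neq \E$, one can replace $b$ by a Cuntz-smaller purely positive element $b'$ still dominating $a$ in the relevant sense, and one can cut $a$ down to $(a-\ep)_+$ for small $\ep$ which, together with a small purely positive perturbation, becomes purely positive; more cleanly, $(a-\ep)_+ \oplus (\text{small purely positive}) \precsim b$ works and everything in sight can be taken purely positive. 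Then \Thm{WC_plus_injectivity}(\ref{WC_plus_injectivity_b}) applies to conclude $a \precsim_A b$, hence $\rc(A) \leq r$, and taking the infimum over such $r$ gives $\rc(A) \leq \rc(\CGAa)$.

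For part~(\ref{Th_rc_Tr_inner.b}), the argument is essentially identical to that of Theorem~\ref{Thm_rc_In}(\ref{Thm_rc_In.b}). Assume $\CGAa$ is simple and set $p = \frac{1}{\card(G)} \sum_{g \in G} u_g$, which is then a full projection in $\CGAa$. With $\lambda$ and $\eta$ the infimum and supremum of $\rho(p)$ over $\rho \in \QT(\CGAa)$ as in \Thm{Ourcornertheorem}, we have $0 < \lambda \leq \eta \leq 1$, and \Thm{Ourcornertheorem} gives $\frac{1}{\eta}\rc(\CGAa) \leq \rc(p\CGAa p) \leq \frac{1}{\lambda}\rc(\CGAa)$. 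Since $p\CGAa p \cong A^\alpha$ (Lemma~4.3(4) of \cite{AGP19}), combining with part~(\ref{Th_rc_Tr_inner.a}) and $\eta \leq 1$ yields the chain
\[
\rc(A) \leq \rc(\CGAa) \leq \tfrac{1}{\eta}\rc(\CGAa) \leq \rc(p \CGAa p) = \rc(A^\alpha),
\]
which is the claim.

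\textbf{Main obstacle.} The routine parts (quasitrace restriction, the $\frac{1}{\card(G)}\sum u_g$ projection being full by simplicity, applying \Thm{Ourcornertheorem}) carry over verbatim from \Thm{Thm_rc_In}. The one place that requires genuine care is the reduction to purely positive elements in part~(\ref{Th_rc_Tr_inner.a}): one must verify that the strict inequality $d_\rho(a) + r < d_\rho(b)$ survives the passage to $(a-\ep)_+$ and to a purely positive replacement of $b$, so that \Thm{WC_plus_injectivity} rather than \Thm{WC_injectivity} can be invoked — and that one does not accidentally need Cuntz descent for an element class that happens to be a projection class, where \Thm{WC_plus_injectivity} says nothing. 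The safe route is to argue via \Lem{PhiB.Lem_18_4}(\ref{PhiB.Lem_18_4_11}): show $(a-\ep)_+ \precsim_A b$ for every $\ep > 0$, and for each fixed $\ep$ choose a continuous function supported near $0$ to manufacture a small purely positive summand (exactly as $f(b)$ is used in \Lem{L_CR_Limit} and \Thm{WC_plus_injectivity}), reducing to a comparison $\big(a_0 - \tfrac{\ep}{3}\big)_+ \precsim_{\CGAa} b_0 \oplus c$ with $\spec(c) = [0,1]$ in a matrix algebra over $A$, to which \Thm{WC_plus_injectivity}(\ref{WC_plus_injectivity_a}) applies. This is the only conceptual work; everything else is bookkeeping.
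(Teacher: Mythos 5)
Your part~(\ref{Th_rc_Tr_inner.b}) is fine and is exactly the paper's argument (full projection $p=\frac{1}{\card(G)}\sum_g u_g$, Theorem~\ref{Ourcornertheorem}, Lemma~4.3(4) of \cite{AGP19}, with part~(\ref{Th_rc_Tr_inner.a}) in place of Theorem~\ref{Thm_rc_In}(\ref{Thm_rc_In.a})). But in part~(\ref{Th_rc_Tr_inner.a}) there is a genuine gap, and it sits precisely at the step you flag as the ``main obstacle''. Your plan runs the quasitrace definition of $r$-comparison as in Theorem~\ref{Thm_rc_In}(\ref{Thm_rc_In.a}) and then descends $a\precsim_{\CGAa}b$ to $a\precsim_A b$, which in the tracial setting is only available when the target is purely positive (Lemma~\ref{L_CR_Limit} needs $0$ to be a limit point of $\spec(b)$; Theorem~\ref{WC_plus_injectivity} needs both elements in $(\cK\otimes A)_{++}$). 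Your proposed fix --- ``manufacture a small purely positive summand $f(b)$'' --- produces nothing exactly in the problematic case: if $\langle b\rangle_A$ is the class of a projection, then $f(b)=0$ for any $f$ supported near $0$. The fallback you state, ``replace $b$ by a Cuntz-smaller purely positive $b'$ still dominating $a$ in the relevant sense,'' is the whole difficulty and is not justified: the hypothesis is only the pointwise strict inequality $d_\rho(a)+r<d_\rho(b)$ over the compact set $\QT(A)$, and since $\rho\mapsto d_\rho(a)$ and $\rho\mapsto d_\rho(b)$ are merely lower semicontinuous, the gap need not be bounded below uniformly; hence one cannot shave a fixed trace amount off $b$ (nor add a fixed nonzero purely positive summand to $a$, whose $d_\rho$ is bounded below on $\QT$ by simplicity) without possibly destroying the inequality at some quasitrace. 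A secondary point: even where your $f(b)$ trick applies, you invoke Theorem~\ref{WC_plus_injectivity}(\ref{WC_plus_injectivity_a}), which also requires the \emph{source} $(a_0-\tfrac{\ep}{3})_+$ to be purely positive; the correct tool is Lemma~\ref{L_CR_Limit} together with Lemma~\ref{L_EXt_T.S.A.I}, which only constrain the target.

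The paper avoids all of this by not using the quasitrace formulation at the start. It uses the algebraic characterization of $\rc$ for simple algebras (Proposition~\ref{rc_alg_def}): from $(n+1)\langle a\rangle_A+m\langle 1\rangle_A\le n\langle b\rangle_A$ with $\tfrac{m}{n}>r$, it amplifies to get $\tfrac{ml}{nl+1}>r$, passes to $(a-\ep)_+$ and $(b-\dt)_+$, and then applies Lemma~2.7 of \cite{Ph14} to produce $c$ and a \emph{nonzero} leftover $d$ with $nl\langle(b-\dt)_+\rangle_A\le (nl+1)\langle c\rangle_A$ and $\langle c\rangle_A+\langle d\rangle_A\le\langle b\rangle_A$. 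Applying $d_\rho$ to the resulting inequality in $\W(\CGAa)$ gives $d_\rho((a-\ep)_+)+\tfrac{ml}{nl+1}\le d_\rho(c)$ for all $\rho\in\QT(\CGAa)$, so $r$-comparison there yields $(a-\ep)_+\precsim_{\CGAa}c$; replacing $d$ by an element of full spectrum $[0,1]$ (Lemma~2.1 of \cite{Ph14}) makes $c\oplus d$ purely positive, so Lemma~\ref{L_CR_Limit} descends the relation and $(a-\ep)_+\precsim_A c\oplus d\precsim_A b$. In other words, the nonzero ``room'' that lets one split off a purely positive piece is created algebraically from the integer slack $\tfrac{ml}{nl+1}>r$, not from a uniform quasitrace gap; this is the missing ingredient your proposal would need, so as written the projection-class case of part~(\ref{Th_rc_Tr_inner.a}) is not proved.
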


\begin{proof}
To prove (\ref{Th_rc_Tr_inner.a}),
we use Proposition~\ref{rc_alg_def}. 
So, let $r \in [0, \I)$. Suppose that $\CGAa$ has $r$-comparison. Let $m, n \in \N$ satisfy $\frac{m}{n} > r$.
Let $k \in \N$ and let $a, b \in ( A \otimes M_k )_{+}$
with $\| a \| = \| b \| = 1$ satisfy
\begin{equation}\label{Eq2.20200424}
(n + 1) \langle a \rangle_{A}
    + m \langle 1 \rangle_{A}
 \leq n \langle b \rangle_{A}.
\end{equation}
By \Lem{L_EXt_T.S.A.I}, without loss of generality we can assume $k = 1$.
We have to prove that $a \precsim_{A} b$.
By \Lem{PhiB.Lem_18_4}(\ref{PhiB.Lem_18_4_11.b}), it suffices to show that
for every $\ep > 0$,
 we have $(a - \ep)_{+} \precsim_{A} b$.

So let $\ep > 0$.
\Wolog{} $\ep < \frac{1}{4}$.
Choose $l \in \N$ such that
\begin{equation}\label{Eq_9422_kmkn1}
\frac{ml}{ nl + 1} > r.
\end{equation}
Then, by (\ref{Eq2.20200424}),
\begin{equation}\label{Eq3.20200424}
(nl + 1) \langle a \rangle_{A}
 + ml \langle 1 \rangle_{A}
  \leq l(n + 1) \langle a \rangle_{A}
     + m l \langle 1 \rangle_{A}
  \leq n l \langle b \rangle_{A}.
\end{equation}
We define the following elements:
\begin{itemize}
\item
$x \in M_{\infty} (A)_{+}$
is the direct sum of $nl + 1$ copies of~$a$.
\item
$y \in M_{\infty} (A)_{+}$
is the direct sum of $nl$ copies of~$b$.
\item
$p \in M_{\infty} (A)_{+}$
is the direct sum of $ml$ copies of~$1_A$.
\end{itemize}

By (\ref{Eq3.20200424}), we have $x \oplus p \precsim_{A} y$.
Then, by \Lem{PhiB.Lem_18_4}(\ref{PhiB.Lem_18_4_11.c}), there exists $\dt > 0$ such that
$\big( x \oplus p - \ep \big)_{+}
 \precsim_{A} (y - \dt)_{+}$.
Using $\ep < \frac{1}{4}$ at the last step , we get
 \[
( x \oplus p - \ep )_{+}
  = ( x - \ep )_{+}
     \oplus ( p - \ep )_{+}
  \sim_{A} ( x - \ep )_{+} \oplus p,
\]
and therefore
\begin{equation}\label{Eq_9606_Star}
(nl + 1)
    \langle ( a - \ep )_{+} \rangle_{A}
              + ml \langle 1 \rangle_{A}
  \leq nl \langle (b - \dt)_{+} \rangle_{A}.
\end{equation}
Applying Lemma~2.7 of~\cite{Ph14} with $\dt$ in place of $\ep$ and with $nl$ in place of $n$, we get
elements $c \in {A_+}$ and $d \in {A_+} \setminus \{ 0 \}$
such that
\begin{equation}\label{Eq_5513_cyb}
nl \langle (b - \dt)_{+} \rangle_{A}
   \leq (nl + 1) \langle c \rangle_{A}
\andeqn
\langle c \rangle_{A} + \langle d \rangle_{A}
  \leq \langle b \rangle_{A}.
\end{equation}
Then, by (\ref{Eq_9606_Star}) and~(\ref{Eq_5513_cyb}),
\[
(nl + 1)
  \langle (a - \ep)_{+} \rangle_{A}
              +  ml \langle 1 \rangle_{A}
  \leq (nl + 1) \langle c \rangle_{A}.
\]
This relation also holds in $\W (\CGAa)$.
For all $\rho \in \QT \big(\CGAa \big)$,
apply $d_{\rho}$ and divide by $nl + 1$
to get
\begin{equation}
\label{Eq3.20200702}
d_{\rho} ( (a - \ep)_{+} ) + \frac{ml}{nl + 1}
\leq d_{\rho} (c).
\end{equation}
We now use (\ref{Eq_9422_kmkn1}) and (\ref{Eq3.20200702}) to get 
\begin{equation}
\label{Eq4.20200702}
(a - \ep)_{+} \precsim_{\CGAa} c.
\end{equation}
Since $A$ is simple and is  not of type~I,
$\overline{dAd}$ also is simple and is not of type~I. Now, applying Lemma~2.1 of~\cite{Ph14} to $\overline{dAd}$,
we can choose $d_0 \in \overline{dAd}$ with $\spec (d_0)= [0, 1]$.
Replacing $d$ with this element,
we may assume that $\spec (d)= [0, 1]$.
Therefore,
using Lemma~\ref{L_CR_Limit}
with $c \oplus d$ in place of $b$ and (\ref{Eq4.20200702}) at the first step
and using the second part of (\ref{Eq_5513_cyb}) at the second step,
\[
(a - \ep)_{+} \precsim_{A} c \oplus d
  \precsim_{A} b.
\]
Therefore $\rc (A) \leq r$.
Taking the infimum over $r \in [0, \I)$
such that $\CGAa$ has $r$-comparison,
we get $\rc (A) \leq \rc (\CGAa)$.  
This completes the proof of (\ref{Th_rc_Tr_inner.a}).

The proof of (\ref{Th_rc_Tr_inner.b}) is as same as the proof of Theorem~\ref{Thm_rc_In}(\ref{Thm_rc_In.b}),
 except that we now apply
(\ref{Th_rc_Tr_inner.a}) instead of Theorem~\ref{Thm_rc_In}(\ref{Thm_rc_In.a}).
\end{proof}
In the following proposition, we show that
actions of finite groups on many  nonclassifiable simple  unital C*-algebras cannot simultaneously
 have the weak tracial Rokhlin property (Definition~3.2 of \cite{AGP19}) and be tracially strictly approximately inner.
Note that we can deduce Proposition~\ref{Pr.In.Ro} from the following proposition
in the setting of infinite-dimensional simple C*-algebras.
\begin{prp}
\label{Pr.Tracial.In.Ro}
Let $A$ be an infinite-dimensional simple  unital stably finite \ca{} with $0<\rc (A)< \infty$.
Then there is no action of any nontrivial finite group on $A$ which both has 
the weak tracial Rokhlin property and is tracially strictly approximately inner.
\end{prp}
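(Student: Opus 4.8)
The plan is to mimic the proof of Proposition~\ref{Pr.In.Ro}, but using the weak tracial Rokhlin property version of the results from \cite{AGP19} in place of the Rokhlin property version, and replacing Theorem~\ref{Thm_rc_In} by Theorem~\ref{Th_rc_Tr_inner}. Suppose for contradiction that $\af\colon G\to\Aut(A)$ is an action of a nontrivial finite group~$G$ which has the weak tracial Rokhlin property and is tracially strictly approximately inner. Since $A$ is stably finite, simple, unital, and infinite-dimensional, and $\af$ has the weak tracial Rokhlin property, the crossed product $\CGAa$ is again simple and stably finite (this is standard; see \cite{AGP19}), so Theorem~\ref{Th_rc_Tr_inner}(\ref{Th_rc_Tr_inner.b}) applies.

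First I would record that the weak tracial Rokhlin property forces $\tau(p)=\frac{1}{\card(G)}$ for every $\tau$ in the relevant quasitrace space of $\CGAa$, where $p=\frac{1}{\card(G)}\sum_{g\in G}u_g$; this is the tracial analogue of Theorem~10.3.11 of \cite{GKPT18} and should be available from \cite{AGP19} (it is essentially Lemma~4.3(6) together with the computation of traces of~$p$ there). Consequently, in Theorem~\ref{Ourcornertheorem} applied with $q=p$ in $\CGAa$, we get $\lambda=\eta=\frac{1}{\card(G)}$, so that
\[
\rc\big(p\,\CGAa\, p\big)=\card(G)\cdot\rc\big(\CGAa\big),
\]
and since $p\,\CGAa\,p\cong A^\af$ (Lemma~4.3(4) of \cite{AGP19}), this yields $\rc\big(\CGAa\big)=\frac{1}{\card(G)}\cdot\rc(A^\af)$.

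Next I would invoke the tracial analogue of Theorem~4.2 of \cite{AGP19}, which under the weak tracial Rokhlin property gives $\rc(A^\af)\le\card(G)\cdot\rc(A)$, equivalently $\rc\big(\CGAa\big)\le\rc(A)$. Combining with Theorem~\ref{Th_rc_Tr_inner}(\ref{Th_rc_Tr_inner.a}), which gives $\rc(A)\le\rc\big(\CGAa\big)$, we obtain
\[
\rc(A)\le\rc\big(\CGAa\big)=\tfrac{1}{\card(G)}\cdot\rc(A^\af)\le\tfrac{1}{\card(G)}\cdot\rc(A).
\]
Since $G$ is nontrivial, $\card(G)\ge 2$, so this forces $\rc(A)\le\frac{1}{2}\rc(A)$, hence $\rc(A)\le 0$ or $\rc(A)=\infty$; combined with the hypothesis $0<\rc(A)<\infty$, this is a contradiction, completing the proof.

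The main obstacle I anticipate is verifying that the weak tracial Rokhlin property genuinely supplies the three ingredients used as black boxes: (i) simplicity and stable finiteness of $\CGAa$, so that Theorem~\ref{Th_rc_Tr_inner}(\ref{Th_rc_Tr_inner.b}) and Theorem~\ref{Ourcornertheorem} apply; (ii) the exact value $\tau(p)=\frac{1}{\card(G)}$ on all quasitraces of $\CGAa$; and (iii) the inequality $\rc(A^\af)\le\card(G)\cdot\rc(A)$. All three are proved in \cite{AGP19} (for (ii) and (iii), these are precisely the analogues of the facts cited in the proof of Proposition~\ref{Pr.In.Ro}, namely Lemma~4.3(6), the trace computation, and Theorem~4.2 there), so the contribution here is essentially to check that the hypotheses line up and to assemble the chain of inequalities; no new estimate is required.
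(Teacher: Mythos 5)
Your overall strategy (trapping $\rc(A)$ between itself and a fraction of itself) is the right one, but the specific route through the fixed point algebra has two genuine problems. First, your chain of inequalities does not close: the ingredient you state is $\rc(A^{\af}) \leq \card(G) \cdot \rc(A)$, but the final step of your display, $\frac{1}{\card(G)} \cdot \rc(A^{\af}) \leq \frac{1}{\card(G)} \cdot \rc(A)$, requires the stronger bound $\rc(A^{\af}) \leq \rc(A)$. With the ingredient as you state it you only obtain $\rc(A) \leq \rc\big(\CGAa\big) \leq \rc(A)$, which is no contradiction at all. Second, the black boxes (ii) and (iii) are not available from the cited sources under the weak tracial Rokhlin property: the exact value $\tau(p) = \frac{1}{\card(G)}$ on all quasitraces of $\CGAa$ is Theorem~10.3.11 of \cite{GKPT18} only for the Rokhlin property, where the Rokhlin elements are honest projections summing to $1$; in the weak tracial setting the towers are positive contractions with a small remainder, and no exact (quasi)trace computation of $p$ follows without a separate argument (the quasitrace case is especially delicate, since one cannot simply compose with the conditional expectation). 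Likewise \cite{AGP19} does not prove a fixed-point-algebra comparison bound (neither $\rc(A^{\af}) \leq \rc(A)$ nor your weaker version) under the weak tracial Rokhlin property, so these steps are currently unproved claims rather than citations.

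The paper's proof avoids the fixed point algebra, the trace of $p$, and Theorem~\ref{Ourcornertheorem} entirely: it combines Theorem~\ref{Th_rc_Tr_inner}(\ref{Th_rc_Tr_inner.a}), which gives $\rc(A) \leq \rc\big(\CGAa\big)$ from tracial strict approximate innerness, with Theorem~4.6 of \cite{AGP19}, which states directly that the weak tracial Rokhlin property yields $\rc\big(\CGAa\big) \leq \frac{1}{\card(G)} \cdot \rc(A)$; since $\card(G) \geq 2$, this forces $\rc(A) = 0$ or $\rc(A) = \infty$, contradicting $0 < \rc(A) < \infty$. To repair your argument, replace your ingredients (ii) and (iii) by this single citation (as in Corollary~\ref{Cor.R.I.almost}); otherwise you would have to actually prove the exact trace value of $p$ and a fixed-point comparison bound in the weak tracial setting, neither of which is routine.
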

\begin{proof}
The proof is essentially the same as the proof of Proposition~\ref{Pr.In.Ro},
 except that we now
apply Theorem~4.6 of \cite{AGP19} and Theorem~\ref{Th_rc_Tr_inner}(\ref{Th_rc_Tr_inner.a}).
\end{proof}
The assumption that $\rc(A)>0$ is necessary for Proposition~\ref{Pr.Tracial.In.Ro}
(see Example~\ref{Count.Examp} or  Example~\ref{Ex_1_Z_2}). Proposition~\ref{Pr.Tracial.In.Ro}
might also hold for $\rc (A) = \infty$, but the proof does not cover
this case.

In the following corollary, we show that if actions of finite groups on  simple  unital C*-algebras simultaneously
 have the weak tracial Rokhlin property and is tracially strictly approximately inner, then the C*-algebras have to be classifiable.
\begin{cor}\label{Cor.R.I.almost}
Let $A$ be an infinite-dimensional simple  unital stably finite \ca{} with $\rc (A)< \infty$ and 
let  $\alpha \colon  G \to \Aut(A)$ be an action of a 
(nontrivial) finite  group $G$ on $A$.
If $\alpha$ has the weak tracial Rokhlin property and is tracially strictly approximately inner. Then
$\Cu (A)$ is almost unperforated.
\end{cor}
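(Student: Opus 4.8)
The plan is to reduce the statement to two facts: first, that the hypotheses force $\rc (A) = 0$; and second, that for a simple stably finite unital C*-algebra the condition $\rc (A) = 0$ implies that $\Cu (A)$ is almost unperforated.

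For the first step, suppose toward a contradiction that $\rc (A) > 0$. Together with the hypothesis $\rc (A) < \infty$, this gives $0 < \rc (A) < \infty$. Since $G$ is nontrivial and $\alpha$ simultaneously has the weak tracial Rokhlin property and is tracially strictly approximately inner, this contradicts Proposition~\ref{Pr.Tracial.In.Ro}. Hence $\rc (A) = 0$.

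For the second step, note that $\QT (A) \neq \varnothing$ since $A$ is stably finite and unital, and that, since $A$ is simple, $A$ has $\rc (A)$-comparison (Proposition~6.3 of \cite{Tom06}; see the remark after Definition~\ref{rc_dfn}). Thus $A$ has $0$-comparison: whenever $a, b \in M_{\infty} (A)_{+}$ satisfy $d_{\rho} (a) < d_{\rho} (b)$ for all $\rho \in \QT (A)$, then $a \precsim_A b$. One then obtains almost unperforation of $\Cu (A) = \W (\cK \otimes A)$ in the usual way. Given $x = \langle a \rangle_A$ and $y = \langle b \rangle_A$ in $\Cu (A)$ with $(n+1) x \leq n y$: if $y = 0$ then $x \leq (n+1) x \leq n y = 0$, so $x = 0 \leq y$; if $y \neq 0$, then after the routine reduction to the case where $x$ is the class of $(a - 1/m)_{+}$ with $(a - 1/m)_{+}$ Cuntz equivalent to an element of some $M_k (A)_{+}$, and using that every $\rho \in \QT (A)$ is faithful on the simple algebra $A$, one gets $d_{\rho} ( (a - 1/m)_{+} ) \leq \frac{n}{n+1} d_{\rho} (b) < d_{\rho} (b)$ for all $\rho$, whence $(a - 1/m)_{+} \precsim_A b$; letting $m \to \infty$ gives $x \leq y$. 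Since this implication is exactly the well-known equivalence, for simple stably finite unital C*-algebras, between $\rc (A) = 0$ and almost unperforation of $\Cu (A)$ (cf.\ Proposition~\ref{rc_alg_def}, Proposition~2.8 of \cite{Th20}, and Proposition~5.3.16 of \cite{APT18}), one may also simply invoke it directly.

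The only substantive input is Proposition~\ref{Pr.Tracial.In.Ro}, which is already established; the remainder is bookkeeping. The one point requiring care lies in the second step: the reduction from $\cK \otimes A$ to $M_{\infty} (A)$ and the semicontinuity and compactness argument on $\QT (A)$ needed to upgrade the pointwise inequalities $d_{\rho} ( (a - 1/m)_{+} ) < d_{\rho} (b)$ to a genuine Cuntz subequivalence. I expect this to be the main technical point, but it is standard and is handled exactly as in the references cited above.
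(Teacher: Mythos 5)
Your proposal is correct and follows essentially the same route as the paper: the paper obtains $\rc (A) = 0$ by chaining Theorem~\ref{Th_rc_Tr_inner}(\ref{Th_rc_Tr_inner.a}) with Theorem~4.6 of \cite{AGP19} to get $\rc (A) \leq \rc \big( \CGAa \big) \leq \frac{1}{\card (G)} \cdot \rc (A)$, which is exactly the argument underlying Proposition~\ref{Pr.Tracial.In.Ro} that you invoke by contradiction, and then concludes almost unperforation of $\Cu (A)$ from $\rc (A) = 0$ by citing Proposition~6.4 of \cite{Tom06}, in place of your sketch of that standard implication.
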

\begin{proof}
We use Theorem~\ref{Th_rc_Tr_inner}(\ref{Th_rc_Tr_inner.a}) and Theorem~4.6 of \cite{AGP19} to get
\[
\rc(A) \leq \rc (C^*(G, A, \alpha)) \leq \frac{1}{\card (G)} \cdot \rc(A).
\]
This relation implies that $\rc (A) =0$. Then, by Proposition~6.4 of \cite{Tom06}, $\Cu (A)$ is almost unperforated.
\end{proof}
It is plausible to hope that Corollary~\ref{Cor.R.I.almost} might be true 
if almost unperforation of $\Cu (A)$ is replaced by tracial $\mathcal{Z}$-stability of $A$.
%
\section{Examples}
\label{Sec_Example}
In this section, we give examples which satisfy the hypothesis of Theorem~\ref{Th_rc_Tr_inner}.
In Example~\ref{Ex_1_Z_2} and Example~\ref{Ex_2_Z_2}, the  groups are abelian and the C*-algebras are classifiable.
In the case that $G$ is a finite group (not necessarily abelian)
 and $A$ is a nonclassifiable C*-algebra, we show that
  for any finite group (not necessarily abelian) and any $r \in \big(0, \frac{1}{\card (G)}\big)$,
there exist a nonclassifiable C*-algebra $A$ and a
strictly approximately inner action $\alpha \colon G \to \Aut (A)$, which is also pointwise outer, such that 
$\rc (A)= \rc \big( C^*(G, A, \alpha)\big)=r$.

The following example is taken from Example~2.8 of \cite{Ph15}.
\begin{exa}\label{Ex_1_Z_2}
Let $D$ be the $2^{\infty}$ UHF algebra and let  $\alpha$ be the automorphism of order $2$ given by
\[
\alpha= \bigotimes_{n=1}^{\infty} \Ad \big(1_{2^{n-1} +1} \oplus (-1_{2^{n-1} -1})\big).
\]
\begin{enumerate}
\item
The action $\alpha$ is approximately representable
and therefore it is strictly approximately inner.
\item
The dual action $\widehat{\alpha}$ has the strict Rokhlin property.
\item
The dual action $\widehat{\alpha}$ is not strictly approximately inner 
since $\widehat{\alpha}$ is nontrivial on $K_0 \big( C^* (\mathbb{Z} /2\mathbb{Z}, D, \alpha) \big)$.
\item
The dual action $\widehat{\alpha}$ is tracially strictly approximately representable.
\item
$C^* (\mathbb{Z} /2\mathbb{Z}, D, \alpha)$ 
is a  simple unital AF algebra with a unique tracial state by Example 2.8 of \cite{Ph15}.
\item
We have 
$\rc (D)= \rc \big( C^* (\mathbb{Z} /2\mathbb{Z}, D, \alpha) \big)=\rc (D^{\alpha})=0$.
\end{enumerate}
\end{exa}
\begin{exa}\label{Ex_2_Z_2}
Let $A$ and $\alpha \in \Aut(A)$ be as in Example 4.1 of \cite{Ph15}.
\begin{enumerate}
\item
The action of $\mathbb{Z} /2\mathbb{Z}$ generated by $\alpha$ 
is tracially approximately representable but not approximately representable.
Therefore it is tracially strictly approximately inner.
\item
$A$ is a simple unital AF-algebra with a unique tracial state.
\item
$C^* (\mathbb{Z} /2\mathbb{Z}, A, \alpha)$ 
is a simple unital AH algebra with no dimension growth, tracial
rank zero, and a unique tracial state.
\item
$\rc (A)= \rc \big( C^* (\mathbb{Z} /2\mathbb{Z}, A, \alpha) \big)=\rc (A^{\alpha})=0$.
\end{enumerate}
\end{exa}
From now on, we concentrate on the setting of nonclassifable C*-algebras and nonabelian finite groups.
\begin{ntn}
Let $G$ be a topological group,
let $H_1$ and $H_2$ be Hilbert spaces,
and let $w_1 \colon G \to \U (H_1)$ and $w_2 \colon G \to \U (H_2)$
be unitary representations.
 We denote by $w_1 \otimes w_2$ the unitary
representation on $H_1 \otimes H_2$ such that 
\[
(w_1 \otimes w_2) (g) =
w_1 (g) \otimes w_2 (g)
\]
 for all $g \in G$.
\end{ntn}

\begin{ntn}\label{D_3407_DRegRep}
Let $G$ be a discrete group.
The {\emph{left regular representation}}
of $G$ is the representation  $v \colon G \to \U (l^2 (G))$
given by
$(v (g) \xi) (h) = \xi (g^{-1} h)$
for all $g, h \in G$ and all $\xi \in l^2 (G)$.
\end{ntn}

\begin{lem}[Fell Absorption Principle]
\label{Left_R_absorb}
Let $G$ be a discrete group, let $v$ be the left regular representation of $G$ on $l^2 (G)$,
and let $w$ be a unitary representation of $G$ on a Hilbert space $H$.
Then 
$v \otimes w$ is unitarily equivalent
to the tensor product of $v$ and the trivial representation
of $G$ on~$H$.
\end{lem}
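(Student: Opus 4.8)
The plan is to exhibit an explicit unitary intertwining the two representations. First I would identify the Hilbert space $l^2 (G) \otimes H$ with $l^2 (G, H)$, the space of square-summable functions $\xi \colon G \to H$, via $\delta_h \otimes \zeta \mapsto (k \mapsto \delta_{h, k} \zeta)$ for $h \in G$ and $\zeta \in H$. Under this identification, and using the convention for $v$ from \Ntn{D_3407_DRegRep} (so that $v (g) \delta_h = \delta_{gh}$), a short computation gives
\[
\big( (v \otimes w) (g) \xi \big) (h) = w (g) \xi (g^{-1} h)
\andeqn
\big( (v \otimes 1_H) (g) \xi \big) (h) = \xi (g^{-1} h),
\]
where $1_H$ denotes the trivial representation of $G$ on $H$ (that is, $1_H (g) = \mathrm{id}_H$ for all $g \in G$).

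Next I would define $U \colon l^2 (G, H) \to l^2 (G, H)$ by $(U \xi) (h) = w (h^{-1}) \xi (h)$ for all $h \in G$. Since each $w (h^{-1})$ is a unitary on $H$, the map $U$ preserves the norm and admits the two-sided inverse $(U^{-1} \eta) (h) = w (h) \eta (h)$; hence $U$ is unitary.

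Finally I would check the intertwining relation $U (v \otimes w) (g) U^{-1} = (v \otimes 1_H) (g)$ for every $g \in G$ by direct calculation: for $\eta \in l^2 (G, H)$ and $h \in G$,
\[
\big( U (v \otimes w) (g) U^{-1} \eta \big) (h)
 = w (h^{-1}) w (g) w (g^{-1} h) \eta (g^{-1} h)
 = \eta (g^{-1} h)
 = \big( (v \otimes 1_H) (g) \eta \big) (h),
\]
since $w (h^{-1}) w (g) w (g^{-1} h) = w (h^{-1} g g^{-1} h) = w (e) = \mathrm{id}_H$. This shows that $v \otimes w$ and $v \otimes 1_H$ are unitarily equivalent, which is the assertion. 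There is no genuine obstacle in this argument; the only point requiring care is to keep the left/right conventions for the left regular representation and the ordering of the tensor factors consistent throughout, so that the cancellation $w (h^{-1}) w (g) w (g^{-1} h) = \mathrm{id}_H$ comes out correctly.
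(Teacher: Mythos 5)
Your argument is correct and complete: the identification of $l^2 (G) \otimes H$ with $l^2 (G, H)$, the formulas for $(v \otimes w) (g)$ and $(v \otimes 1_H) (g)$ under this identification, the unitarity of $U$ given by $(U \xi) (h) = w (h^{-1}) \xi (h)$, and the cancellation $w (h^{-1}) w (g) w (g^{-1} h) = w (e) = \mathrm{id}_H$ are all consistent with the convention $(v (g) \xi) (h) = \xi (g^{-1} h)$ fixed in the paper's notation for the left regular representation. The only difference from the paper is that the paper offers no argument at all for this lemma: it simply refers to the proof of Theorem~9.5.7 of \cite{GKPT18}, where essentially this same standard intertwining unitary is used. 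So you have not taken a genuinely different route; you have written out, self-contained and correctly, the classical proof that the paper delegates to the literature, which is arguably preferable for readability since the statement is short and the verification is a two-line computation.
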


\begin{proof}
It was shown in the proof of Theorem~9.5.7 in \cite{GKPT18}.
\end{proof}

The following lemma is Exercise 32 in Section 1.5 of  \cite{FR99}.
\begin{lem}\label{Folland_Exr}
Let  $(\lambda_j)_{j=1}^{\infty}$ be a sequence in $(0, 1)$. Then
 $\prod_{j=1}^{\infty} (1 - \lambda_j)>0$
if and only if $\sum_{j=1}^{\infty} \lambda_j < \infty$.

\end{lem}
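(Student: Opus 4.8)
The plan is to reduce everything to the elementary two‑sided comparison of $1-x$ with an exponential. Write $P_N = \prod_{j=1}^{N}(1-\lambda_j)$; since every factor lies in $(0,1)$, the sequence $(P_N)_{N \in \N}$ is strictly decreasing and bounded below by $0$, hence it converges to some $P \in [0,1)$, and the assertion to be proved is exactly the criterion distinguishing $P=0$ from $P>0$.

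For the implication $\prod_{j=1}^{\infty}(1-\lambda_j) > 0 \Rightarrow \sum_{j=1}^{\infty}\lambda_j < \infty$ I would argue the contrapositive: assuming $\sum_{j=1}^{\infty}\lambda_j = \infty$, the inequality $1 - \lambda_j \leq e^{-\lambda_j}$ (valid for all real arguments) gives
\[
P_N \leq \exp\Bigl(-\sum_{j=1}^{N}\lambda_j\Bigr) \longrightarrow 0 \qquad (N \to \infty),
\]
so $P = 0$ and the infinite product vanishes.

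For the converse, suppose $\sum_{j=1}^{\infty}\lambda_j < \infty$. Then $\lambda_j \to 0$, so I may fix $M \in \N$ with $\lambda_j \leq \tfrac12$ for all $j \geq M$. On $[0,\tfrac12]$ one has $-\log(1-x) = \sum_{k \geq 1} x^k/k \leq \sum_{k \geq 1} x^k = x/(1-x) \leq 2x$, i.e. $1-x \geq e^{-2x}$; applying this to $\lambda_M, \lambda_{M+1}, \ldots$ yields, for every $N \geq M$,
\[
\prod_{j=M}^{N}(1-\lambda_j) \geq \exp\Bigl(-2\sum_{j=M}^{N}\lambda_j\Bigr) \geq \exp\Bigl(-2\sum_{j=M}^{\infty}\lambda_j\Bigr) > 0.
\]
Since $\prod_{j=1}^{M-1}(1-\lambda_j)$ is a positive constant and $P_N = \bigl(\prod_{j=1}^{M-1}(1-\lambda_j)\bigr)\prod_{j=M}^{N}(1-\lambda_j)$ for $N \geq M$, the numbers $P_N$ are bounded below by a positive constant independent of $N$, so $P > 0$.

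There is no genuine obstacle: the only place needing a little care is that the lower comparison $1-x \geq e^{-2x}$ is valid only for $x \leq \tfrac12$, which forces one to split off the finitely many initial indices $j < M$ and treat their (automatically positive) product as a separate constant factor.
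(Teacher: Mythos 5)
Your proof is correct. Note that the paper itself gives no argument for this lemma at all: it is simply quoted as Exercise~32 in Section~1.5 of \cite{FR99}, so there is nothing internal to compare your proof against. What you wrote is the standard self-contained argument, and both directions are sound: the bound $1-x \leq e^{-x}$ kills the product when $\sum_j \lambda_j = \infty$, and the bound $1-x \geq e^{-2x}$ on $[0,\tfrac12]$ (justified correctly via $-\log(1-x) \leq x/(1-x) \leq 2x$), applied only to the tail where $\lambda_j \leq \tfrac12$ with the finitely many initial factors split off as a positive constant, gives a uniform positive lower bound on the partial products in the convergent case. Your argument thus supplies a complete elementary proof where the paper merely cites one.
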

\begin{lem}\label{Special_seq}
Let $r \in (0, 1)$ and let $m \in \N$. Then there exists a nondecreasing sequence 
$(d(n))_{n \in \N}$ in $\N$ such that 
\[
\lim_{n \to \infty} d(n)= \infty
\qquad
\mbox{and}
\qquad
\prod_{n= 1}^{\infty} \left( 1 -  \frac{m}{d(n)+m} \right) = r.
\]
\end{lem}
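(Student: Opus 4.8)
The plan is to pass to logarithms. Since $1 - \frac{m}{d+m} = \frac{d}{d+m}$, I would set $a(d) = \log\!\big(1 + \frac{m}{d}\big) = -\log\frac{d}{d+m}$ for $d \in \N$ and $s = -\log r$, so $s > 0$ because $r \in (0,1)$. I would then produce a nondecreasing sequence $(d(n))_{n \in \N}$ in $\N$ with $d(n) \to \infty$ and $\sum_{n=1}^{\infty} a(d(n)) = s$. Granting this, put $\lambda_n = \frac{m}{d(n)+m} \in (0,1)$; the elementary inequality $\log(1+x) \ge \frac{x}{1+x}$ for $x > 0$ gives $\lambda_n \le a(d(n))$, so $\sum_{n} \lambda_n \le s < \infty$, and Lemma~\ref{Folland_Exr} yields $\prod_{n}(1 - \lambda_n) > 0$. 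The (decreasing) partial products therefore converge to a positive limit, so logarithms may be taken term by term: $\log \prod_{n}(1-\lambda_n) = \sum_{n} \log(1-\lambda_n) = -\sum_{n} a(d(n)) = -s = \log r$, which is exactly $\prod_{n}\big(1 - \frac{m}{d(n)+m}\big) = r$.

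The two properties of $a$ I would use are $a(d) \to 0$ as $d \to \infty$ and $\sum_{d=1}^{\infty} a(d) = \infty$ (comparison with the harmonic series, via $a(d) \ge \frac{m}{d+m}$). To build the sequence I would run a greedy procedure: put $\sigma_0 = 0$, $d(0) = 1$, and for $n \ge 1$ let $d(n)$ be the least integer $\ge d(n-1)$ with $\sigma_{n-1} + a(d(n)) \le s$, then set $\sigma_n = \sigma_{n-1} + a(d(n))$; such an integer exists as long as $\sigma_{n-1} < s$, because $a(d) \to 0$. The sequence $(\sigma_n)$ is nondecreasing and bounded by $s$. If its limit $\sigma_\infty$ were $< s$, then $s - \sigma_{n-1} \ge s - \sigma_\infty > 0$ for all $n$; choosing $D_0$ with $a(D_0) \le s - \sigma_\infty$, the integer $\max(d(n-1), D_0)$ is always admissible, so $(d(n))$ would be bounded, hence eventually constant, forcing $\sum_n a(d(n)) = \infty$ — a contradiction. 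So the limit is $s$, and the same dichotomy (a bounded nondecreasing $\N$-sequence is eventually constant, and then its $a$-sum diverges) shows that whenever this procedure runs forever the resulting $(d(n))$ is unbounded, hence nondecreasing with $d(n) \to \infty$.

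The one genuine obstacle is that the greedy procedure may halt after finitely many steps — precisely when $\sigma_N = s$ for some $N$, which does happen when $r$ is expressible as a finite product $\prod \frac{d(j)}{d(j)+m}$ — and a finite list is not an admissible answer. I would get around this by reserving an infinite block in advance. Since $a(d) \to 0$, choose $e_1 < e_2 < \cdots$ in $\N$ with $a(e_k) < s\,2^{-k}$, so $t := \sum_{k} a(e_k) < s$; run the greedy procedure above toward the reduced target $s' = s - t \in (0,s)$ (by the previous paragraph it produces, whether it halts or not, a nondecreasing list whose total $a$-sum equals $s'$); and take $(d(n))_{n \in \N}$ to be the nondecreasing rearrangement of the multiset consisting of $e_1, e_2, \ldots$ together with the greedy output. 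This merged sequence is infinite since it contains all the $e_k$, and each value occurs in it with finite multiplicity (the $e_k$ are distinct, and a value repeated infinitely often in the greedy output would make its $a$-sum diverge), so the nondecreasing rearrangement tends to $\infty$; and because all terms are nonnegative, $\sum_n a(d(n)) = t + s' = s$. Together with the first paragraph this proves the lemma. The main work is thus the greedy convergence argument and the bookkeeping around the reserved block; the divergence $\sum_d a(d) = \infty$ is exactly what makes it possible to land on $s$ on the nose, and every estimate involved is routine.

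As an alternative I could avoid logarithms and Lemma~\ref{Folland_Exr} altogether, building $(d(n))$ from successive runs of consecutive integers and using $\prod_{d=a}^{b}\frac{d}{d+m} \to 0$ as $b \to \infty$ to push the partial products down to $r$ in stages; but the logarithmic route is cleaner and uses the lemma prepared just above.
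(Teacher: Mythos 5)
Your argument is correct, and it shares the greedy spirit of the paper's proof, but the execution differs in two genuine ways, so a comparison is worthwhile. The paper works multiplicatively: it defines $d(n)$ as the least $k$ with $1 - \frac{m}{k+m} > r/r_{n-1}$, i.e.\ the least choice keeping the partial product \emph{strictly} above $r$; because of the strict inequality the recursion can never terminate, so the halting case you patch with the reserved block $e_1 < e_2 < \cdots$ and the nondecreasing merge simply never arises there. (You would get the same simplification by running your greedy step with $\sigma_{n-1}+a(d(n)) < s$ instead of $\leq s$; your patch is valid, just avoidable.) The arguments that the target is hit exactly also differ: you show that $\sigma_\infty < s$ would force $(d(n))$ to be bounded, hence eventually constant, making $\sum_n a(d(n))$ diverge; the paper assumes $r_n \to r' > r$, deduces $d(n) \to \infty$ via Lemma~\ref{Folland_Exr}, and contradicts the minimality of $d(k)$ by showing $d(k)-1$ would also have been admissible for large $k$. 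Both are sound. Two small points on your write-up: once you know $\sum_n a(d(n)) = s < \infty$, the partial products are exactly $e^{-\sum_{n \leq N} a(d(n))}$ and converge to $e^{-s} = r$ directly, so your detour through $\log(1+x) \geq x/(1+x)$ and Lemma~\ref{Folland_Exr} is unnecessary (the paper does use that lemma, but only to get $d(n) \to \infty$ inside its contradiction step); and your admissibility of $\max(d(n-1),D_0)$ tacitly uses that $a(d) \leq s - \sigma_\infty$ for all $d \geq D_0$, which holds because $a(d) = \log(1+m/d)$ is decreasing, but should be said explicitly.
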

\begin{proof}
We construct the sequence by induction on $n$. 
To begin,
 set $r_0=1$. We note that $r_0>r$. Then define
\[
 d(1)= \min \left( \left\{k \in \N \colon 1 - \frac{m}{k+ m} > r \right\} \right)
\quad
\mbox{and}
\quad 
r_1= 1- \frac{m}{d(1)+ m}.
\]
Obviously, the  set in the definition of $d(1)$
  is nonempty and  $r_0 \geq r_1>r$.
 Then, for $n\geq 2$, set 
\begin{equation*}
d(n)= \min \left( 
\left\{
k \in \N \colon  1 -  \frac{m}{k+ m} > \frac{r}{r_{n-1}} 
\right\} 
\right) 
\hspace*{.1em}
\mbox{ and }
\hspace*{.1em} 
r_{n}= 
\prod_{k=1}^{n} \left(1- \frac{m}{d(k)+ m}\right).
\end{equation*}
Now we claim that
\begin{enumerate}
\item\label{Claim_1.a}
$r_{n} \geq r_{n+1}> r$ for all $n \in \N$.
\item\label{Claim_1.b}
$d(n) \leq d(n+1)$ for all $n \in \N$.
\item\label{Claim_1.c}
$\lim_{n\to \infty} r_n = r$.
\end{enumerate}
Part (\ref{Claim_1.a}) is immediate. We prove (\ref{Claim_1.b}). Since 
$\frac{r}{r_{n}} > \frac{r}{r_{n-1}}$, it follows that
\[
\left\{
k \in \N \colon  1 -  \frac{m}{k+ m} > \frac{r}{r_{n}} 
\right\}
\subseteq
\left\{
k \in \N \colon  1 -  \frac{m}{k+ m} > \frac{r}{r_{n-1}} 
\right\} 
\]
Using this, we get
\begin{align*}
d(n) 
&= 
\min \left(
\left\{
k \in \N \colon  1 -  \frac{m}{k+ m} > \frac{r}{r_{n-1}} 
\right\}
\right)
\\&
\leq
\min \left(
\left\{
k \in \N \colon  1 -  \frac{m}{k+ m} > \frac{r}{r_{n}} 
\right\}
\right)
 =d(n+1).
\end{align*}

We prove (\ref{Claim_1.c}). By (\ref{Claim_1.a}), it is immediate that $(r_n)_{n \in \N}$ converges to an element $r'$.
It suffices to show that $r'=r$.
 Assume $r'>r$. 
 Since $(r_n)_{n \in \N}$ converges to $r'>0$, 
 it follows from Lemma~\ref{Folland_Exr} that $\lim_{n \to \infty} d(n)= \infty$.
Then we choose $k$ large enough such that
\[
\left(1- \frac{m}{d(k) - 1+ m}\right)r'> r.
\]
Using this and using $r_{k-1}\geq r'$, we get 
\[
 \left(1- \frac{m}{d(k) - 1+ m}\right)> \frac{r}{r_{k-1}}.
\]
This contradicts the definition of $d(k)$.
\end{proof}
The following construction is motivated by Construction~6.1 of \cite{AGP19} and Construction~1.1 of \cite{HP19}.  
In \cite{AGP19}, the essential point was that the group is $\Z/2 \Z$ and the action has the Rokhlin property.
 Here, our construction is in a sense dual to the
the example as in Section~6 of \cite{AGP19}, but in the dual version it is easier to see how
to generalize to groups other than $\Z/2 \Z$  (especially to groups
which are not cyclic) and enables the use of nonabelian groups.
This modification will lead to further difficulties. 
\begin{ctn}\label{Ctn}
We define the following objects:
\begin{enumerate}
\item\label{Ctn_1}
Let $G$ be a (nontrivial) finite group  and 
let $z \colon G \to \U (l^2 (G))$ be the left regular  representation. Set $\nu=\card (G)$.
Since $G$ is finite, we have $B (l^2 (G)) \cong M_{\card (G)}$.
\item\label{Ctn_2}
Let $\eta\in \left(0, \frac{1}{\nu}\right)$. So $\nu \eta\in (0, 1)$.
Applying  Lemma~\ref{Special_seq} with $\nu \eta$ in place of $r$ and $\nu$ in palce of $m$,
we get a nondecreasing sequence
 $(d(n))_{n \in \N}$ in $\N$ such that 
\begin{equation*}
 \lim_{n \to \infty} d(n)= \infty
\qquad
\mbox{ and }
\qquad
 \prod_{n = 1}^{\infty} \left( 1 -  \frac{\nu}{d(n)+\nu} \right) = \nu \eta.
\end{equation*}
\item\label{Ctn_3}
For $n \in \N$, define
\begin{itemize}
\item
$l(n) =  d(n) + \nu$.
\item
$s(0)=1$ and $s (n) = \prod_{k = 1}^n d(k)$.
\item
$r(0)=1$ and $r (n) = \prod_{k = 1}^n l(k)$.
\item
$u (0)=1$ and 
$u (n)
 = \frac{s (n)}{r (n)}
 = \prod_{k = 1}^n \left(1 - \frac{\nu}{ d(k) + \nu} \right)$.
\end{itemize}
\item\label{Ctn_4}
It our computations, we will need
the following identifications of tensor products of matrix algebras:
\begin{enumerate}
\item\label{Ctn_4.ITN1}
For $n\in \N$, define an  isomorphism $\theta \colon M_{\nu} \otimes M_{\nu} \to M_{\nu^2}$ by 
\[
a \otimes [b_{jk}]_{j,k =1}^{\nu}  \mapsto \left[a b_{jk} \right]_{j,k =1}^{\nu}.
\]
\item\label{Ctn_4.ITN2}
For $n \in \N$, define an  isomorphism $\varphi_{n} \colon M_{\nu^2} \otimes M_{n} \to M_{\nu^2 n}$ by 
\[
a \otimes [b_{jk}]_{j, k =1}^{n}  \mapsto \left[a b_{jk} \right]_{j,k =1}^{n}.
\]
\item\label{Ctn_4.ITN3}
For $n \in \N$, define an  isomorphism $\psi_{n} \colon M_{\nu} \otimes M_{\nu n} \to M_{\nu^2 n}$ by 
\[
a  \otimes  [b_{jk}]_{j,k =1}^{\nu n}
\mapsto
 \left[a b_{jk} \right]_{j, k =1}^{\nu n}.
\]
\item\label{Ctn_4.ITN4}
For $n \in \N$, define an  isomorphism $\sigma_{n} \colon M_{\nu} \otimes M_{n} \to M_{\nu n}$ by 
\[
a \otimes [b_{jk}]_{j,k =1}^{n} 
\mapsto
 \left[a b_{jk} \right]_{j, k =1}^{n}.
\]
\end{enumerate} 
\item\label{Ctn_5}
By Lemma~\ref{Left_R_absorb}, there is $w \in \U (M_{\nu} \otimes M_{\nu})$ such that, for all $g \in G$,
\begin{equation*}
w (z_{g} \otimes z_{g})w^* =  z_{g} \otimes 1_{\nu}.
\end{equation*}
\item\label{Ctn_6}
For $n \in \Nz$,
define a compact space by
$X_{n} = (S^{2})^{s (n)}$.
Then the covering dimension of $X_n$
is $\dim ( X_{n} ) = 2 s (n)$.
\item\label{Ctn_7}
For $n \in \Nz$
and $j = 1, 2, \ldots, d (n + 1)$,
let $P^{(n)}_j \colon X_{n + 1} \to X_n$
be the $j$~coordinate projection.
\item\label{Ctn_8}
Choose points $x_m \in X_m$
for $m \in \N$ such that
for all $n \in \Nz$,
the set
\begin{align*}
&
\Bigl\{ \bigl( P^{(n)}_{\nu_{1}} \circ P^{(n + 1)}_{\nu_{2}}
      \circ \cdots \circ P^{(m - 1)}_{\nu_{m - n}} \bigr) (x_m) \colon
\\
& \hspace*{1em} {\mbox{}}
       {\mbox{$m = n + 1, \, n + 2, \, \ldots$
       and $\nu_j = 1, 2, \ldots, d (n + j)$
       for $j = 1, 2, \ldots, m - n$}} \Bigr\}
\end{align*}
is dense in $X_n$.

\item\label{Ctn_9}
For $n \in \Nz$, define
\[
A_{n}
 = C \left(X_{n}, M_{\nu r (n)}\right).
\]
\item\label{Ctn_10-}
For $n \in \Nz$, $f \in  C \left(X_{n}, M_{\nu r (n)}\right)$, and $x \in X_n$, define
\[
c_f (x)
= 
\varphi_{r(n)} \left(\theta(w) \otimes 1_{r(n)} \right)
\psi_{r(n)} \left(1_{\nu} \otimes f(x)\right)
\varphi_{r(n)} \left(\theta(w^*) \otimes 1_{r(n)} \right).
\]
\item\label{Ctn_10}
For $n \in \Nz$, define
\[
\Gamma_{n + 1, \, n} \colon A_n  \to A_{n+1}
\] by
\begin{equation*}
[\Gamma_{n + 1, \, n} (f)](x) =
\left( \begin{matrix}
    \left(f \circ P^{(n)}_{1}\right) (x) & & & & 0 \\
    & &  \ddots      & &     \\
    & & & \left(f \circ P^{(n)}_{d (n + 1)} \right)(x) \\
 0  & & & &   
 c_f (x_n) 
 \end{matrix} \right).
\end{equation*}
for $f \in C \left(X_{n}, M_{\nu r(n)}\right)$, $x \in X_{n+1}$, and $x_n \in X_n$.
For $m, n \in \Nz$ with $m \leq n$,
now define
\[
\Gamma_{n, m}
= \Gamma_{n, n - 1} \circ \Gamma_{n - 1, \, n - 2} \circ \cdots
          \circ \Gamma_{m + 1, m}
   \colon A_m \to A_n.
\]
\item\label{Ctn_11}
Define
\[
A = \dirlim \big( A_n, \, (\Gamma_{m, \, n})_{m \geq n} \big).
\]
For $n \in \Nz$, it is clear that $\Gamma_{n + 1, \, n}$
is an injective unital homomorphism.
Let $\Gamma_{\infty, n} \colon A_n \to A$
be the standard map associated with the direct limit.
\item\label{Ctn_12}
For $n \in \Nz$, define
$\alpha^{(n)} \colon G \to \Aut (A_n)$  by
\begin{align*}
\left[\alpha^{(n)}_g (f)\right](x) 
&= 
\Ad\left(\sigma_{r(n)} (z_g\otimes 1_{r(n)})\right) (f(x))
\\&
=
\sigma_{r(n)} \left(z_g\otimes 1_{r(n)}\right) f(x) \ \sigma_{r(n)} \left(z^*_g\otimes 1_{r(n)}\right)
\end{align*}
for all $g \in G$, $f\in C (X_{n}, M_{\nu r(n)})$, and  $x \in X_n$.
We then have the following diagram:
\begin{equation}\label{Eq_9411_LimDiag}
\begin{CD}
A_0
@>{\Gamma_{1, \, 0}}>>
A_1
 @>{\Gamma_{2, \, 1}}>>
A_2
@>{\Gamma_{3, \, 2}}>>
A_3
@>{}>>
 \cdots   \\
@V{\alpha^{(0)}_{g}}VV  @V{\alpha^{(1)}_{g}}VV
@V{\alpha^{(2)}_{g}}VV     @V{\alpha^{(3)}_{g}}VV      \\
A_0
@>{\Gamma_{1, \, 0}}>>
A_1
@>{\Gamma_{2, \, 1}}>>
A_2
@>{\Gamma_{3, \, 2}}>>
A_3
@>{}>>
 \cdots.
\end{CD}
\end{equation}
\end{enumerate}
\end{ctn}
%
\begin{lem}\label{Lem_Identification}
Assume the notation and choices in
Construction~\ref{Ctn}(\ref{Ctn_4.ITN1}),
Construction~\ref{Ctn}(\ref{Ctn_4.ITN2}), Construction~\ref{Ctn}(\ref{Ctn_4.ITN3}),
and Construction~\ref{Ctn}(\ref{Ctn_4.ITN4}).
Let $n \in \N$, let $a, b\in M_{\nu}$, and let $c\in M_{n}$.
Then 
\[
\psi_n \big(a \otimes \sigma_n \left(b \otimes c\right)\big)
= 
\varphi_n \big(\theta \left(a \otimes b\right) \otimes c\big).
\]
\end{lem}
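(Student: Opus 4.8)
The plan is to prove Lemma~\ref{Lem_Identification} by a direct computation on elementary tensors, using the explicit matrix-unit descriptions of the four isomorphisms and then invoking linearity and continuity (bilinearity of $\otimes$) to conclude in general. So first I would reduce to the case that $a, b \in M_\nu$ and $c \in M_n$ are matrix units, say $a = e_{pq}$ in $M_\nu$, $b = e_{rs}$ in $M_\nu$, and $c = [c_{jk}]_{j,k=1}^n$ an arbitrary element of $M_n$ (it is convenient to keep $c$ general and only expand $a$ and $b$, since all four maps are $M_n$-linear in the last slot in an evident way, or one can also expand $c = e_{jk}$). The point is that both sides of the claimed identity are bilinear in $(a,b)$ for fixed $c$, so it suffices to check it on a spanning set of $M_\nu \otimes M_\nu$.

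Next I would compute the left-hand side. By Construction~\ref{Ctn}(\ref{Ctn_4.ITN4}), $\sigma_n(b \otimes c) = \sigma_n(e_{rs} \otimes [c_{jk}]) = [e_{rs}\, c_{jk}]_{j,k=1}^n$, which is the $\nu n \times \nu n$ matrix that, in the natural block decomposition into $n \times n$ blocks of size $\nu \times \nu$... — more cleanly, I would track entries: $\sigma_n(b\otimes c)$ has $(\,(r,j),(s,k)\,)$-entry equal to $c_{jk}$ and all other entries zero, under the index identification $\{1,\dots,\nu n\} \cong \{1,\dots,\nu\}\times\{1,\dots,n\}$ used to define $\sigma_n$. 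Then applying $\psi_n$ from Construction~\ref{Ctn}(\ref{Ctn_4.ITN3}), $\psi_n(a \otimes \sigma_n(b\otimes c))$ is the $\nu^2 n \times \nu^2 n$ matrix whose $(\,(p, (r,j)), (q,(s,k))\,)$-entry is $(e_{pq})_{?}$ — here I must be careful because $a = e_{pq}$ is itself a $\nu\times\nu$ matrix, so $a\,b_{\mathrm{entry}}$ in the definition of $\psi_n$ means the scalar times $e_{pq}$; the resulting entry is $c_{jk}$ placed at row index $(p,(r,j))$ and column index $(q,(s,k))$. In parallel I would compute the right-hand side: $\theta(a\otimes b) = \theta(e_{pq}\otimes e_{rs})$ is, by Construction~\ref{Ctn}(\ref{Ctn_4.ITN1}), the $\nu^2\times\nu^2$ matrix $[e_{pq}(e_{rs})_{jk}]$, i.e.\ the single matrix unit $e_{(p,r),(q,s)}$ in $M_{\nu^2}$ (with the row/column indices ordered so that the first factor indexes blocks); then $\varphi_n(\theta(a\otimes b)\otimes c)$, by Construction~\ref{Ctn}(\ref{Ctn_4.ITN2}), is the $\nu^2 n\times\nu^2 n$ matrix with $(\,((p,r),j),((q,s),k)\,)$-entry equal to $c_{jk}$ and zero elsewhere.

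The two resulting matrices are then seen to coincide once one checks that the two composite index identifications of $\{1,\dots,\nu^2 n\}$ agree: on the left the identification is $\nu \times (\nu\times n)$ (outer $a$, then $\sigma_n$'s $\nu\times n$), while on the right it is $(\nu\times\nu)\times n$ (the $\theta$-product $\nu\times\nu$, then $n$). Associativity of the lexicographic ordering on $\{1,\dots,\nu\}\times\{1,\dots,\nu\}\times\{1,\dots,n\}$ — equivalently, the standard "base-$\nu$-then-base-$n$" numbering — makes these the same bijection, so the nonzero entry $c_{jk}$ sits in the same position in both matrices. Hence the identity holds on matrix units and therefore, by bilinearity in $(a,b)$ and linearity in $c$, on all of $M_\nu$, $M_\nu$, $M_n$.

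The only real obstacle here is bookkeeping: making the index conventions in the definitions of $\theta$, $\varphi_n$, $\psi_n$, $\sigma_n$ fully explicit and verifying that the nesting orders of the Kronecker-type products match up (this is precisely the content of the lemma — it is the associativity-of-tensor-product compatibility among these four normalizations). I expect no conceptual difficulty; I would present the proof by fixing matrix units, writing down the single nonzero entry of each side together with its row and column multi-index, and observing they are literally equal. If one prefers to avoid indices, an alternative is to note that all four maps are the canonical *-isomorphisms implementing $M_a \otimes M_b \cong M_{ab}$ with the lexicographic basis ordering, and that the statement is just the pentagon/associativity coherence for these canonical identifications; but the index computation is short enough that I would simply do it.
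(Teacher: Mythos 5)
Your proposal is correct and takes essentially the same route as the paper, whose entire proof is the one-line ``It follows from explicit computations'': reducing to matrix units and checking that both sides place the entry $a_{pq}\,b_{r s}\,c_{jk}$ at the same lexicographic position is exactly the intended argument. One bookkeeping remark: in the construction's formulas $a \otimes [b_{jk}] \mapsto [a\,b_{jk}]$ the \emph{second} factor's indices label the blocks (so the correct nesting is $c$-index outermost and $a$-index innermost), not the first as you state; since you apply your transposed convention uniformly to all four maps $\theta$, $\varphi_n$, $\psi_n$, $\sigma_n$, the associativity-of-lexicographic-ordering argument and the conclusion are unaffected.
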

\begin{proof}
It follows from explicit computations.
\end{proof}

\begin{lem}\label{L_Kap_20200325}
Assume the notation and choices in
Construction~\ref{Ctn}(\ref{Ctn_2}) and Construction~\ref{Ctn}(\ref{Ctn_3}).
Then $(u (n))_{n \in \Nz}$ is  decreasing
and $\lim_{n \to \infty} u(n)=\nu \eta$.
\end{lem}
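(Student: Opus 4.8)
The plan is simply to unwind the definitions made in Construction~\ref{Ctn}. By Construction~\ref{Ctn}(\ref{Ctn_3}) we have, for every $n \in \N$,
\[
u(n) = \frac{s(n)}{r(n)} = \prod_{k=1}^{n} \left( 1 - \frac{\nu}{d(k)+\nu} \right),
\qquad u(0) = 1 .
\]
Since $d(k) \in \N$ for every $k$, each factor $1 - \frac{\nu}{d(k)+\nu} = \frac{d(k)}{d(k)+\nu}$ lies in $(0,1)$. First I would record the telescoping relation $u(n+1) = u(n)\cdot\left(1 - \frac{\nu}{d(n+1)+\nu}\right)$, valid for $n \in \Nz$ (with the convention $u(0)=1$), which immediately gives $0 < u(n+1) < u(n) \le u(0) = 1$; hence $(u(n))_{n \in \Nz}$ is (strictly) decreasing, in particular decreasing.

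For the limit, the point is that $\lim_{n\to\infty} u(n)$ is by definition the value of the infinite product $\prod_{k=1}^{\infty}\left(1 - \frac{\nu}{d(k)+\nu}\right)$, and this product was arranged in Construction~\ref{Ctn}(\ref{Ctn_2}) — by applying Lemma~\ref{Special_seq} with $\nu\eta$ in place of $r$ and $\nu$ in place of $m$ — to equal $\nu\eta$. Thus $\lim_{n\to\infty} u(n) = \nu\eta$, as claimed. (One may note for reassurance that $\nu\eta > 0$ is consistent with Lemma~\ref{Folland_Exr}, since $\sum_{k=1}^\infty \frac{\nu}{d(k)+\nu} < \infty$ must hold; but this is not needed, as convergence of the product is already part of the construction.) There is essentially no obstacle here: the only thing to be careful about is bookkeeping the index convention $u(0)=1$ versus the product starting at $k=1$, and citing Construction~\ref{Ctn}(\ref{Ctn_2}) for the exact value of the limit rather than reproving Lemma~\ref{Special_seq}.
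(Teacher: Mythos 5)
Your proof is correct and follows exactly the route of the paper, which simply observes that the claim is immediate from Construction~\ref{Ctn}(\ref{Ctn_2}) and Construction~\ref{Ctn}(\ref{Ctn_3}); you merely spell out the details (each factor $\frac{d(k)}{d(k)+\nu}\in(0,1)$ gives monotonicity, and the limit of the partial products is the infinite product arranged via Lemma~\ref{Special_seq} to equal $\nu\eta$). No gaps.
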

\begin{proof}
The proof is immediate from Construction~\ref{Ctn}(\ref{Ctn_2}) and Construction~\ref{Ctn}(\ref{Ctn_3}).
\end{proof}
\begin{lem}\label{Approx_Commutnat}
Let $\ep>0$ and  let $n \in \N$. Let $b$ be a normal element in $M_n$ with $\| b \| = 1$.
Suppose that $\| b c - c b\| < \ep$ for all $c \in M_n$ with $\| c \|=1$.
Then there is $\xi \in \mathbb{T}$ such that  $\| b - \xi 1\| < \ep$.
\end{lem}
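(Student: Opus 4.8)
The plan is to diagonalize the normal element $b$ and then probe the near-commutation hypothesis with the rank-one matrix units built from an orthonormal eigenbasis of $b$.

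First I would choose an orthonormal basis $v_1, \dots, v_n$ of $\C^n$ with $b v_i = \lambda_i v_i$, where $\lambda_1, \dots, \lambda_n$ are the eigenvalues of $b$, repeated according to multiplicity; this is possible because $b$ is normal. Normality also gives $\| b \| = \max_i |\lambda_i|$, so the hypothesis $\| b \| = 1$ lets me fix an index $j_0$ with $|\lambda_{j_0}| = 1$, and I take $\xi = \lambda_{j_0} \in \mathbb{T}$. For each $i$ put $c_i = v_i v_{j_0}^* \in M_n$, a rank-one operator with $\| c_i \| = 1$. Using normality once more, in the form $v_{j_0}^* b = \lambda_{j_0} v_{j_0}^*$ (valid since $b^* v_{j_0} = \overline{\lambda_{j_0}}\, v_{j_0}$), a one-line computation gives $b c_i - c_i b = (\lambda_i - \lambda_{j_0}) c_i$, hence $\| b c_i - c_i b \| = |\lambda_i - \lambda_{j_0}|$. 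Applying the hypothesis to each $c_i$ yields $|\lambda_i - \lambda_{j_0}| < \ep$ for every $i$.

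Finally, $b - \xi 1$ is normal with eigenvalues $\lambda_i - \xi = \lambda_i - \lambda_{j_0}$, so $\| b - \xi 1 \| = \max_i |\lambda_i - \lambda_{j_0}| < \ep$, which is what is wanted; the strict inequality survives because the maximum is taken over a finite set. I do not expect a genuine obstacle here. The only points that need care are that normality is precisely what makes both $\| b \| = \max_i |\lambda_i|$ and $v_{j_0}^* b = \lambda_{j_0} v_{j_0}^*$ available, and that finite-dimensionality of $M_n$ is what preserves the strict inequality after the maximum over eigenvalues. If one prefers to avoid eigenbases, the same argument runs with two spectral projections of $b$ and a partial isometry between them, but in $M_n$ the matrix-unit version is the cleanest.
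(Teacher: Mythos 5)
Your proof is correct and follows essentially the same route as the paper's: both diagonalize the normal element, pick an eigenvalue of modulus one, test the commutator hypothesis against the rank-one matrix units $v_i v_{j_0}^*$ (the paper's $E_{mk}$) to get $|\lambda_i - \lambda_{j_0}| < \ep$ for all $i$, and conclude via $\| b - \xi 1 \| = \max_i |\lambda_i - \xi|$. No substantive difference.
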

\begin{proof}
Since $a$ is normal, it follows from the Spectral Theorem that 
\begin{equation} \label{Eq1.20200324}
a=\sum_{j=1}^{n} \xi_j p_j, 
\end{equation}
where $p_1, \ldots,p_n$ are mutually orthogonal rank one projections in $M_n$, 
$\sum_{j=1}^{n} p_j=1$, and 
$\xi_j$ is the eigenvalue associated with $p_j$ for $j=1, \dots, n$.
Since $\|a\|=1$, there exists $m \in \{1, \ldots, n\}$ such that $|\xi_{m}|=1$.
 Assume that  $\{E_{kj}\}$ matrix units associated to the orthonormal basis given by the $\{p_j\colon 1 \leq j \leq n\}$ 
 (i.e., $E_{jj}=p_{j}$ and $E_{kj}=p_{k} E_{kj} p_{j}$). 
Using (\ref{Eq1.20200324}) and orthogonality of projections at the second step
 and using the hypothesis in the statement of the lemma in the last step, we get
\begin{equation}\label{Eq2.20200324}
|\xi_{m} - \xi_{k}|= \| (\xi_{m} - \xi_{k}) E_{m k} \|
=
\| a E_{m k} - E_{m k} a \|<\ep.
\end{equation} 
Therefore, using (\ref{Eq1.20200324}) and $\sum_{j=1}^{n} p_j=1$ at the first step 
and using (\ref{Eq2.20200324}) at the last step,
\begin{equation*}
\| a - \xi_{m} 1\|
= \left\|\sum_{j=1}^{n} \xi_j p_j - \xi_{m} \sum_{j=1}^{n} p_j \right\|
= 
\left\|\sum_{j=1}^{n} (\xi_j - \xi_{m})p_j \right\|
=
\max_{j} | \xi_j - \xi_{m} |<\ep.
\end{equation*}
This completes the proof.
\end{proof}
\begin{prp}\label{L_af_proper}
Assume the notation and choices in Construction~\ref{Ctn}.
\begin{enumerate}
\item\label{L_af_proper.a}
 The C*-algebra $A$ is stably finite, separable, simple, and has stable rank one.
\item\label{L_af_proper.b}
The diagram~(\ref{Eq_9411_LimDiag}) commutes. Moreover,
there is a unique action $\alpha\colon G \to \Aut (A)$ such that $\af = \dirlim \af^{(n)}$. 
\item\label{L_af_proper.c}
$\alpha$ is strictly approximately  inner. 
\item\label{L_af_proper.c'}
$\alpha$ is  pointwise outer. 
\item\label{L_af_proper.d}
$C^*(G, A, \alpha)$ is simple.
\setcounter{TmpEnumi}{\value{enumi}}
\end{enumerate}
\end{prp}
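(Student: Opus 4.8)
The plan is to treat the five assertions in the order \eqref{L_af_proper.b}, \eqref{L_af_proper.c}, \eqref{L_af_proper.a}, and then \eqref{L_af_proper.c'} and \eqref{L_af_proper.d} together, the last two being where essentially all of the work lies.

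For \eqref{L_af_proper.b}, I would check directly that $\af^{(n+1)}_g \circ \Gamma_{n+1, n} = \Gamma_{n+1, n} \circ \af^{(n)}_g$ for all $g \in G$ and $n \in \Nz$. Using the decomposition $r(n+1) = d(n+1)\, r(n) + \nu\, r(n)$, the unitary $\sigma_{r(n+1)}(z_g \otimes 1_{r(n+1)})$ implementing $\af^{(n+1)}_g$ splits as a block-diagonal unitary whose first $d(n+1)$ blocks are each equal to $\sigma_{r(n)}(z_g \otimes 1_{r(n)})$ — matching the coordinate-projection blocks $f \circ P^{(n)}_j$ of $\Gamma_{n+1,n}$ — and whose last block, after unwinding the identifications $\theta$, $\varphi_n$, $\psi_n$ by means of \Lem{Lem_Identification} together with the Fell-absorption relation $w(z_g \otimes z_g) w^* = z_g \otimes 1_{\nu}$ of Construction~\ref{Ctn}\eqref{Ctn_5}, is precisely the unitary conjugating the point-evaluation block $c_f(x_n)$ to $c_{\af^{(n)}_g(f)}(x_n)$. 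This is a finite, if somewhat intricate, matrix computation. Once \eqref{Eq_9411_LimDiag} commutes, a unique $\af = \dirlim \af^{(n)}$ exists by the usual functoriality, since $\bigcup_n \Gamma_{\infty, n}(A_n)$ is dense in $A$. For \eqref{L_af_proper.c}, observe that $g \mapsto \sigma_{r(n)}(z_g \otimes 1_{r(n)})$ is a genuine unitary representation of $G$ in $A_n$ (because $z$ is a representation and $\sigma_{r(n)}$ is multiplicative), so each $\af^{(n)}$ is inner; \Lem{Lem_inlim_in_ap} then yields that $\af$ is strictly approximately inner.

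For \eqref{L_af_proper.a}: separability is immediate since $A$ is a countable direct limit of the separable algebras $A_n = C(X_n, M_{\nu r(n)})$; stable finiteness is clear because each $A_n$ is stably finite and stable finiteness passes to direct limits. Simplicity is the by-now-standard Villadsen-type argument: given $0 \neq a \in A$, approximate it by $\Gamma_{\infty, n}(a_n)$, and then iterate the connecting maps — the coordinate-projection blocks together with the point-evaluations at the $x_m$ and the density hypothesis of Construction~\ref{Ctn}\eqref{Ctn_8} force $\Gamma_{\infty, n+k}(a_{n+k})$ to generate all of $A_{n+k}$ for $k$ large, so the closed ideal generated by $a$ is all of $A$ (compare the simplicity arguments in \cite{AGP19, HP19}). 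Finally, the maps $\Gamma_{n+1, n}$ are diagonal (finite direct sums of $*$-homomorphisms of the form $f \mapsto u (f \circ \ld) u^*$) with multiplicities $l(n+1) = d(n+1) + \nu \to \infty$, and a unital simple AH~algebra built from such connecting maps has stable rank one, as in the Villadsen-type constructions (cf.\ \cite{HP19}).

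For \eqref{L_af_proper.d} and \eqref{L_af_proper.c'}: since each $\af^{(n)}$ is inner, implemented by the representation $\rho^{(n)} \colon g \mapsto \sigma_{r(n)}(z_g \otimes 1_{r(n)})$, there is a canonical isomorphism $C^*(G, A_n, \af^{(n)}) \cong A_n \otimes C^*(G)$ carrying $u_g$ to $\rho^{(n)}(g) \otimes \lambda_g$; under it the induced connecting maps become $\Gamma_{n+1, n}$ on $A_n$ and $\lambda_g \mapsto D_g^* \lambda_g$, where $D_g = \rho^{(n+1)}(g)^* \Gamma_{n+1, n}(\rho^{(n)}(g)) \in \U(A_{n+1})$ commutes with $\Gamma_{n+1, n}(A_n)$ and, by the block computation in \eqref{L_af_proper.b}, is trivial on the coordinate-projection blocks and a specific nontrivial unitary on the point-evaluation block. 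Thus $\CGAa$ is the direct limit of the highly non-simple algebras $C(X_n, M_{\nu r(n)}) \otimes C^*(G)$ along these twisted maps, and the task for \eqref{L_af_proper.d} is to show the limit is simple: one must verify that the twisting by the $D_g$ mixes the summands of $C^*(G)$ enough that, combined with the coordinate spreading and the density of Construction~\ref{Ctn}\eqref{Ctn_8}, every nonzero ideal is all of $\CGAa$; the same argument, applied to the subgroup $\langle g \rangle$ generated by any $g \neq e$, then gives that $C^*(\langle g \rangle, A, \af_g)$ is simple. Part \eqref{L_af_proper.c'} follows: if $\af_g = \Ad(v)$ for some $g \neq e$ of order $\kp$, then after multiplying $v$ by a scalar we may assume $v^{\kp} = 1$, and since $\af_g(v) = v$ the unitary $v^* u_g$ is central in $C^*(\langle g \rangle, A, \af_g)$ and has finite order, forcing $C^*(\langle g \rangle, A, \af_g) \cong A^{\oplus j}$ with $j \geq 2$, which contradicts simplicity. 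The main obstacle is exactly the simplicity of this twisted direct limit in \eqref{L_af_proper.d}: $\af$ is inner at every finite stage and acts trivially on $K$-theory and on the trace simplex of $A$, so its outerness cannot be seen by any of the standard invariants and must instead be read off from the precise way in which the implementing unitaries $\rho^{(n)}$ fail to intertwine the $\Gamma_{n+1, n}$ — this is where the careful choice of the Fell-absorption unitary $w$ and of the points $x_n$ in Construction~\ref{Ctn} is used.
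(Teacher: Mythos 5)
Parts (\ref{L_af_proper.a}), (\ref{L_af_proper.b}) and (\ref{L_af_proper.c}) of your proposal are essentially the paper's argument: the block computation with \Lem{Lem_Identification} and the Fell absorption relation $w(z_g\otimes z_g)w^*=z_g\otimes 1_\nu$ for commutativity of~(\ref{Eq_9411_LimDiag}), innerness of each $\af^{(n)}$ plus \Lem{Lem_inlim_in_ap} for strict approximate innerness, and the standard simplicity/stable-rank-one facts for AH systems with diagonal maps (the paper cites \cite{DNN92} and \cite{ElHoTm} where you cite Villadsen-type arguments). Those steps are fine.

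The genuine gap is in (\ref{L_af_proper.c'})/(\ref{L_af_proper.d}), which is where all the real content of the proposition sits. You invert the paper's logic: the paper proves pointwise outerness \emph{directly} and then gets simplicity of $\CGAa$ for free from Kishimoto's theorem (Theorem~3.1 of \cite{Ks1}), whereas you propose to prove simplicity of the crossed product first, as a twisted direct limit of the non-simple algebras $A_n\otimes C^*(G)$, and then deduce outerness from simplicity of $C^*(\langle g\rangle, A, \af)$ (your central-unitary argument for that last deduction is correct). But the simplicity of the twisted limit is exactly the hard point, and you do not prove it: you only state ``the task is to verify that the twisting by the $D_g$ mixes the summands of $C^*(G)$ enough,'' and you would moreover have to redo this for every cyclic subgroup $\langle g\rangle$, since simplicity of $\CGAa$ does not pass to crossed products by subgroups. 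In other words, the step that cannot be read off from any standard invariant is left as a description rather than an argument. The paper's mechanism, which is what is missing from your proposal, is the following explicit estimate: if $\af_g=\Ad(v)$, approximate $v$ by $u\in\U(A_n)$; \Lem{Approx_Commutnat} (applied pointwise to $\sigma_{r(n)}(z_g^*\otimes 1_{r(n)})u(x)$, which nearly commutes with all of $M_{\nu r(n)}$) forces $u(x)\approx\lambda(x)\,\sigma_{r(n)}(z_g\otimes 1_{r(n)})$ with $\lambda(x)\in\mathbb{T}$; then on the point-evaluation block of $A_{n+1}$ one tests against $b_0=\varphi_{r(n)}\bigl(\theta\bigl(w(e_{1,1}\otimes 1_\nu)w^*\bigr)\otimes 1_{r(n)}\bigr)$: conjugation by $\Gamma_{n+1,n}$ of the representation fixes $b_0$ (since $c_h(x_n)=\varphi_{r(n)}(\theta(w(1_\nu\otimes z_g)w^*)\otimes 1_{r(n)})$ commutes with it), while $\af^{(n+1)}_g$ sends it, via Fell absorption and $z_ge_{1,1}z_g^*=e_{g,g}$, to $\varphi_{r(n)}\bigl(\theta\bigl(w(e_{g,g}\otimes 1_\nu)w^*\bigr)\otimes 1_{r(n)}\bigr)$, at distance exactly $1$; this contradicts $\|\Ad(u)-\af_g\|<\ep/4$. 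Without this (or a genuine substitute carrying out your ``mixing'' claim), the proposal does not establish (\ref{L_af_proper.c'}) or (\ref{L_af_proper.d}).
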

\begin{proof}
We prove (\ref{L_af_proper.a}).
Stable finiteness and separability of $A$ are immediate.
For simplicity, it is easy to check that
the hypotheses of Proposition 2.1(ii) of \cite{DNN92} hold.
Since the direct system in
Construction~\ref{Ctn}(\ref{Ctn_10})
has diagonal maps in the sense of Definition~2.1 of~\cite{ElHoTm}, it follows from Theorem~4.1 of~\cite{ElHoTm}
that $A$ has stable rank one.

We only prove the first part of (\ref{L_af_proper.b}). 
It is easy to check that 
the diagram~(\ref{Eq_9411_LimDiag}) commutes if and only if 
\begin{align*}
&
\varphi_{r(n)} \left(\theta \left(\left(z_g  \otimes 1_{\nu}\right )w\right) \otimes 1_{r(n)} \right)  
\psi_{r(n)}\left(1_{\nu} \otimes f (x_n)\right)
 \varphi_{r(n)} \left(\theta \left(w^*\left(z^*_g  \otimes 1_{\nu}\right )\right) \otimes 1_{r(n)} \right)  
 \\\notag 
 &  \hspace*{0em} {\mbox{}}
  =
\varphi_{r(n)} \left(\theta (w) \otimes 1_{r(n)} \right) 
\\\notag 
 &  \hspace*{3em} {\mbox{}} \cdot 
 \psi_{r(n)} \left(1_{\nu} \otimes \sigma_{r(n)} \left(z_g \otimes 1_{r(n)}\right) f(x_n) \sigma_{r(n)} \left(z^*_g \otimes 1_{r(n)}\right) \right)
\varphi_{r(n)} \left(\theta (w^*) \otimes 1_{r(n)} \right) 
\end{align*}
for all $n\in \Nz$, $x_n \in X_n$, $f\in C (X_{n}, M_{\nu r(n)})$, and  $g \in G$. 
By Lemma~\ref{Lem_Identification}, this formula is equivalent to the following:
\begin{align*}
&
\varphi_{r(n)} \left(\theta \left(\left(z_g  \otimes 1_{\nu}\right )w\right) \otimes 1_{r(n)} \right)  
\psi_{r(n)}\left(1_{\nu} \otimes f (x_n)\right)
 \varphi_{r(n)} \left(\theta \left(w^* \left(z^*_g  \otimes 1_{\nu}\right )\right) \otimes 1_{r(n)} \right) 
 \\\notag 
 &  \hspace*{.1em} {\mbox{}}
  =
\varphi_{r(n)} 
\left(
\theta \left(w \left(1_\nu \otimes z_g\right)\right) \otimes 1_{r(n)} 
\right)
 \psi_{r(n)} \left(  1_{\nu} \otimes f(x_n)  \right)
\varphi_{r(n)} 
\left(
\theta \left( \left(1_\nu \otimes z^*_g\right)w^*\right) \otimes 1_{r(n)}
\right). 
\end{align*}
This formula holds if and only if 
\[
\varphi_{r(n)} \left( \theta \left( (1_\nu \otimes z^*_g)w^*(z_g \otimes 1_\nu)w \right) \otimes 1_{r(n)}\right)
\in \left\{ 
\psi_{r(n)} \big(1_{\nu} \otimes M_{\nu r(n)}\big) \right\}^{'}
\]
for all $n \in \Nz$ and $g \in G$. 
This is true by Construction~\ref{Ctn}(\ref{Ctn_5}) and the fact that 
\[
\psi_{r(n)} \left((1_{\nu} \otimes M_{\nu r(n)})^{'}\right)=\psi_{r(n)}\left( M_{\nu} \otimes 1_{\nu r(n)}\right).
\]
Part (\ref{L_af_proper.c}) is immediate from Lemma~\ref{Lem_inlim_in_ap}. 

To prove (\ref{L_af_proper.c'}), let $g \in G \setminus \{1\}$ and 
suppose $\alpha_g$ is inner.
 So, there is a unitary $v \in A$ such that such that $\alpha_g (a) = v a v^*$ for all $a \in A$.
 Choose  $\ep \in (0, 1)$, $n \in \Nz$, and $u \in \U(A_n)$ such that 
 $\|u - v \|< \frac{\ep}{8}$.
  Using this, we get
\begin{equation}\label{Eq520200306}
 \|\Ad (u) - \alpha_g \| =\|\Ad (u) - \Ad (v) \| < \frac{\ep}{4}.
\end{equation}
Using (\ref{Ctn_12}) at the second step, 
we get,
for all $f \in A_n$ with $\| f \| = 1$ and all $x \in X_n$,
\begin{align*}
&\Big\|
\left[\Ad (u)(f)\right](x) - \left[\alpha_g (f)\right](x) 
\Big\|
\\
 & \hspace*{5em} {\mbox{}}=
 \Big\| u(x) f(x) u(x)^* 
 - 
 \sigma_{r(n)}\left(z_g \otimes 1_{r(n)}\right) f(x) \sigma_{r(n)}\left(z^*_g \otimes 1_{r(n)}\right)\Big\|
 < \frac{\ep}{4}.
\end{align*}
This relation implies
\[
\left\| \left[\sigma_{r(n)} \left(z^*_g \otimes 1_{r(n)}\right) u(x)\right] f(x) 
 - 
 f(x) \left[\sigma_{r(n)}\left(z^*_g \otimes 1_{r(n)}\right) u(x)\right]\right\|
 < \frac{\ep}{4}.
\]
for all $f \in C(X_n, M_{\nu r(n)})$ with $\| f \| = 1$ and all $x \in X_n$.
Applying Lemma~\ref{Approx_Commutnat} with $M_{\nu r(n)}$ in place of $M_n$,
$\sigma_{r(n)} \left(z^*_g \otimes 1_{r(n)}\right) u(x)$ in place of $b$,  and $\frac{\ep}{4}$ in place of $\ep$,
 we get $\lambda (x) \in \mathbb{T}$ such that 
\[
\left\| \sigma_{r(n)}\left(z^*_g \otimes 1_{r(n)}\right) u(x)  - \lambda (x) 1_{\nu r(n)}  \right\|< \frac{\ep}{4}.
\]
 This relation implies  
\begin{equation}\label{Eq7.20200304}
\left\|  u(x)  - \lambda(x) \sigma_{r(n)} \left(z_g \otimes 1_{r(n)}\right)\right\|< \frac{\ep}{4}.
\end{equation}
We denote $(e_{g, h})_{g, h \in G}$ by the standard system of matrix units in $M_{\nu} (\mathbb{C})$.
Set 
\begin{equation}
\label{Eq1.20200622}
b_0= \varphi_{r(n)} \left(\theta \left(w(e_{1, 1} \otimes 1_{\nu})w^*\right) \otimes 1_{r(n)}\right).
\end{equation} 
Define the continuous map $b \colon X_{n+1} \to M_{\nu r(n+1)}$ by
\begin{equation}\label{Eq12.20200306}
b(x)=
 \left( \begin{matrix}
    0 & & & &0 \\
    & &  \ddots      &      \\
    & & &  0 \\
 0  & & & &    b_0
 \end{matrix} \right)
 \in M_{\nu r(n+1)}.
\end{equation}
We further define the continuous map $h\colon X_n \to M_{\nu r(n)}$ by  
\begin{equation}\label{Eq12.20200306'}
h(x) = \sigma_{r(n)} \left(z_g \otimes 1_{r(n)}\right). 
\end{equation}

Now we claim, for all $x \in X_{n+1}$,
\begin{enumerate}
\setcounter{enumi}{\value{TmpEnumi}}
\item\label{CL.1}
$\Big\|
\left[\Ad \left(\Gamma_{n+1, n}(h)\right)(b)\right](x)
- 
\left[\alpha^{(n+1)}_{g} (b)\right](x) 
\Big\| =1.
$
\item\label{CL.2}
$\Big\| 
\left[\Ad \left(\Gamma_{n+1, n} (h)\right) (b)\right](x)
 - 
 \left[\Ad \left(\Gamma_{n+1, n}(u)\right) (b)\right] (x)
  \Big\| < \frac{\ep}{2}$.
\end{enumerate} 
We prove (\ref{CL.1}). 
By Construction~\ref{Ctn}(\ref{Ctn_10}) and (\ref{Eq12.20200306'}), we have
\begin{align}\label{EQ11.20200306}
\big[\Gamma_{n+1, n} (h) \big] (x)
  =
 \left( \begin{matrix}
    \sigma_{r(n)} \left(z_g \otimes 1_{r(n)}\right) & & & &0 \\
    & &  \ddots      &      \\
    & & & \sigma_{r(n)} \left(z_g \otimes 1_{r(n)}\right) \\
 0  & & & &   
c_{h} (x_n)
 \end{matrix} \right).
\end{align}
Using Construction~\ref{Ctn}(\ref{Ctn_10-}) and  (\ref{Eq12.20200306'}) at the first step
and using Lemma~\ref{Lem_Identification} at the second step,  we compute
\begin{align}
\label{Eq3.20200622}
&c_{h} (x_n)
\\\notag
&\hspace*{.4em} {\mbox{}}=
\varphi_{r(n)} \left(\theta(w) \otimes 1_{r(n)} \right)
\psi_{r(n)} \left(1_{\nu} \otimes \sigma_{r(n)}(z_g \otimes 1_{r(n)})\right)
\varphi_{r(n)} \left(\theta(w^*) \otimes 1_{r(n)} \right)
\\\notag
&\hspace*{.4em} {\mbox{}}=
\varphi_{r(n)} \left(\theta(w) \otimes 1_{r(n)} \right)
\varphi_{r(n)} \big(\theta \left(1_\nu \otimes z_g\right) \otimes 1_{r(n)}\big)
\varphi_{r(n)} \left(\theta(w^*) \otimes 1_{r(n)} \right)
\\\notag
&\hspace*{.4em} {\mbox{}}=
\varphi_{r(n)} \left(\theta \left(w \left(1_\nu \otimes z_g\right)w^*\right) \otimes 1_{r(n)} \right). 
\end{align}
Now, using (\ref{Eq1.20200622}) at the first step, using Construction~\ref{Ctn}(\ref{Ctn_5}) at the second step,
and
using the fact that  $z_g e_{h, h} z^*_g=e_{gh, gh}$ for all $g, h\in G$ at the last step, we get
\begin{align}\label{Eq5.20200702}
&
\varphi_{r(n)}\left(\theta \left(z_g \otimes 1_\nu\right) \otimes 1_{r(n)}\right)
 b_0 
\varphi_{r(n)}\left(\theta \left(z^*_g \otimes 1_\nu\right) \otimes 1_{r(n)}\right)
\\\notag
&\hspace*{8em} {\mbox{}}=
\varphi_{r(n)}\left(\theta \left((z_g \otimes 1_\nu) w (e_{1, 1} \otimes 1_{\nu})w^* (z^*_g \otimes 1_\nu)\right) \otimes 1_{r(n)}\right)
\\\notag
&\hspace*{8em} {\mbox{}}=
\varphi_{r(n)}\left(\theta \left(w (z_g \otimes z_g)(e_{1, 1} \otimes 1_{\nu})(z^*_g \otimes z^*_g)w^*\right) \otimes 1_{r(n)}\right)
\\\notag
&\hspace*{8em} {\mbox{}}=
\varphi_{r(n)}\left(\theta \left(w (z_g e_{1, 1} z^*_g \otimes 1_\nu)w^*\right) \otimes 1_{r(n)}\right)
\\\notag
&\hspace*{8em} {\mbox{}}=
\varphi_{r(n)}\left(\theta \left(w (e_{g, g} \otimes 1_\nu)w^*\right) \otimes 1_{r(n)}\right).
\end{align}
We use (\ref{Eq5.20200702}) and Construction~\ref{Ctn}(\ref{Ctn_12}) to compute
\begin{equation}\label{Eq2.20200306}
\left[\alpha^{(n+1)}_{g} (b)\right](x)
=
\left( \begin{matrix}
    0 & & & &0 \\
    & &  \ddots      &      \\
    & & &  0 \\
 0  & & & &   
 \varphi_{r(n)} \left( \theta \left(w ( e_{g, g} \otimes 1_{\nu})w^*\right) \otimes 1_{r(n)}\right)
 \end{matrix} \right).
\end{equation}
Now, using (\ref{Eq3.20200622}) at the first step and using (\ref{Eq1.20200622}) at the second step, we compute 
\begin{align}\label{Eq8.20200702}
&c_{h} (x_n) b_0 c_{h} (x_n)^*
\\\notag
& \hspace*{1em} {\mbox{}}=
\varphi_{r(n)}\left(\theta (w \left(1_{\nu} \otimes z_g\right) w^*)  \otimes 1_{r(n)}\right)
 b_0 \
\varphi_{r(n)} \left(\theta( w \left(1_{\nu} \otimes z^*_g\right) w^*)  \otimes 1_{r(n)} \right)
\\\notag
&\hspace*{1em} {\mbox{}}=
\varphi_{r(n)} \left(
\theta(w \left(e_{1, 1} \otimes 1_{\nu}\right) w^* ) \otimes 1_{r(n)}
\right).
\end{align}
Now, we use (\ref{Eq12.20200306}) and (\ref{EQ11.20200306}) to get
\begin{align}\label{Eq333.20200306}
\Big[ 
\Ad \big(\Gamma_{n+1, n} (h) \big) \left(b\right)
\Big](x)
 =
 \left( \begin{matrix}
    0 & & & &0 \\
    & &  \ddots      &      \\
    & & &  0 \\
 0  & & & &    
c_{h} (x_n) b_0 c_{h} (x_n)^*
 \end{matrix} \right).
\end{align}
Now, using (\ref{Eq2.20200306}), (\ref{Eq8.20200702}), and (\ref{Eq333.20200306}) at the first step, we get, for all $x \in X_{n+1}$,
\begin{align}\label{Eq7.20200308}
&\left\|
\left[\Ad \left(\Lambda_{n+1, n}(h)\right)(b)\right](x)
- 
\left[\alpha^{n+1}_{1} (b)\right](x) 
\right\|
\\\notag
&\hspace*{10em} {\mbox{}} 
=
\left\|
w \left(e_{1, 1} \otimes 1_{\nu}\right) w^*    
- 
w  \left( e_{g, g} \otimes 1_{\nu}\right) w^*
\right\|
\\\notag
&\hspace*{10em} {\mbox{}} =
\left\|
 e_{1, 1} \otimes 1_{\nu}   
- 
 e_{g, g} \otimes 1_{\nu}
\right\|
 =
\left\|
 e_{1, 1}    
- 
 e_{g, g}
\right\|=1.
\end{align}
This completes the proof of (\ref{CL.1}).

We prove (\ref{CL.2}).
Using $\| b_0 \|\leq 1$, $\lambda(x_n)\in \mathbb{T}$, and (\ref{Eq7.20200304}) at the second step,
we estimate,
\begin{align*}
&\left\|
c_{u} (x_n) b_0 c_{u} (x_n)^* - c_{h} (x_n) b_0 c_{h} (x_n)^*
\right\|
\\
&\hspace*{4em} {\mbox{}}
\leq
\left\| w \otimes 1_{r(n)}\right\| 
\cdot
\left\|1_{\nu} \otimes u(x_n) - 1_{\nu} \otimes \lambda(x_n) h(x_n) \right\|
\cdot
\left\|w^* \otimes 1_{r(n)} \right\|
\\\notag 
&\hspace*{13em} {\mbox{}} \cdot
\big\|
b_0
\big\|
\cdot
\big\|
\big[
\left(w \otimes 1_{r(n)}\right) \left(1_{\nu} \otimes u(x_n)\right)  \left(w^* \otimes 1_{r(n)}\right)
\big]^* 
\big\|
\\\notag
& \hspace*{6em} {\mbox{}}+
\left\|
\left(w \otimes 1_{r(n)}\right) \left(1_{\nu} \otimes \lambda(x_n) h(x_n)\right) \left(w^* \otimes 1_{r(n)}\right)
\right\|
\cdot
\big\|
b_0
\big\|
\cdot
\left\|w \otimes 1_{r(n)} \right\|
\\\notag 
&\hspace*{13em} {\mbox{}} \cdot
\big\|1_{\nu} \otimes u^*(x_n) - 1_{\nu} \otimes \overline{\lambda(x_n)} h^*(x_n) \big\|
\cdot
\big\|w^* \otimes 1_{r(n)} \big\|
\\\notag
&\hspace*{4em} {\mbox{}}<
\frac{\ep}{4} + \frac{\ep}{4}=\frac{\ep}{2}.
\end{align*}
Using this at the last step, we estimate, for all $x \in X_{n+1}$,
\begin{align*}
&\Big\| 
\left[\Ad \left(\Gamma_{n+1, n} \left(h\right)\right) (b)\right](x)
 - 
 \left[\Ad \left(\Gamma_{n+1, n}(u)\right) (b)\right] (x)
\Big\|
\\
&\hspace*{8em} {\mbox{}} =
\Big\|
c_{h} (x_n) b_0 c_{h} (x_n)^*
 -
c_{u} (x_n) b_0 c_{u} (x_n)^*
\Big\|
< 
\frac{\ep}{2}.
\end{align*}
This complete the proof of the claim.

Using (\ref{CL.1}) at the first step and using (\ref{CL.2}) and (\ref{Eq520200306}) at the third step,
 we get, for all $x \in X_{n+1}$,
\begin{align*}
&
1= 
\Big\|
\left[\Ad \left(\Gamma_{n+1, n}(h)\right)(b)\right](x)
- 
\left[\alpha^{(n+1)}_{g} (b)\right](x) 
\Big\|
\\
&\hspace*{.5em} {\mbox{}} \leq 
\Big\| 
\left[\Ad \left(\Gamma_{n+1, n} (h)\right) (b)\right](x)
 - 
 \left[\Ad \left(\Gamma_{n+1, n}(u)\right) (b)\right] (x)
  \Big\|
 \\&\hspace*{4em} {\mbox{}}+
\Big\|
\left[\Ad \left(\Gamma_{n+1, n}(u)\right) (b)\right] (x) 
- 
\left[\alpha^{(n+1)}_{g} (b)\right](x) 
\Big\|
\\\notag
&\hspace*{.5em} {\mbox{}}<
\frac{\ep}{2}+ \frac{\ep}{4} < \ep <1.
\end{align*}
This is a  contradiction.

Part (\ref{L_af_proper.d}) follows from simplicity of $A$, pointwise outeness of $\alpha$, and Theorem~3.1 of \cite{Ks1}.
\end{proof}
\begin{cor}\label{C_9422_IsomsInExample}
Assume the notation and choices in
Construction~\ref{Ctn}.
Then the maps
$\Cu (A) \to \Cu \left(C^*(G, A, \alpha)\right)$
and $\W (A) \to \W \left(C^*(G, A, \alpha)\right)$
are  isomorphisms onto their ranges.
\end{cor}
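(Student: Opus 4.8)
The plan is to observe that this corollary is a direct application of Theorem~\ref{WC_injectivity} once the relevant structural properties of the pair $(A,\alpha)$ coming out of Construction~\ref{Ctn} have been recorded. So the proof will be short, and essentially all of the substantive work has already been done in Proposition~\ref{L_af_proper}.

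First I would note that $A$ is a unital C*-algebra: it is the direct limit of the unital algebras $A_n = C(X_n, M_{\nu r(n)})$ along unital connecting maps $\Gamma_{n+1,n}$, as recorded in Construction~\ref{Ctn}(\ref{Ctn_11}), and Proposition~\ref{L_af_proper}(\ref{L_af_proper.a}) in any case asserts $A$ is stably finite, separable, and simple. Next, by Construction~\ref{Ctn}(\ref{Ctn_1}), $G$ is a finite group, and by Proposition~\ref{L_af_proper}(\ref{L_af_proper.c}) the action $\alpha\colon G \to \Aut(A)$ is strictly approximately inner. These are precisely the hypotheses of Theorem~\ref{WC_injectivity}.

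Then I would simply invoke Theorem~\ref{WC_injectivity}: part~(\ref{WC_injectivity_a}) gives that $\W(\iota)\colon \W(A) \to \W(C^*(G,A,\alpha))$ is an ordered semigroup isomorphism onto its range, and part~(\ref{WC_injectivity_b}) gives the same for $\Cu(\iota)\colon \Cu(A) \to \Cu(C^*(G,A,\alpha))$, where $\iota\colon A \to C^*(G,A,\alpha)$ is the canonical inclusion. Since the maps $\W(A) \to \W(C^*(G,A,\alpha))$ and $\Cu(A) \to \Cu(C^*(G,A,\alpha))$ referred to in the statement of the corollary are by definition $\W(\iota)$ and $\Cu(\iota)$, this is exactly the assertion to be proved.

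There is no real obstacle here; the only thing worth saying is that everything rests on having already verified strict approximate innerness of $\alpha$ in Proposition~\ref{L_af_proper}(\ref{L_af_proper.c}) (which itself reduces, via Lemma~\ref{Lem_inlim_in_ap}, to the fact that each $\alpha^{(n)}$ is inner), together with the general crossed-product injectivity result of Theorem~\ref{WC_injectivity}. If one wanted, one could also remark that the hypothesis of simplicity of $C^*(G,A,\alpha)$ from Proposition~\ref{L_af_proper}(\ref{L_af_proper.d}) is not even needed for this particular corollary, since Theorem~\ref{WC_injectivity} requires only that $\alpha$ be strictly approximately inner.
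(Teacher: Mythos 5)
Your proposal is correct and follows exactly the paper's own route: the paper also deduces the corollary immediately from Theorem~\ref{WC_injectivity} combined with Proposition~\ref{L_af_proper}(\ref{L_af_proper.c}), which supplies the strict approximate innerness of $\alpha$. Your additional remark that simplicity of $C^*(G,A,\alpha)$ is not needed here is accurate.
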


\begin{proof}
It is  essentially immediate from Theorem~\ref{WC_injectivity}
 and Proposition~\ref{L_af_proper}(\ref{L_af_proper.c}).
\end{proof}
Now we compute precisely the radius of comparison of the C*-algebra $A$ in the following theorem. 
\begin{thm}\label{rc.ctn.20200325}
Assume the notation and choices in Construction
\ref{Ctn} and~\Ntn{Eq3.Bott}.
Then $\rc (A) =\eta$.
\end{thm}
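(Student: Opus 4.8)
The plan is to compute $\rc(A)$ by sandwiching it between $\eta$ and $\eta$ using the two halves of the algebraic characterization of the radius of comparison (Proposition~\ref{rc_alg_def}) applied at the finite stages $A_n = C(X_n, M_{\nu r(n)})$, together with the dimension-growth behavior of the $X_n$ and the ranks $\nu r(n)$. The key numerical input is Construction~\ref{Ctn}(\ref{Ctn_2})--(\ref{Ctn_3}): the dimension of $X_n$ is $2 s(n)$ while the matrix size is $\nu r(n)$, and $u(n) = s(n)/r(n) \to \nu\eta$, so $\dim(X_n)/(2 \cdot \nu r(n)) = s(n)/(2\nu r(n)) = u(n)/(2\nu) \to \eta/2$; this factor of $1/2$ is exactly the one relating covering dimension to the radius of comparison of $C(X)$. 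The connecting maps $\Gamma_{n+1,n}$ have $d(n+1)$ ``coordinate projection'' summands plus one point-evaluation summand whose normalized rank is $\nu r(n)/(\nu r(n+1)) = 1/l(n+1)$, tending to $0$ since $l(n)\to\infty$; this ensures the AH system has slow dimension growth and that the radius of comparison is computed as the $\limsup$ of the stagewise ratios.

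\textbf{Lower bound $\rc(A) \geq \eta$.} First I would exhibit, at each stage, positive elements $a_n, b_n \in M_\infty(A_n)$ witnessing failure of $s$-comparison for $s < \eta$, using the standard construction for $C(X)$ with $X$ of large covering dimension (as in Construction~1.1 of \cite{HP19} / Construction~6.1 of \cite{AGP19}): take $b_n$ built from a trivial projection of appropriate rank and $a_n$ from a vector bundle over $X_n$ of rank close to $\tfrac{1}{2}\dim(X_n)$ with nonvanishing Euler-type class, so that $a_n$ is not Cuntz below $b_n$ even though the dimension-function inequality $d_\rho(a_n) + s < d_\rho(b_n)$ holds for all quasitraces (equivalently, normalized traces, which here correspond to points of $X_n$) when $s/\nu$ is slightly less than $s(n)/(2\nu r(n))$... wait, more precisely $s < 2\cdot(\text{rank deficit})/(\nu r(n))$. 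I would push these elements forward under $\Gamma_{\infty,n}$; the point-evaluation summand only adds a vanishingly small ($1/l(n+1)\to 0$) perturbation to the ranks and does not destroy the topological obstruction, because the coordinate-projection summands pull the bundle back to one over $X_{n+1}$ with still-nonvanishing characteristic class (here the choice of dense orbits in Construction~\ref{Ctn}(\ref{Ctn_8}) guarantees the limit algebra is simple but does not kill these classes at finite stages). Letting $n\to\infty$ and using $s(n)/(2\nu r(n)) = u(n)/(2\nu) \to \eta/2$, rescaled by the $M_\nu$ factor, gives $\rc(A) \geq \eta$. This is the step I expect to be the main obstacle: one must verify that the characteristic-class obstruction genuinely survives the direct limit — i.e., that $a_n \not\precsim_{A_m} b_n$ for all $m \geq n$ and hence $a_n\not\precsim_A b_n$ — and control the exact ranks through the maps $\theta, \varphi, \psi, \sigma$ of Construction~\ref{Ctn}(\ref{Ctn_4}).

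\textbf{Upper bound $\rc(A) \leq \eta$.} For the reverse inequality I would use Proposition~\ref{rc_alg_def} directly in the limit: given $m/n > \eta$ and $a,b\in M_\infty(A)_+$ with $(n+1)\langle a\rangle_A + m\langle 1\rangle_A \leq \langle b\rangle_A$, I would approximate $a, b$ by elements coming from a finite stage $A_N$ and show, using the known radius of comparison bound $\rc(C(X,M_k)) \leq \dim(X)/(2k)$ (which follows from \cite{BRTTW12,EN13} together with Proposition~\ref{Prp6.2.Tom06}(\ref{Prp6.2.Tom06.b})), that for $N$ large enough $\dim(X_N)/(2\nu r(N)) = u(N)/(2\nu) < m/n$, so that the inequality of Cuntz classes, which persists at stage $A_N$ up to an arbitrarily small error, forces $a \precsim_{A_N} b$ and hence $a \precsim_A b$; the tail-estimate handling of the point-evaluation summands is routine given $l(n)\to\infty$. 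Taking the infimum over admissible $m/n$ gives $\rc(A) \leq \eta$. Combining the two bounds yields $\rc(A) = \eta$, as claimed. (I note that the reference ``\Ntn{Eq3.Bott}'' in the statement supplies the specific bundle/Bott-element notation used to make the lower-bound elements $a_n$ explicit.)
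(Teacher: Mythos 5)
Your outline matches the paper's proof in both directions: the upper bound via $\rc(A_n)\leq \dim(X_n)/(2\nu r(n))$ for the stages $A_n=C(X_n,M_{\nu r(n)})$ together with lower semicontinuity of the radius of comparison along the direct limit (the paper simply quotes Proposition~2.13(ii) of \cite{AA20} here, while you re-derive it via Proposition~\ref{rc_alg_def}; that part is routine), and the lower bound by playing a Bott-type element against trivial projections of slightly larger trace. The genuine gap is in the lower bound, and it is exactly the step you yourself flag as ``the main obstacle'' and then do not carry out: showing that the topological obstruction survives all the connecting maps with the precise rank threshold that makes the bound come out to $\eta$. In the paper this requires three concrete ingredients: \Lem{ProjectionRank22'}, which identifies $\Gamma_{n,0}(p)$ as the direct sum of $s(n)$ pullbacks of the Bott projection along the coordinate projections plus a \emph{constant} projection of rank $r(n)-s(n)$ (so $d_\rho(p_n)=\tfrac{1}{\nu}$ for every trace); \Lem{C_7808_BigRank2} (via Corollary~6.13/6.20 of \cite{AGP19}), the quantitative obstruction that any trivial projection $e$ with $\|x e x^* - p_m\|<\tfrac12$ has $\rank(e)\geq r(m)+s(m)$; and the rank count $\rank\bigl(\Gamma_{m,n}(e)\bigr)=M\,r(m)/r(n)<r(m)+s(m)$ for all $m>n$, which rests on the monotonicity $u(m)>\nu\eta$ from \Lem{L_Kap_20200325}. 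Without these (or their analogues for your stage-$n$ witnesses, whose pushforwards also acquire trivial summands from the point-evaluation block of $\Gamma_{n+1,n}$), the claim $a_n\not\precsim_A b_n$ is not established and $\rc(A)\geq\eta$ is unproven.

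There is also a numerical error running through your plan: since $\dim X_n = 2 s(n)$, one has $\dim(X_n)/(2\nu r(n)) = s(n)/(\nu r(n)) = u(n)/\nu \to \eta$, not $u(n)/(2\nu)\to \eta/2$, and the attempted repair ``rescaled by the $M_\nu$ factor'' is not a legitimate step (it would yield $\eta$ only when $\nu=2$). With the correct arithmetic no rescaling is needed; the same factor-of-two slip occurs harmlessly in your upper bound, where all that is needed is $u(N)/\nu< m/n$ for large $N$. The quantity that pins the lower bound at exactly $\eta$ is the gap between the trace of $p_m$, namely $\tfrac{1}{\nu}$, and the obstruction threshold $(r(m)+s(m))/(\nu r(m))=\tfrac{1}{\nu}+\tfrac{u(m)}{\nu}$, which stays above $\tfrac{1}{\nu}+\eta$ for all $m$; making this bookkeeping precise is the ``control the exact ranks'' that your proposal defers.
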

The proof of Theorem~\ref{rc.ctn.20200325} requires some preparation.
\begin{ntn}\label{Eq3.Bott}
Let $p \in M_2 \big(C (X_0)\big)$ be the Bott projection.
When $\nu>2$, by abuse of notion, we use $p$ to denote $p\oplus 0 \in M_{\nu} \big(C(X_0)\big)$.
Assuming the notation and choices in
Construction~\ref{Ctn}.
For $n \in \Nz$,
set $p_n =  \Gamma_{n, 0} (p) \in A_{n}$.
In particular, $p_0 = p$.
\end{ntn}
The set of normalized traces on a unital C*-algebra is denoted by $\T (A)$.
It was shown in \cite{Hag14} that $T(A)=\QT(A)$ for a exact unital C*-algebra $A$.
\begin{lem}\label{ProjectionRank22'}
Adopt the assumptions and notation
of Notation~\ref{Eq3.Bott}.
Let $n \in \Nz$ and for $j = 1, 2, \ldots, s (n)$
let $R_j^{(n)} \colon (S^2)^{s (n)} \to S^2$
be the $j$~coordinate projection.
Then:
\begin{enumerate}
\item
\label{ProjectionRank22_a'}
There are orthogonal projections $y_{n}, z_{n}$ in
$M_{\nu r (n)} \big( C (X_n) \big)$
such that:
\begin{itemize}
\item
$p_n = y_{n} + z_{n}$.
\item
 $y_{n}$ is
the direct sum of the projections
$p \circ R^{(n)}_{j}$ for $j = 1, 2, \ldots, s (n)$. 
\item
$z_{n}$ is a constant projection of rank $r (n) - s (n)$.
\end{itemize}
\item
\label{ProjectionRank22_b'}
For every $n \in \Nz$ and $\rho \in \T (A_{n})$, we have
$d_{\rho} (p_n) = \frac{1 }{\nu}$.
\end{enumerate}
\end{lem}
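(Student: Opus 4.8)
The plan is to prove part~(\ref{ProjectionRank22_a'}) by induction on $n$, using the explicit form of the connecting maps $\Gamma_{n+1,n}$ recorded in Construction~\ref{Ctn}(\ref{Ctn_10}), and then to read off part~(\ref{ProjectionRank22_b'}) from the fact (obtained along the way) that $p_n$ has constant rank $r(n)$, together with the standard description of the normalized traces on the homogeneous C*-algebra $A_n=C(X_n)\otimes M_{\nu r(n)}$. For the base case $n=0$, note that $X_0=S^2$, that $s(0)=r(0)=1$, and that $R^{(0)}_1$ is the identity of $S^2$; so one takes $y_0=p_0=p$ and $z_0=0$ (the zero projection, of rank $r(0)-s(0)=0$), and all three bullet points hold trivially.

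For the inductive step, assume $p_n=y_n+z_n$ with $y_n,z_n$ as in the statement. In particular $y_nz_n=0$, $z_n$ is a constant projection of rank $r(n)-s(n)$, and, since the Bott projection has rank one at every point of $S^2$, $y_n(x)$ has rank $s(n)$ and hence $p_n(x)$ has rank $r(n)$ for every $x\in X_n$. Applying $\Gamma_{n+1,n}$ and using that $z_n$ is constant (so $z_n\circ P^{(n)}_j=z_n$) one gets, for $x\in X_{n+1}$,
\[
p_{n+1}(x)=\diag\bigl((y_n\circ P^{(n)}_1)(x)+z_n,\ \ldots,\ (y_n\circ P^{(n)}_{d(n+1)})(x)+z_n,\ c_{p_n}(x_n)\bigr).
\]
I would then define $y_{n+1}\in A_{n+1}$ to be the projection equal to $y_n\circ P^{(n)}_j$ on the $j$-th of the first $d(n+1)$ diagonal blocks (each of size $\nu r(n)$) and equal to $0$ on the final block (of size $\nu^2 r(n)$), and define $z_{n+1}\in A_{n+1}$ to be the projection equal to $z_n$ on each of the first $d(n+1)$ blocks and equal to $c_{p_n}(x_n)$ on the final block. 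Then $p_{n+1}=y_{n+1}+z_{n+1}$ and $y_{n+1}z_{n+1}=0$, both immediate from the blockwise relations and $y_nz_n=0$.

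It remains to verify the three bullet points for $y_{n+1}$ and $z_{n+1}$. Identifying $X_{n+1}\cong X_n^{\,d(n+1)}$ so that $P^{(n)}_j$ is the projection onto the $j$-th factor and $R^{(n)}_k$ is the $k$-th $S^2$-coordinate of $X_n=(S^2)^{s(n)}$, one has $R^{(n)}_k\circ P^{(n)}_j=R^{(n+1)}_{(j-1)s(n)+k}$, and as $(j,k)$ runs over $\{1,\dots,d(n+1)\}\times\{1,\dots,s(n)\}$ the index $(j-1)s(n)+k$ runs bijectively over $\{1,\dots,s(n+1)\}$. By the inductive hypothesis it follows that $y_{n+1}$ is the direct sum of the projections $p\circ R^{(n+1)}_i$, $i=1,\dots,s(n+1)$ (padded inside $M_{\nu r(n+1)}(C(X_{n+1}))$ by a zero projection), and in particular has pointwise rank $s(n+1)$. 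Next, $z_{n+1}$ is block-diagonal with all blocks constant, hence constant; and since $c_{p_n}(x_n)$ is, by Construction~\ref{Ctn}(\ref{Ctn_10-}), unitarily equivalent via $\varphi_{r(n)}(\theta(w)\otimes 1_{r(n)})$ to $\psi_{r(n)}(1_\nu\otimes p_n(x_n))$ and $p_n(x_n)$ has rank $r(n)$, the block $c_{p_n}(x_n)$ is a projection of rank $\nu r(n)$. Hence $z_{n+1}$ has rank $d(n+1)(r(n)-s(n))+\nu r(n)=r(n)(d(n+1)+\nu)-s(n)d(n+1)=r(n+1)-s(n+1)$, using $r(n+1)=r(n)l(n+1)=r(n)(d(n+1)+\nu)$ and $s(n+1)=s(n)d(n+1)$. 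This closes the induction, and shows in passing that $p_n(x)$ has rank $r(n)$ for every $n$ and every $x\in X_n$.

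For part~(\ref{ProjectionRank22_b'}), since $p_n$ is a projection we have $d_\rho(p_n)=\rho(p_n)$ for every $\rho\in\T(A_n)$. Every normalized trace on $A_n=C(X_n)\otimes M_{\nu r(n)}$ is of the form $\rho(f)=\int_{X_n}\tr_{\nu r(n)}(f(x))\,d\mu(x)$ for a Borel probability measure $\mu$ on $X_n$, where $\tr_{\nu r(n)}$ is the normalized trace on $M_{\nu r(n)}$ (this follows from the uniqueness of the normalized trace on a matrix algebra together with a routine matrix-unit computation). Since $p_n(x)$ has rank $r(n)$ for every $x$, we get $\tr_{\nu r(n)}(p_n(x))=r(n)/(\nu r(n))=1/\nu$ for all $x$, whence $\rho(p_n)=\int_{X_n}\tfrac1\nu\,d\mu=\tfrac1\nu$. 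The one place where care will be needed is the bookkeeping in the inductive step --- matching the summands $p\circ R^{(n)}_k\circ P^{(n)}_j$ with the coordinate projections $R^{(n+1)}_i$ of $X_{n+1}$, and tracking the dimension of the twisted block $c_{p_n}(x_n)$ through the identifications $\theta$, $\varphi_{r(n)}$, $\psi_{r(n)}$ of Construction~\ref{Ctn}(\ref{Ctn_4.ITN1})--(\ref{Ctn_4.ITN3}) --- but this is routine once the conventions are fixed.
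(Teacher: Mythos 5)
Your argument is correct and follows essentially the same route as the paper: induction on $n$ via the explicit form of $\Gamma_{n+1,n}$, with the rank bookkeeping $d(n+1)\,[r(n)-s(n)]+\nu r(n)=r(n+1)-s(n+1)$, and then part~(\ref{ProjectionRank22_b'}) from the constant rank $r(n)$ of $p_n$. The only (harmless) difference is cosmetic: the paper evaluates only the extreme traces $\tr_{\nu r(n)}\otimes \ev_x$, whereas you write an arbitrary trace as an integral over $X_n$, which is just the convexity step made explicit.
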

\begin{proof}
The formula holds for $n = 0$,
since $r (0) = s (0) = 1$.
Now assume that it is known for~$n$.
The important facts for the induction step are the following:
\begin{itemize}
\item
$\Gamma_{n + 1, n} ( y_{n})$
is the direct sum of the projections
$p \circ R^{(n)}_{j} \circ P^{(n)}_{k}$
for $j = 1, 2, \ldots, s (n)$ and $k = 1, 2, \ldots, d (n + 1)$
and a constant projection of rank
$\nu s (n)$.
\item
$\Gamma_{n + 1, n} ( z_{n})$
is a constant projection of rank
\[
[r (n) - s (n)] d(n+1) + \nu [r(n) - s(n)].
\]
\end{itemize}
Putting these together,
we get that $\Gamma_{n + 1, n} (p_n)$ is the direct sum of
$y_{n + 1}$ as described
and a constant function of rank
\begin{align*}
[r (n) - s (n)] d(n+1) + \nu [r(n) - s(n)] + \nu s (n)
&=
r (n) [ d(n+1) + \nu ] - s(n) d(n+1)
\\&=
r (n) l(n+1) + s(n) d(n+1) 
\\&
= r(n+1) - s(n+1).
\end{align*}
This completes the induction.

To prove (\ref{ProjectionRank22_b'}),
let $\rho$ be extreme in $\T (A_{n})$.
Then there is $x \in X_{n}$  such that
$\rho = \tr_{\nu r (n)} \otimes \ev_{x}$.
Therefore
\begin{align*}
d_{\rho} (p_n)
&=
 \rho (p_n)
=
 \frac{1}{\nu r (n)} \rank (p_{n} (x))
=
\frac{s (n) + [r (n) - s (n)]}{\nu r (n)} 
=
\frac{1}{\nu}. 
\end{align*}
This completes the proof of~(\ref{ProjectionRank22_b'}).
\end{proof}
\begin{dfn}
Let $X$ be a  compact Hausdorff space.
A projection $q$ in $M_{\infty} ( C (X))$ is said to be \emph{trivial}
if there exists $n \in \Nz$ such that $q$ is Murray-von Neumann
equivalent to $1_{M_{n} (C (X))}$.
When $n = 0$, this means $p = 0$.
\end{dfn}
\begin{lem}\label{C_7808_BigRank2}
Adopt the assumptions and notation of Notation~\ref{Eq3.Bott}.
Let $n \in \Nz$ and let $e$ be a trivial projection in
$M_{\infty} (A_n) \cong M_{\infty} (C (X_n) )$.
If there exists $x \in M_{\infty} (A_n)$
such that $\| x e x^* - p_{n} \| < \frac{1}{2}$ then
$\rank (e) \geq  r (n) + s (n)$.
\end{lem}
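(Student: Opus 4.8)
The plan is to reduce the statement to a Chern-class obstruction on the base space $X_{n} = (S^{2})^{s(n)}$.

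First I would pass from the norm estimate to a comparison of projections. Since $\|x e x^{*} - p_{n}\| < \tfrac12$, Lemma~\ref{PhiB.Lem_18_4}(\ref{PhiB.Lem_18_4_10.a}) with $\eta = \tfrac12$ gives $(p_{n} - \tfrac12)_{+} \precsim_{A_{n}} x e x^{*}$. As $p_{n}$ is a projection, continuous functional calculus gives $(p_{n} - \tfrac12)_{+} = \tfrac12 p_{n} \sim_{A_{n}} p_{n}$; and as $e$ is a projection, $x e x^{*} = (xe)(xe)^{*} \sim_{A_{n}} (xe)^{*}(xe) = e x^{*} x e \leq \|x\|^{2} e$, so $x e x^{*} \precsim_{A_{n}} e$. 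Hence $p_{n} \precsim_{A_{n}} e$. Since $p_{n}$ and $e$ are both projections, a standard fact about Cuntz comparison of projections (see, e.g., \cite{Ror92}) shows $p_{n}$ is \mvnt{} to a subprojection $q$ of $e$, so that $p_{n} \oplus (e - q) \sim_{A_{n}} q \oplus (e - q) \sim_{A_{n}} e$, which is equivalent to the trivial projection of rank $r$, where $r = \rank(e)$.

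Next I would translate this into vector bundles over $X_{n}$. Identifying projections in $M_{\infty}(A_{n}) \cong M_{\infty}(C(X_{n}))$ with complex vector bundles over the connected space $X_{n}$ via Serre--Swan, the conclusion of the previous step says there is a bundle $F$ over $X_{n}$ of rank $r - r(n)$ whose Whitney sum with $E_{p_{n}}$ is a trivial bundle of rank $r$. By Lemma~\ref{ProjectionRank22'}(\ref{ProjectionRank22_a'}), $E_{p_{n}}$ is the direct sum of the pullbacks $(R^{(n)}_{j})^{*} L$ for $j = 1, \dots, s(n)$, where $L$ is the line bundle over $S^{2}$ determined by the Bott projection $p$ and $R^{(n)}_{j} \colon X_{n} \to S^{2}$ is the $j$-th coordinate projection, together with a trivial bundle of rank $r(n) - s(n)$. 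Since $c_{1}(L)$ generates $H^{2}(S^{2}; \Z) \cong \Z$, setting $t_{j} = (R^{(n)}_{j})^{*} c_{1}(L)$ the K\"unneth theorem gives $H^{*}(X_{n}; \Z) \cong \Z[t_{1}, \dots, t_{s(n)}]/(t_{1}^{2}, \dots, t_{s(n)}^{2})$, and $t_{1} t_{2} \cdots t_{s(n)}$ generates $H^{2 s(n)}(X_{n}; \Z) \cong \Z$.

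Finally I would run the Chern-class computation. The total Chern class of $E_{p_{n}}$ is $c(E_{p_{n}}) = \prod_{j=1}^{s(n)} (1 + t_{j})$, since the trivial summand contributes $1$. Whitney's product formula, applied to the isomorphism of $E_{p_{n}} \oplus F$ with a trivial bundle, gives $c(F) = c(E_{p_{n}})^{-1} = \prod_{j=1}^{s(n)} (1 - t_{j})$, using $t_{j}^{2} = 0$. Its degree-$2s(n)$ component is $c_{s(n)}(F) = (-1)^{s(n)} t_{1} t_{2} \cdots t_{s(n)} \neq 0$. Since the Chern classes of a rank-$m$ bundle vanish in degrees above $2m$, the nonvanishing of $c_{s(n)}(F)$ forces $\rank(F) \geq s(n)$, i.e.\ $r - r(n) \geq s(n)$, which is precisely $\rank(e) = r \geq r(n) + s(n)$. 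The substantive part of the argument is this last topological input together with the bundle identification from Lemma~\ref{ProjectionRank22'}; the reduction to Murray--von Neumann subequivalence of projections is routine but deserves an explicit citation.
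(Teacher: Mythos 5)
Your proof is correct and follows essentially the route the paper intends: the paper's own proof is just a deferral to Corollary~6.13/6.20 of \cite{AGP19}, whose argument is exactly this passage from the norm estimate to Murray--von Neumann subequivalence of projections followed by the Chern-class (K\"unneth) obstruction on $(S^2)^{s(n)}$. You simply spell out the details the paper leaves to the reference, with the correct bookkeeping ($\rank(p_n) = r(n)$, and nonvanishing of the degree-$2s(n)$ Chern class of the complementary bundle forcing $\rank(e) - r(n) \geq s(n)$).
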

\begin{proof}
The proof is essentially the same as that of
Corollary~6.13 (or Corollary~6.20)  of \cite{AGP19}. 
\end{proof}
\begin{proof}[Proof of Theorem~\ref{rc.ctn.20200325}]
We  use Construction~\ref{Ctn}(\ref{Ctn_9}), Proposition~\ref{Prp6.2.Tom06}(\ref{Prp6.2.Tom06.b}) and Corollary~1.2 of \cite{EN13} to get 
\begin{equation}
\label{Eq1.20200702}
\rc (A_n)\leq \frac{\dim X_n}{2 \nu r(n)} = \frac{s(n)}{\nu r(n)}.
\end{equation}
Using the fact that $A$ is easily seen to be residually stably finite
(that is, all of its quotients are stably finite) and Proposition 2.13(ii) of~\cite{AA20} at the first step,
using (\ref{Eq1.20200702}) at the second step, and using Lemma~\ref{L_Kap_20200325} at the third step, we get 
\[
\rc (A) \leq \liminf_{n \to \infty} \rc (A_n) = \lim_{n \to \infty} \frac{s(n)}{\nu r(n)} =\eta.
\]

Now, it suffices to prove that
$\rc (A) \geq \eta$.
Suppose $\lambda < \eta$.
We show that $A$ does not have $\lambda$-comparison.
Choose $n \in \N$ such that $1 / {r (n)} < \eta - \lambda$.
Choose $M \in \Nz$ such that
\begin{equation}\label{Eq1.20200426}
\lambda + \frac{1}{\nu}
 < \frac{M}{ \nu r (n)}
 < \eta + \frac{1}{\nu}.
 \end{equation}
Let $e \in M_{\infty} (A_n)$ be a trivial \pj{}
of rank~$M$.
By slight abuse of notation,
we use $\Gamma_{m, n}$ to denote the amplified map
from $M_{\infty} (A_n)$ to $M_{\infty} (A_m)$ as well.
For $m > n$, the rank of $\Gamma_{m, n} (e)$
is $M \cdot \frac{r (m)}{r (n)}$.
We claim that, for $m > n$,
\begin{equation}\label{Eq2.20200426}
\rank \big(\Gamma_{m, n} (e) \big) < r (m) +  s(m).
\end{equation}
To prove the claim, suppose that
$\rank \big( \Gamma_{m, n} (e) \big) \geq  r (m) +  s(m)$.
Then, by (\ref{Eq1.20200426}),
\[
 r (m) + s(m) \leq M \cdot \frac{r (m)}{r (n)} < \nu \left(\eta + \frac{1}{\nu}\right) r (m).
\]
Thus, $\frac{s(m)}{r(m)} < \nu \eta$. This contradicts Lemma~\ref{L_Kap_20200325}. 
So the claim follows.

Now, for any extreme tracial state $\rho$ on $A_m$
(and thus for any trace on $A$),
we get, using \ref{Eq1.20200426} at the third step and  using \Lem{ProjectionRank22'}(\ref{ProjectionRank22_b'})
in the last step,
\begin{equation*}
d_{\rho} \big(\Gamma_{m, n} (e)\big)
 = \rho \big(\Gamma_{m, n} (e)\big)
 = \frac{1}{\nu r (m)} \cdot M \cdot \frac{r (m)}{r (n)}
> \lambda + \frac{1}{\nu}
= 
 \lambda + d_{\rho} (p_{m}).
\end{equation*}
On the other hand, if
$\Gamma_{\infty, 0} (p) \precsim_A \Gamma_{\infty, n} (e)$,
then there exist some $m > n$
and $x \in M_{\infty} (A_m)$
such that 
\[
\|x \Gamma_{m, n} (e)x^* - p_m\| < \frac{1}{2}.
\]
Using this and Lemma~\ref{C_7808_BigRank2}, we get
\[
\rank \big(\Gamma_{m,n} (e)\big) \geq r (m) + s (m).
\]
This contradicts (\ref{Eq2.20200426}), and we have proved that
$A$ does not have $\lambda$-comparison.
\end{proof}
Although we proved in Proposition~\ref{L_af_proper}(\ref{L_af_proper.c'}) that $\alpha$ is pointwise outer,
we show that $\alpha$ is far from having Rokhlin property in the following theorem.
 We further compute precisely the radius of comparison of $C^*(G, A, \alpha)$ .
\begin{thm}\label{Thm_rc_cross}
Assume the notation and choices in Construction
\ref{Ctn} and~\Ntn{Eq3.Bott}. Then:
\begin{enumerate}
\item\label{rc.Cros.1}
$\rc \left(C^*(G, A, \alpha)\right)= \eta$.
\item\label{rc.Cros.2}
$\alpha$ doesn't have the weak tracial Rokhlin property.
\end{enumerate}
\end{thm}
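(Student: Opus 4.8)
For part~(\ref{rc.Cros.1}), the plan is to sandwich $\rc\big(C^*(G,A,\alpha)\big)$ between two copies of $\eta$. The lower bound $\rc(A)\le\rc\big(C^*(G,A,\alpha)\big)$ comes directly from Theorem~\ref{Thm_rc_In}(\ref{Thm_rc_In.a}), together with the facts that $\alpha$ is strictly approximately inner (Proposition~\ref{L_af_proper}(\ref{L_af_proper.c})) and that $\rc(A)=\eta$ (Theorem~\ref{rc.ctn.20200325}); this gives $\eta\le\rc\big(C^*(G,A,\alpha)\big)$. For the reverse inequality, since $C^*(G,A,\alpha)$ is simple (Proposition~\ref{L_af_proper}(\ref{L_af_proper.d})) and $p=\frac{1}{\nu}\sum_{g\in G}u_g$ is a full projection in it, Theorem~\ref{Thm_rc_In}(\ref{Thm_rc_In.b}) gives $\rc\big(C^*(G,A,\alpha)\big)\le\rc(A^{\alpha})$, so it suffices to identify $A^{\alpha}$ well enough to see that $\rc(A^{\alpha})=\eta$ too. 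The fixed point algebra is the direct limit of the $A_n^{\alpha^{(n)}}=C\big(X_n, (M_{\nu r(n)})^{\alpha^{(n)}}\big)$, and since $\alpha^{(n)}_g=\Ad(\sigma_{r(n)}(z_g\otimes 1_{r(n)}))$, the fixed-point matrix algebra is $\{\sigma_{r(n)}(z_g\otimes1_{r(n)})\colon g\in G\}'\cong M_\nu\otimes M_{r(n)}$ (the commutant of the left regular representation is the right regular representation, itself isomorphic to $M_\nu$), i.e.\ $A_n^{\alpha^{(n)}}\cong C(X_n,M_{\nu r(n)})=A_n$. One then checks that the connecting maps restrict to maps of the same ``diagonal plus one twisted summand'' form, so $A^{\alpha}$ is another AH~algebra built from exactly the same spaces $X_n$ and the same multiplicities $d(n),l(n)$; repeating verbatim the argument of Theorem~\ref{rc.ctn.20200325} (the bound $\rc(A_n^{\alpha^{(n)}})\le \tfrac{\dim X_n}{2\nu r(n)}=\tfrac{s(n)}{\nu r(n)}\to\eta$ for the upper bound via Proposition~2.13(ii) of~\cite{AA20}, and the rank obstruction of Lemma~\ref{C_7808_BigRank2} applied to the Bott-type projection for the lower bound) yields $\rc(A^{\alpha})=\eta$. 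Hence $\rc\big(C^*(G,A,\alpha)\big)=\eta$.

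For part~(\ref{rc.Cros.2}), the plan is a proof by contradiction using the rigidity already established. Suppose $\alpha$ had the weak tracial Rokhlin property (Definition~3.2 of~\cite{AGP19}). Since $A$ is an infinite-dimensional simple unital stably finite C*-algebra with $0<\rc(A)=\eta<\infty$, and $\alpha$ is tracially strictly approximately inner (indeed strictly approximately inner, which is a fortiori the tracial version), Proposition~\ref{Pr.Tracial.In.Ro} directly forbids this — there is no action of a nontrivial finite group on such an $A$ that simultaneously has the weak tracial Rokhlin property and is tracially strictly approximately inner. Alternatively, and more quantitatively, Corollary~\ref{Cor.R.I.almost} would force $\rc(A)=0$, contradicting $\rc(A)=\eta>0$; or one can run the estimate of Proposition~\ref{Pr.In.Ro} directly: Theorem~4.6 of~\cite{AGP19} combined with Theorem~\ref{Th_rc_Tr_inner}(\ref{Th_rc_Tr_inner.a}) gives $\eta=\rc(A)\le\rc\big(C^*(G,A,\alpha)\big)\le\frac{1}{\nu}\rc(A)=\frac{\eta}{\nu}$, impossible since $\nu=\card(G)\ge2$ and $\eta>0$.

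The main obstacle is the computation $\rc(A^{\alpha})=\eta$ needed for the upper bound in part~(\ref{rc.Cros.1}): one must verify carefully that passing to the fixed-point algebra really does reproduce an AH~system of the same combinatorial shape — in particular that the ``twisting'' unitary $w$ from Construction~\ref{Ctn}(\ref{Ctn_5}) restricts compatibly to the fixed-point summands and that the extra rank-$\nu[r(n)-s(n)]+\nu s(n)$ constant bookkeeping in the connecting maps is unchanged — so that Lemma~\ref{C_7808_BigRank2} and the dimension estimate apply unmodified. Everything else is either a direct citation (Theorems~\ref{Thm_rc_In}, \ref{Th_rc_Tr_inner}, \ref{rc.ctn.20200325}, Propositions~\ref{Pr.In.Ro}, \ref{Pr.Tracial.In.Ro}, Corollary~\ref{Cor.R.I.almost}) or a routine adaptation; I would present part~(\ref{rc.Cros.2}) via Proposition~\ref{Pr.Tracial.In.Ro} as the cleanest route.
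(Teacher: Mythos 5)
Your part~(\ref{rc.Cros.2}) and the lower bound $\eta=\rc(A)\leq\rc\left(C^*(G,A,\alpha)\right)$ in part~(\ref{rc.Cros.1}) are exactly the paper's argument (Theorem~\ref{rc.ctn.20200325}, Proposition~\ref{L_af_proper}, Theorem~\ref{Thm_rc_In}(\ref{Thm_rc_In.a}), Proposition~\ref{Pr.Tracial.In.Ro}). The problem is your route to the upper bound, and the gap is precisely at the point you yourself flag as ``the main obstacle''. The commutant of the left regular representation of $G$ in $B\bigl(l^2(G)\bigr)\cong M_{\nu}$ is \emph{not} $M_{\nu}$; it is the algebra generated by the right regular representation, i.e.\ the group algebra $\C[G]\cong\bigoplus_{j=1}^{m}M_{t(j)}$, which has dimension $\sum_j t(j)^2=\nu$, not $\nu^2$ (for $G=\Z/2\Z$ it is $\C\oplus\C$, not $M_2$). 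Hence $A_n^{\alpha^{(n)}}\cong\bigoplus_{j=1}^{m}C\bigl(X_n,M_{t(j)r(n)}\bigr)$, not $C\bigl(X_n,M_{\nu r(n)}\bigr)$, and the fixed-point system is \emph{not} an AH system ``of the same combinatorial shape'' as $A$. Because $t(1)=1$, the trivial-representation summand is $C\bigl(X_n,M_{r(n)}\bigr)$, whose radius-of-comparison bound is of order $s(n)/r(n)\to\nu\eta$, so the natural estimate this route produces is $\rc(A^{\alpha})\approx\nu\eta$, not $\eta$; the inequality $\rc\left(C^*(G,A,\alpha)\right)\leq\rc(A^{\alpha})$ from Theorem~\ref{Thm_rc_In}(\ref{Thm_rc_In.b}) therefore cannot deliver the needed bound $\leq\eta$. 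Indeed, the remark after Theorem~\ref{Thm_rc_In} already shows that $\rc\left(C^*(G,A,\alpha)\right)<\rc(A^{\alpha})$ strictly whenever the former is nonzero, so no computation of $\rc(A^{\alpha})$ can close part~(\ref{rc.Cros.1}) this way.

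The paper's upper bound goes through the crossed products of the finite stages instead of the fixed point algebra: since each $\alpha^{(n)}$ is inner, $C^*\bigl(G,A_n,\alpha^{(n)}\bigr)\cong C^*(G)\otimes A_n\cong\bigoplus_{j=1}^{m}M_{t(j)}(A_n)$, and $C^*(G,A,\alpha)\cong\dirlim C^*\bigl(G,A_n,\alpha^{(n)}\bigr)$. Then Proposition~\ref{Prp6.2.Tom06} (with $t(1)=1$, so the maximum over summands is attained at $M_{t(1)}\otimes A_n=A_n$), the commutative bound of \cite{EN13}, and the $\liminf$ estimate for direct limits (Proposition~2.11(ii) of \cite{AA20}) give $\rc\left(C^*(G,A,\alpha)\right)\leq\lim_{n\to\infty}s(n)/(\nu r(n))=\eta$. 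Note the contrast with your plan: in the crossed product the ``small'' summand $M_{t(1)}(A_n)=A_n$ has fibre size $\nu r(n)$, so its contribution is $s(n)/(\nu r(n))\to\eta$, whereas in the fixed point algebra the corresponding summand has fibre size only $r(n)$, which is exactly why that route loses a factor of $\nu$. If you want to salvage your approach you must abandon the identification $A_n^{\alpha^{(n)}}\cong A_n$ and work with the crossed product directly, which is the paper's proof.
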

\begin{proof}
To prove (\ref{rc.Cros.1}), we use
 Theorem~{\ref{rc.ctn.20200325}} at the first step and
 use Theorem~\ref{Thm_rc_In}(\ref{Thm_rc_In.a}) 
  at the second step to get
\[
\eta = \rc (A) \leq  \rc \left(C^*(G, A, \alpha)\right).
\]
So it suffices to show that $\rc \left(C^*(G, A, \alpha)\right)\leq \eta$. 
Since $G$ is finite, there are $m \in \mathbb{Z}>0$
and positive integers $t(1)\leq t(2) \leq \cdots\leq t(m)$ 
such that 
$C^{*}(G) \cong \bigoplus_{j=1}^{m} M_{t(j)}$
 and 
$\sum_{ j=1}^{m} t(j)^{2} =\nu$. 
We may assume $t(1) = 1$.
Since $\alpha^{(n)}$ is inner for all $n \in \N$, it follows from Example~9.6.4 of \cite{GKPT18} that 
\begin{equation}\label{Eq1.20200330}
C^*\left(G, A_n, \alpha^{(n)}\right) \cong C^*(G) \otimes A_n  \cong \bigoplus_{j=1}^{m} M_{t(j)} ( A_n).
\end{equation}
Then, by Theorem~9.4.34 of \cite{GKPT18},
\begin{equation}\label{Eq1.20200427}
C^*(G, A, \alpha) 
= 
C^*\left(G, \dirlim A_n, \alpha\right)
\cong 
\dirlim C^*\left(G, A_n, \alpha^{(n)}\right).
\end{equation}
Using (\ref{Eq1.20200330}) at the first step, 
using Proposition~\ref{Prp6.2.Tom06}(\ref{Prp6.2.Tom06.a}) at the second  step, 
using $t(1)=1$ and Construction~\ref{Ctn}(\ref{Ctn_9}) at the third step,
using Proposition~\ref{Prp6.2.Tom06}(\ref{Prp6.2.Tom06.b}) at the fourth  step, and
using  Corollary~1.2 of \cite{EN13} at the fifth step,
 we get, for all $n \in \N$,
\begin{align}\label{Eq2.20200330}
\rc \left(C^*\left(G, A_n, \alpha^{(n)}\right)\right) 
&=
 \rc \left( \bigoplus_{j=1}^{m} M_{t(j)} \otimes A_n\right)
\\\notag
&= 
\max_{1\leq t(j)\leq m} \Big( \rc \left( M_{t(j)} \otimes A_n\right) \Big)
\\\notag
&=
 \rc \Big(C \left(X_{n}, M_{\nu r (n)}\right)\Big)
\\\notag
&= 
 \frac{1}{\nu r (n)}\rc \left(C \left(X_{n}\right)\right)
\\\notag
&\leq 
\frac{\dim X_n}{2 \nu r (n)}
 = 
 \frac{2 s(n)}{2 \nu r (n)}
 =
  \frac{s(n)}{\nu r (n)}.
\end{align}
Therefore, using (\ref{Eq1.20200427}) at the first step, using Proposition~2.11(ii) of \cite{AA20} at the second step, using (\ref{Eq2.20200330}) third step,
and using Lemma~\ref{L_Kap_20200325} at the last step,
\begin{align*}
\rc \big(C^*(G, A, \alpha)\big) 
&= 
\rc \left(\dirlim C^*(G, A_n, \alpha^{(n)})\right)
\\&
\leq
\liminf_{n \to \infty} \rc \left(C^*(G, A_n, \alpha^{(n)})\right)
\\&
=
\frac{1}{\nu} \lim_{ n \to \infty} \frac{s(n)}{r (n)} = \eta.
\end{align*}

Part (\ref{rc.Cros.2}) follows from 
Theorem~\ref{rc.ctn.20200325}, Proposition~\ref{L_af_proper}(\ref{L_af_proper.c}),
and Proposition~\ref{Pr.Tracial.In.Ro}.
\end{proof}
By Construction~\ref{Ctn}, Proposition~\ref{L_af_proper}, Theorem~\ref{rc.ctn.20200325} and Theorem~\ref{Thm_rc_cross}, 
we then get the following corollary.
\begin{cor}
For every finite group~$G$ and for every  $\eta \in \left(0, \frac{1}{\card (G)}\right)$, 
there exist a simple separable unital AH~algebra $A$ with  stable rank one 
and a strictly approximately  inner action $\alpha \colon G \to \Aut (A)$ such that:
\begin{enumerate}
\item\label{Cor.AH1}
$\af$ is pointwise outer, and $\alpha$ doesn't have the weak tracial Rokhlin property.
\item\label{Cor.AH2}
$\rc (A) =\rc \left(C^*(G, A, \alpha)\right)= \eta$.
\end{enumerate}
\end{cor}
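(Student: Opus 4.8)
The plan is to produce $A$ and $\af$ directly from Construction~\ref{Ctn} applied to the given data and then read off each claimed property from the results already proved. First I would fix a finite group $G$ and a real number $\eta \in \left(0, \tfrac{1}{\card(G)}\right)$, put $\nu = \card(G)$, and run Construction~\ref{Ctn} with this $G$ and this $\eta$. The only restriction imposed there on the parameter is $\eta \in \left(0, \tfrac{1}{\nu}\right)$ (see Construction~\ref{Ctn}(\ref{Ctn_2})), which is exactly our hypothesis, so the construction is legitimate and yields the C*-algebra $A = \dirlim\bigl(A_n, (\Gamma_{m,n})_{m \geq n}\bigr)$ with $A_n = C\bigl(X_n, M_{\nu r(n)}\bigr)$ and $X_n = (S^2)^{s(n)}$, together with the action $\af \colon G \to \Aut(A)$ of Construction~\ref{Ctn}(\ref{Ctn_12}).

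Next I would record the structural features of $A$. It is a unital separable AH~algebra because, by Construction~\ref{Ctn}(\ref{Ctn_9}) and Construction~\ref{Ctn}(\ref{Ctn_11}), it is the direct limit of the homogeneous C*-algebras $C\bigl(X_n, M_{\nu r(n)}\bigr)$ over the compact metric spaces $X_n = (S^2)^{s(n)}$ along unital injective connecting maps. Simplicity and stable rank one are part of Proposition~\ref{L_af_proper}(\ref{L_af_proper.a}). The action $\af$ with $\af = \dirlim \af^{(n)}$ exists by Proposition~\ref{L_af_proper}(\ref{L_af_proper.b}), and it is strictly approximately inner by Proposition~\ref{L_af_proper}(\ref{L_af_proper.c}).

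It then remains to collect the two numbered conclusions. For~(\ref{Cor.AH1}), pointwise outerness of $\af$ is Proposition~\ref{L_af_proper}(\ref{L_af_proper.c'}), and the failure of the weak tracial Rokhlin property is Theorem~\ref{Thm_rc_cross}(\ref{rc.Cros.2}). For~(\ref{Cor.AH2}), $\rc(A) = \eta$ is Theorem~\ref{rc.ctn.20200325}, while $\rc\bigl(C^*(G, A, \af)\bigr) = \eta$ is Theorem~\ref{Thm_rc_cross}(\ref{rc.Cros.1}); together these give $\rc(A) = \rc\bigl(C^*(G, A, \af)\bigr) = \eta$, which finishes the proof.

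Because every ingredient has already been established, there is no genuine mathematical obstacle; the proof is a matter of bookkeeping. The one point worth a remark is the verification that the output of Construction~\ref{Ctn} meets the standing hypotheses of the cited results — in particular that $A$ is infinite-dimensional (the $X_n$ are infinite and the matrix sizes $\nu r(n)$ are unbounded), stably finite, and not of type~I, the last being automatic from simplicity together with the existence of a tracially strictly approximately inner action — so that Theorem~\ref{rc.ctn.20200325} and Theorem~\ref{Thm_rc_cross} and the results of Sections~\ref{Sec_Approx_Innner} and~\ref{Sec_Tracial_Approx_Innner} feeding into them all genuinely apply.
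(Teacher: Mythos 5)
Your proposal is correct and is essentially the paper's own argument: the paper also obtains the corollary by applying Construction~\ref{Ctn} to the given $G$ and $\eta$ and then citing Proposition~\ref{L_af_proper}, Theorem~\ref{rc.ctn.20200325}, and Theorem~\ref{Thm_rc_cross} for simplicity, stable rank one, strict approximate innerness, pointwise outerness, failure of the weak tracial Rokhlin property, and the two radius-of-comparison computations. Your added check that the limit algebra satisfies the standing hypotheses (infinite-dimensional, stably finite, AH over $(S^2)^{s(n)}$) is a reasonable bit of diligence that the paper leaves implicit.
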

\section{Open problems}\label{Sec_Open_Problems}
\begin{qst}\label{Q_1_eq}
Let $G$ be a finite group,
let $A$ be
an infinite-dimensional stably finite simple unital C*-algebra,
and let $\alpha \colon G \to \Aut (A)$ be a tracially approximately inner action.
Does it follow that
\[
\rc (A) = \rc \big( \CGAa \big)?
\]

We hope that if $\alpha \colon G \to \Aut (A)$ is a strictly approximately inner action 
of a finite group $G$ on a simple unital C*-algebra, then
$\rc \big( \CGAa \big) \leq \rc (A)$.
We certainly do not know that this is true, and proving it seems very
difficult.
\end{qst}
\begin{qst}\label{Q_2_TrivOnTA}
Does there exist a  strictly approximately inner action of a nontrivial nonabelian finite group
 on a unital \ca{} which is not approximately representable?
\end{qst}
\begin{qst}\label{Q_3_TrivOnTA}
Does there exist a  tracially strictly approximately inner action of a nontrivial nonabelian finite group
 on an infinite-dimensional simple unital \ca{}  which is not strictly approximately inner and not tracially approximately representable?
\end{qst}
\begin{qst}\label{Q_3_TrivOnTA}
Let $\alpha$ be an action of a nontrivial finite group on a C*-algebra 
which is strictly approximately inner and strongly approximately inner.
Does it follow that $\alpha$ is approximately representable?
\end{qst}
%

\end{document}